\numberwithin{equation}{subsection}
\newcommand{\ra}{\rightarrow}
\newcommand{\lra}{\longrightarrow}
\newcommand{\p}{\prime}
\newcommand{\pt}{\partial}
\newcommand{\al}{\alpha}
\newcommand{\om}{\omega}
\newcommand{\lam}{\lambda}
\newcommand{\q}{\theta}
\newcommand{\dt}{\delta}
\newcommand{\Zbb}{\mathbb{Z}}
\newcommand{\Cbb}{\mathbb{C}}
\theoremstyle{plain} 
\newtheorem{THM}{Theorem}[section]
\newtheorem{DEF}[THM]{Definition}
\newtheorem{EX}[THM]{Example}
\newtheorem{QUE}[THM]{Question}
\newtheorem{PROP}[THM]{Proposition}
\newtheorem{LEM}[THM]{Lemma}
\newtheorem{COR}[THM]{Corollary}
\newtheorem{REM}[THM]{Remark}
\newcommand{\bt}{\bullet}
\newcommand{\Aut}{\mathrm{Aut}}
\newcommand{\Ac}{\mathcal{A}}
\newcommand{\Bc}{\mathcal{B}}
\newcommand{\img}{\mathrm{im}}
\newcommand{\Uc}{\mathcal{U}}
\newcommand{\Oc}{\mathcal{O}}
\newcommand{\Jc}{\mathcal{J}}
\newcommand{\Cc}{\mathcal{C}}
\newcommand{\Gc}{\mathcal{G}}
\newcommand{\Hc}{\mathcal{H}}
\newcommand{\Fc}{\mathcal{F}}
\newcommand{\Xfr}{\mathfrak{X}}
\newcommand{\Ufr}{\mathfrak{U}}
\newcommand{\Mfr}{\mathfrak{M}}
\newcommand{\Scl}{\mathcal{S}}
\newcommand{\Xc}{\mathcal{X}}
\newcommand{\Mcl}{\mathcal{M}}
\definecolor{airforceblue}{rgb}{0.36, 0.54, 0.66}
\definecolor{burgundy}{rgb}{0.5, 0.0, 0.13}
\definecolor{majorelleblue}{rgb}{0.38, 0.31, 0.86}
\definecolor{darkblue}{rgb}{0.0, 0.0, 0.55}
\title[Sh. of AQ Norm. Series and Smflds.]
{Sheaves of AQ Normal Series and Supermanifolds
\\}
\author{\small Kowshik Bettadapura}
\date{}
\begin{document}
\maketitle

\begin{abstract} 
On a group $G$, a filtration by normal subgroups is referred to as a normal series. If subsequent quotients are abelian, the filtration is referred to as an \emph{abelian-quotient normal series}, or `AQ normal series' for short. In this article we consider `sheaves of AQ normal series'. From a given AQ normal series satisfying an additional hypothesis we derive a complex whose first cohomology obstructs the resolution of an `integration problem'. These constructs are then applied to the classification of supermanifolds modelled on $(X, T^*_{X, -})$, where $X$ is a complex manifold and $T^*_{X, -}$ is a holomorphic vector bundle. We are lead to the notion of an `obstruction complex' associated to a model $(X, T^*_{X, -})$ whose cohomology is referred to as `obstruction cohomology'. We deduce a number of interesting consequences of a vanishing first obstruction cohomology. Among the more interesting consequences are its relation to projectability of supermanifolds and a `Batchelor-type' theorem: \emph{if the obstruction cohomology of a `good' model $(X, T^*_{X, -})$ vanishes, then any supermanifold modelled on $(X, T^*_{X, -})$ will be split}.
\\\\
\emph{Mathematics Subject Classification}. 32C11, 58A50
\\
\emph{Keywords}. Nonabelian sheaf cohomology, Complex supermanifolds.
\end{abstract}

\setcounter{tocdepth}{1}
\tableofcontents

\onehalfspacing

\section{Introduction}

\subsection{Motivation}
A pair $(X, T^*_{X, -})$ comprising a complex manifold $X$ and holomorphic vector bundle $T_{X, -}^*$ is referred to as a \emph{model}. It is the data required to define the notion of a `supermanifold'. Hence a supermanifold might be viewed as a structure associated to a model $(X, T^*_{X,-})$. In this article we are interesting in studying models $(X, T^*_{X, -})$ by reference to the supermanifolds modelled on them, in the hopes of gaining a greater understanding of supermanifolds in general.

\subsection{Sheaves of AQ Normal Series}
We begin from a general setting: `sheaves of AQ normal series' $\Hc$ on a topological space $X$. Associated to any $\Hc$ satisfying a centrality hypothesis we obtain a complex of vector spaces referred to as the \emph{linearisation complex} associated to $\Hc$. This construction allows for the formulation of a problem which we refer to as an `integration problem'. Loosely put, it concerns the problem of `integrating' elements in certain abelian sheaf cohomology groups to torsors which, for our purposes, are elements in degree $1$, non-abelian sheaf cohomology sets. This general setting is then applied to Green's classification of supermanifolds in \cite{GREEN}. We interpret Green's constructions in \cite{GREEN} as a sheaf of AQ normal series associated to any model $(X, T^*_{X, -})$. This allows us to then readily recover the main results in \cite{BETTOBSTHICK} as an application of our earlier, more general study. 

\subsection{Obstruction Cohomology}
As mentioned above, to any model $(X, T^*_{X, -})$ we obtain a sheaf of AQ normal series. It defines a complex of vector spaces whose cohomology is referred to as the \emph{obstruction cohomology} associated to $(X, T^*_{X, -})$. We are primarily interested in its degree $1$ component. In \cite{BETTOBSTHICK} a general classification of thickenings was proposed. Thickenings were classified as: \emph{supermanifolds, pseudo-supermanifolds or obstructed thickenings}. Then, almost by construction:
\begin{enumerate}[]
	\item if the $1$-obstruction cohomology of a model $(X, T^*_{X, -})$ vanishes, there will not exist any pseudo-supermanifolds modelled on $(X, T^*_{X, -})$.
\end{enumerate}
In this way we relate the obstruction cohomology of a model with the classification of thickenings from \cite{BETTOBSTHICK}. Indeed, with the vanishing of the obstruction cohomology we identify conditions under which the results in \cite{BETTOBSTHICK} are markedly improved (c.f., Theorem \ref{rfuihf3hf98h3f89h398fh3}). Therefore, determining conditions under which the obstruction cohomology of a model vanishes becomes a meaningful endeavour. We will see that when the model $(X, T^*_{X, -})$ is `good', in the sense of \cite{BETTHIGHOBS}, the vanishing of its obstruction cohomology is synonymous with the injectivity of a certain map (Theorem \ref{rfg7gfi3hfiuwhfioh}), leading then to a nice simplification when $X$ is a Riemann surface in Corollary \ref{fjcdkbvjdbvjhbvrbvbi}.

\subsection{Classification of Models}
In \cite{BETTHIGHOBS} the notion of a `good model' was introduced. We continue this classification-of-models by introducing notions of `projectablility' and `splitness' in analogy with supermanifolds. A model $(X, T^*_{X, -})$ is said to be \emph{projectable} resp., \emph{split} if every supermanifold modelled on $(X, T^*_{X, -})$ is projectable resp., split. Our ultimate applications in this article concerns conditions under which a model $(X, T^*_{X, -})$ will be projectable or split. Results of the latter kind are thought of as `Batchelor-type' theorems as they generalise the classical Batchelor's theorem in the smooth setting to the complex-analytic setting. We find:
\begin{enumerate}[$\bt$]
	\item (Theorem \ref{fvndvkbvubvknenvkno}) if the $1$-obstruction cohomology of a model vanishes, then the model is projectable;
	\item (Theorem \ref{fnvburbvuibuinenfneoifeoo}) if the $1$-obstruction cohomology of a \emph{good} model vanishes, then the model is split.
\end{enumerate}
Hence we see that the $1$-obstruction cohomology plays an integral role in the classification of models. As an application of Theorem \ref{fvndvkbvubvknenvkno} on the projectability of models and Donagi and Witten's result in \cite{DW1} on the non-projectability of the supermoduli space of curves, we deduce in Example \ref{dkvndbvjkbukvbuevevnkek} that the obstruction cohomology of the modelling data for supermoduli space cannot vanish. 
\\\\
En route to proving Theorem \ref{fvndvkbvubvknenvkno} and \ref{fnvburbvuibuinenfneoifeoo}, we will need a vanishing result mentioned by Donagi and Witten in \cite{DW1}. An exposition of these results are presented and, subsequently, we derive a stronger vanishing statement in Theorem \ref{dkcdnlnvdvbebvukenvl}. The results culminating in Theorem \ref{dkcdnlnvdvbebvukenvl} is referred to as the Vanish-Lift-Vanish principle and Appendix \ref{fklnvkbvjrbvnfkemlkmel} is devoted to a proof of this principle.  As a further consequence of this principle we obtain in Theorem \ref{rfh48gh4hg4040gj490j} a condition under which Berezin's lift, a brief exposition of which is given in Section \ref{fcbeuyvyegv8g79g93h93}, will be unique. This may serve as a result of independent interest.

\subsection{Outlook for Future Work}
This article is largely theoretical and illuminating examples are not so forthcoming. What ought to be clear however is that the obstruction cohomology of a model $(X, T^*_{X, -})$ carries important information pertaining to the classification of thickenings and supermanifolds. Following Berezin's notion of `splitness' (referred to as `simple' in \cite{BER}), Green in \cite{GREEN} provided a set-theoretic classification and Onishchik in \cite{ONISHCLASS} considered the moduli problem for this classification. Where the deformation theory is concerned, Vaintrob in \cite{VAIN} gave a general account, largely independent of the work by Green and Onishchik, i.e., in that `splitness', or lack thereof, was not explicitly considered. More recently Donagi and Witten in \cite{DW2} commented on the difficulties behind deforming non-split supermanifolds. With the material presented in this article we hope, in future works, to apply it to interesting deformation problems involving supermanifolds and, more generally, to variational problems involving superfields in physics (thought of as morphisms between supermanifolds).

\section{Sheaves of AQ-Normal Series}

\subsection{Definitions}
The terminology appearing in this section is largely standard and can be found, e.g., in texts such as \cite[p. 107]{HUNG}.

\subsubsection{Normal Series}
Let $G$ be a group and $H\leq G$ a subgroup. As sets, we can form the quotient $G/H$ which is the set of $H$-orbits. If $H$ is a normal subgroup, then the orbit set $G/H$ will admit a natural group structure. The condition of normality is generally not transitive. That is, if $H^\p\lhd H$ and $H\lhd G$, it need \emph{not} be the case that $H^\p\lhd G$. 

\begin{DEF}
\emph{Let $G$ be a group. A finite collection of subgroups $\Hc = (H_j)_{j = 0, \ldots,N}$ of $G$ satisfying:
\begin{enumerate}[(i)]
	\item $H_N = \{1\}$;
	\item $H_{j+1}\lhd H_j$ for all $0\leq j< N$ and;
	\item $H_0= G$
\end{enumerate}
is called a \emph{subnormal series for $G$} of \emph{length $N$}.}
\end{DEF}

\begin{DEF}
\emph{Let $G$ be a group. A subnormal series $\Hc = (H_j)$ for $G$ is said to be a \emph{normal} if each $H_j\in \Hc$ is a normal subgroup of $G$.}
\end{DEF}

\noindent
In what follows we will reference the normal series $\Hc =(H_j)_{j\geq 0}$ with $G = H_0$ understood. A very general result which we will make use of a number of times in this article is the following:

\begin{LEM}\label{fbciuebubennekwww}
Let $\Hc = \{H^\p, H, G\}$ be a normal series. Then there exists an isomorphism of groups
\[
H/H^\p \stackrel{\sim}{\lra} \ker\{ G/H^\p\ra G/H\}.
\]
\end{LEM}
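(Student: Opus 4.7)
The plan is to realize the desired isomorphism via the inclusion $H\hookrightarrow G$. First I would record that, since $\Hc$ is a \emph{normal} (not merely subnormal) series, both $H'$ and $H$ are normal in $G$, so $G/H'$ and $G/H$ are groups and the obvious projection
\[
\pi : G/H' \lra G/H, \qquad gH' \longmapsto gH
\]
is a well-defined group homomorphism (well-definedness uses only $H'\subseteq H$). An immediate unpacking of the definition gives
\[
\ker \pi = \{hH' : h\in H\},
\]
since $gH = H$ precisely when $g\in H$.

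Next I would produce a map in the other direction by composing the inclusion $H\hookrightarrow G$ with the quotient $G\to G/H'$. Its kernel is $H\cap H' = H'$, so by the first isomorphism theorem it factors as an injective homomorphism
\[
\iota : H/H' \lra G/H', \qquad hH' \longmapsto hH'.
\]
Here one needs $H'\lhd H$ for the source to make sense as a group, but this is inherited from $H'\lhd G$.

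Finally I would verify $\img\iota = \ker\pi$. One containment is immediate: for $h\in H$, $\pi(\iota(hH')) = hH = H$. The other follows from the explicit description of $\ker\pi$ above: any element is of the form $hH'$ with $h\in H$, and so lies in the image of $\iota$. Combined with the injectivity of $\iota$, this yields the isomorphism $H/H' \stackrel{\sim}{\lra} \ker\{G/H'\to G/H\}$.

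I do not anticipate any genuine obstacle; the statement is essentially a repackaging of the third isomorphism theorem, identifying the kernel of $G/H'\to G/H$ with $H/H'$. The only point worth flagging is that normality in the ambient group $G$ — rather than merely in the next larger term of the series — is what allows us to form the quotient groups $G/H'$ and $G/H$ in the first place, and this is exactly the strengthening of `subnormal' to `normal' series that the hypothesis supplies.
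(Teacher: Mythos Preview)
Your proof is correct and takes essentially the same approach as the paper: both arguments amount to applying the First Isomorphism Theorem to the natural map $H\to G/H'$, whose image is precisely $\ker\{G/H'\to G/H\}$ and whose kernel is $H'$. The paper packages this via a commutative diagram of exact rows, while you write out the maps explicitly, but the content is identical.
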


\begin{proof}
Since $\{H^\p, H, G\}$ is a normal series we know that $H^\p\lhd H$, $H\lhd G$ and $H^\p\lhd G$. Thus we can form the following commutative diagram with exact rows in the category of groups,
\begin{align}
\xymatrix{
\{e\}\ar[r] & \ar[d] H^\p \ar[r] & G\ar@{=}[d]\ar[r] & G/H^\p\ar[d]\ar[r] & \{e\}
\\
\{e\}\ar[r] & H \ar[r] & G\ar[r] & G/H \ar[r] & \{e\}.
}
\label{rhf784f7hf9830fj039j03}
\end{align}
The induced map $G/H^\p\ra G/H$ is surjective. Let $K$ be its kernel. By exactness of the rows in \eqref{rhf784f7hf9830fj039j03} we get a surjective homomorphism $H\ra K$ with kernel $H^\p$. Hence by the First Isomorphism Theorem for groups, $H/H^\p$ and $K$ are isomorphic.
\end{proof}

\noindent
To reiterate Lemma \ref{fbciuebubennekwww}: \emph{associated to any normal series $\{H^\p\lhd H\lhd G\}$ is a short exact sequence of groups}:
\begin{align}
\{1\}\lra H/H^\p\lra G/H^\p \lra G/H \lra\{1\}.
\label{ru4hf8hf09jf90j30}
\end{align}
In what follows we look at normal series whose subsequent quotients are Abelain groups.

\subsubsection{AQ Normal Series}

\begin{DEF}\label{fvjdiovnfvnjkvnkjrnkvj}
\emph{Let $G$ be a group and $H\lhd G$ be a normal subgroup. If the quotient $G/H$ is \emph{abelian}, then $H$ is said to be an \emph{abelian-quotient subgroup}. We will abbreviate by referring to $H\lhd G$ as an \emph{AQ subgroup}.}
\end{DEF}

\noindent
With Definition \ref{fvjdiovnfvnjkvnkjrnkvj} we can form the notion of an `AQ normal series'

\begin{DEF}
\emph{A normal series $\Hc = (H_j)_{j\geq0}$ is said to be an \emph{AQ normal series} if $H_{j+1}$ is an AQ subgroup of $H_j$ for each $j$.}
\end{DEF}

\noindent
Note that if $\Hc = (H_j)$ is a normal series, then $H_{j^\p}$ will be a normal subgroup of $H_j$ for any $j^\p\geq j$ and so we can form the quotient $H_j/H_{j^\p}$. This observation leads to the notion of an `AQ degree' of a normal series.

\begin{DEF}\label{rfg4fg87hf89h38f30}
\emph{Let $\Hc = (H_j)_{j = 0, \ldots, N}$ be a normal series. We say $\Hc$ has \emph{AQ degree $d$} if, for each $j$, the quotients  $H_j/H_{j+d}, H_j/H_{j+d-1}, \ldots, H_j/H_{j+1}$ are abelian.}
\end{DEF}

\begin{REM}
\emph{Any AQ normal series will have AQ degree $1$ and any normal series of AQ degree $d$ will define a series with AQ degree $d^\p$ for any $d^\p\leq d$. In particular, any normal series with AQ degree $d>1$ will be an AQ normal series.}
\end{REM}

\subsection{Sheaves of AQ Central Series}
Let $X$ be a topological space. We will consider on $X$ a sheaf of AQ normal series $\Hc = (\Hc_j)_{j\geq 0}$. This means, for each $j$:
\begin{enumerate}[$\bt$]
	\item $\Hc_j$ is a sheaf of groups;
	\item the inclusion $\Hc_{j+1}\subset \Hc_j$ realises $\Hc_{j+1}$ as a sheaf of normal subgroups of $\Hc_j$;
	\item the inclusion $\Hc_j\subset \Hc_0 =  \Gc$ realises $\Hc_j$ as a sheaf of normal subgroups of $\Gc$;
	\item the quotient $\Hc_j/\Hc_{j+1}$ is a sheaf of abelian groups for all $j$.
\end{enumerate}
The notion of AQ degree in Definition \ref{rfg4fg87hf89h38f30} adapts straightforwardly to sheaves of AQ series $\Hc =(\Hc_j)$ here. By \eqref{ru4hf8hf09jf90j30} there will be a short exact sequence of sheaves of groups for each $j> k$,
\begin{align}
\{e\}
\lra
\Hc_j/\Hc_{j+1}
\lra 
\Hc_{k}/\Hc_{j+1}
\lra 
\Hc_{k}/\Hc_j
\lra
\{e\}.
\label{rhf973hf93hf8jf0j30}
\end{align}
Since $\Hc$ is an AQ normal series the quotient $\Hc_j/\Hc_{j+1}$ is abelian. We will say $\Hc = (\Hc_j)$ is \emph{central} if, for each $j>0$, there exists some $k< j$ such that the sequence in \eqref{rhf973hf93hf8jf0j30} is central, i.e., that the inclusion $\Hc_j/\Hc_{j+1}$ is contained in the centre of $\Hc_{k}/\Hc_{j+1}$ for {some} $k$. 

\begin{DEF}\label{rfh894fh894hf0jf093j0}
\emph{A sheaf of AQ normal series which is central will be referred to as an \emph{AQ central series}.}
\end{DEF}

\noindent
We can now derive the following structure.

\begin{PROP}\label{rfbyuef8f9h39h3}
Let $X$ be a topological space and $\Hc =(\Hc_j)_{j\geq0}$ a sheaf of AQ central series on $X$ with successive quotients denoted $\Ac_j = \Hc_j/\Hc_{j+1}$. Then for each $j$ there exists a complex of pointed sets
\[
H^0\big(X, \Ac_j\big) \stackrel{\pt^1_{j+1}}{\lra} H^1\big(X, \Ac_{j+1}\big) \stackrel{\pt^2_{j+2}}{\lra} H^2\big(X, \Ac_{j+2}\big),
\]
i.e., that the composition $\pt^2_{j+1}\pt^1_{j+1}$ is trivial for each $j$.
\end{PROP}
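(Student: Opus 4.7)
The plan is to realise both connecting maps as arising from suitable short exact sequences of the form \eqref{rhf973hf93hf8jf0j30} and then to exhibit a common "refinement" through which the composition visibly factors trivially. First, one must pin down the two connecting maps. For $\pt^1_{j+1}$ I would use \eqref{rhf973hf93hf8jf0j30} with the pair $(k,j)$ replaced by $(j,j+1)$, namely the short exact sequence of sheaves of groups
\[
\{e\}\lra \Ac_{j+1} \lra \Hc_j/\Hc_{j+2} \lra \Ac_j \lra\{e\},
\]
whose standard nonabelian connecting map sends $H^0(X,\Ac_j)$ into $H^1(X,\Ac_{j+1})$. For $\pt^2_{j+2}$ I would similarly use
\[
\{e\}\lra \Ac_{j+2} \lra \Hc_{j+1}/\Hc_{j+3} \lra \Ac_{j+1} \lra\{e\},
\]
noting that although the centrality hypothesis in Definition \ref{rfh894fh894hf0jf093j0} is only asserted for \emph{some} $k<j+2$, centrality of $\Ac_{j+2}$ in $\Hc_{k}/\Hc_{j+3}$ passes, via the natural inclusion of quotients, to centrality in any subgroup $\Hc_{k'}/\Hc_{j+3}$ with $k\leq k'\leq j+1$. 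Hence the degree-raising connecting map $H^1(X,\Ac_{j+1})\to H^2(X,\Ac_{j+2})$ is defined.

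Next, the key auxiliary tool is the short exact sequence, again furnished by \eqref{rhf973hf93hf8jf0j30},
\[
\{e\}\lra \Hc_{j+1}/\Hc_{j+3} \lra \Hc_j/\Hc_{j+3} \lra \Ac_j \lra\{e\}.
\]
This yields its own nonabelian connecting map $\widetilde\pt: H^0(X,\Ac_j)\to H^1(X,\Hc_{j+1}/\Hc_{j+3})$. I would then fit this sequence into a commutative diagram of short exact sequences, with vertical maps given by the quotient projections $\Hc_{j+1}/\Hc_{j+3}\twoheadrightarrow \Ac_{j+1}$, $\Hc_j/\Hc_{j+3}\twoheadrightarrow \Hc_j/\Hc_{j+2}$, and the identity on $\Ac_j$. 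Functoriality of the connecting map in nonabelian cohomology (in the usual \v Cech cocycle sense) then says
\[
\pt^1_{j+1} \;=\; \bigl(H^1(X,\Hc_{j+1}/\Hc_{j+3})\to H^1(X,\Ac_{j+1})\bigr)\circ \widetilde\pt.
\]

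Finally, one invokes the exact sequence of pointed sets associated with the central extension $\{e\}\to \Ac_{j+2}\to \Hc_{j+1}/\Hc_{j+3}\to \Ac_{j+1}\to\{e\}$, which gives
\[
H^1\bigl(X,\Hc_{j+1}/\Hc_{j+3}\bigr) \lra H^1(X,\Ac_{j+1}) \stackrel{\pt^2_{j+2}}{\lra} H^2(X,\Ac_{j+2}),
\]
exact at the middle term. Combining this with the factorisation above, $\pt^2_{j+2}\circ \pt^1_{j+1}$ equals $\pt^2_{j+2}$ applied to classes in the image of $H^1(X,\Hc_{j+1}/\Hc_{j+3})\to H^1(X,\Ac_{j+1})$, which by exactness is killed. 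I expect the only real subtlety to be the verification of the functoriality of the nonabelian connecting map in the required generality, together with a careful transfer of the centrality hypothesis from the given index $k$ to the specific subquotient $\Hc_{j+1}/\Hc_{j+3}$ needed for the exact sequence at the heart of the argument; everything else is diagram-chasing on \v Cech cocycles.
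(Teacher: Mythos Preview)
Your argument is correct, but it takes a somewhat different route from the paper's. The paper does \emph{not} define $\pt^1_{j+1}$ and $\pt^2_{j+2}$ directly as boundary maps of the two-step extensions $\Hc_j/\Hc_{j+2}$ and $\Hc_{j+1}/\Hc_{j+3}$. Instead it works with the ambient quotients $\Hc_k/\Hc_{j+1}$, $\Hc_k/\Hc_{j+2}$ for the index $k$ supplied by the centrality hypothesis, and sets
\[
\pt^1_{j+1} \;=\; \dt^1_{j+1}\circ\iota^0_{j+1}
\quad\text{and}\quad
\pt^2_{j+2} \;=\; \dt^2_{j+2}\circ\iota^1_{j+2},
\]
where $\iota$ and $\dt$ are, respectively, the inclusion-induced map and the connecting map in the long exact sequence \eqref{fjnvfvjnvkjndkjvndk} associated to the central extension \eqref{rfh4hf4h9f83fj03}. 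The vanishing of the composite is then immediate: inside $\pt^2_{j+2}\circ\pt^1_{j+1}$ sits the segment $\iota^1_{j+2}\circ\dt^1_{j+1}$, which is two consecutive maps of a single long exact sequence and hence trivial.

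Your route is the one the paper only takes later, in Theorem~\ref{fbvuyrbvybriuvoievpie}, where it shows that under AQ degree~$2$ your direct boundary maps $\widetilde\pt$ agree with the $\pt$'s just defined. What you gain is concreteness: you never need the unspecified index $k$, and your maps are visibly linear whenever the two-step quotients are abelian. What the paper's definition gains is that it works verbatim for any AQ \emph{central} series without invoking the auxiliary three-step quotient $\Hc_j/\Hc_{j+3}$; the price is that linearity must then be established separately. Your transfer of centrality from $\Hc_k/\Hc_{j+3}$ down to the subgroup $\Hc_{j+1}/\Hc_{j+3}$ is correct and is the only point requiring care.
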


\begin{proof}
By definition of an AQ central series in Definition \ref{rfh894fh894hf0jf093j0} there will exist, for each $j$, some $k< j$ such that the following short exact sequence of sheaves is central,
\begin{align}
\{e\} \lra \Ac_{j} \lra \Hc_k/\Hc_{j+1} \lra \Hc_k/\Hc_{j} \lra \{e\}.
\label{rfh4hf4h9f83fj03}
\end{align}
Grothendieck in \cite{GROTHNONAB} observed that a short exact sequence of sheaves of groups will give rise to a long-exact sequence on \v Cech cohomology (in degrees zero and one). It extends to a map in degree two when  the sequence is central and so we can apply this observation to \eqref{rfh4hf4h9f83fj03}, yielding:
\begin{align}
\xymatrix{
\{1\}\ar[r] &H^0\big(X, \Ac_{j}\big) \ar[r]^{\iota^0_{j+1}} &  H^0\big(X, \Hc_k/\Hc_{j+1}\big)\ar[r] & H^0\big(X, \Hc_k/\Hc_{j}\big)\ar[d]^{\dt^1_{j}}
\\
&\ar[d]_{\dt^2_{j}} \mbox{\v H}^1\big(X, \Hc_k/\Hc_{j}\big) &\ar[l] \mbox{\v H}^1\big(X, \Hc_k/\Hc_{j+1}\big) &  \ar[l]_{\iota^1_{j+1}} H^1\big(X, \Ac_{j}\big)
\\
&H^2\big(X, \Ac_{j}\big)& & 
}
\label{fjnvfvjnvkjndkjvndk}
\end{align}
Thus we obtain the following maps comparing the cohomology of the abelian sheaves $\Ac_j$ for differing $j$,
\begin{align}
\xymatrix{
& H^0\big(X, \Hc_k/\Hc_{j+1}\big)\ar[dr]^{\dt^1_{j+1}} &
\\
H^0\big(X, \Ac_j\big) \ar[ur]^{\iota^0_{j+1}} \ar@{-->}[rr]_{\pt_{j+1}^1} & & H^1\big(X, \Ac_{j+1}\big)\ar@{-->}[dd]_{\pt^2_{j+2}} \ar[dr]^{\iota^1_{j+2}}
\\
& & &  \mbox{\v H}^1\big(X, \Hc_k/\Hc_{j+2}\big)\ar[dl]^{\dt^2_{j+2}}
\\
& & H^2\big(X, \Ac_{j+2}\big) &
}
\label{rf487fg873hfh89fh2j90j902j}
\end{align}
The composition $\pt^2_{j+2}\pt^1_{j+1}$ vanishes since the composition $\iota^1_{j+1}\dt^1_{j+1}$ is trivial (c.f., \eqref{fjnvfvjnvkjndkjvndk}). The proposition now follows.
\end{proof}

\subsubsection{As $\Zbb$-Graded abelian Sheaves}
We can reformulate Proposition \ref{rfbyuef8f9h39h3} in a more succinct manner as follows. To a sheaf of AQ central series $\Hc = (\Hc_j)$ on $X$ we can construct the $\Zbb$-graded, abelian sheaf $\Ac = \oplus_j \Ac_j$, where $\Ac_j = \Hc_j/\Hc_{j+1}$. 

\begin{DEF}\label{rburbuibuinenoienioen}
\emph{The sheaf $\Ac = \oplus_j \Ac_j$ is referred to as the \emph{linearisation of $\Hc$}.}
\end{DEF}

\noindent
Since cohomology commutes with direct sums, the $\Zbb$-grading on $\Ac$ gives a $\Zbb$-grading on the cohomology groups $H^n(X, \Ac)$. Proposition \ref{rfbyuef8f9h39h3} then says: \emph{there exists a complex $H^0\big(X, \Ac[-1]\big) \stackrel{\pt^1}{\ra} H^1\big(X, \Ac\big)\stackrel{\pt^2}{\ra} H^2\big(X, \Ac[1]\big)$ of set-theoretic maps with respect to the induced $\Zbb$-grading.} 

\begin{DEF}\label{g78f7f93hf83h803}
\emph{The complex $\big(H^0\big(X, \Ac[-1]\big) \stackrel{\pt^1}{\ra} H^1\big(X, \Ac\big)\stackrel{\pt^2}{\ra} H^2\big(X, \Ac[1]\big)\big)$ of pointed sets will be referred to as the \emph{primary complex of the AQ central series $\Hc$}. Its cohomology will be referred to as the \emph{primary cohomology} of $\Hc$ and will be denoted ${\bf H}^\bt_{\pt}(X, \Ac)$.}
\end{DEF}

\noindent
Any AQ normal series $\Hc$ with AQ degree $d> 1$ will be central. Hence Proposition \ref{rfbyuef8f9h39h3} will apply to any such a series $\Hc$. Now, a point to emphasise presently is that the primary complex of $\Hc$ in Definition \ref{g78f7f93hf83h803} is a complex of pointed sets. In the section to follow we will deduce linearity.

\section{Linearity}
\label{rhuybiunioeiofee}

\subsection{The Long Exact Sequence}
If $0\ra \Ac\ra \Bc\ra \Cc\ra0$ is a short exact sequence of abelian sheaves on $X$, then there exists a long exact sequence of complex vector spaces, a piece of which in degree $n$ is:
\begin{align}
\cdots\ra H^n(X, \Ac) \ra H^n(X, \Bc) \ra H^n(X, \Cc) \stackrel{\dt}{\ra} H^{n+1}(X, \Ac)\ra\cdots
\label{rfh89hf98hf83jf03jf}
\end{align}
Importantly the maps involved in \eqref{rfh89hf98hf83jf03jf}, including the boundary map $\dt$, are linear maps between vector spaces. Now if we are given a mapping $\pt: H^n(X, \Cc) \ra H^{n+1}(X, \Ac)$, defined at the level of sets, we can show $\pt$ is linear if we can find an extension $\Bc$ of $\Cc$ by $\Ac$ such that $\pt$ is the boundary map $\dt$ induced on cohomology in \eqref{rfh89hf98hf83jf03jf}. This is the strategy we employ to deduce linearity of the maps in the primary complex of a sheaf of AQ central series.

\subsection{Linearity of the Primary Complex}
In this section we will be concerned with the proof of the following.

\begin{THM}\label{fbvuyrbvybriuvoievpie}
Let $X$ be a topological space and $\Hc = (\Hc_j)$ a sheaf of AQ normal series with degree $2$. Then the primary  complex of $\Hc$ will be a complex of vector spaces.
\end{THM}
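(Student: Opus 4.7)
The plan is to apply the strategy from the start of Section \ref{rhuybiunioeiofee}: realize each set-theoretic map in the primary complex as the connecting homomorphism of a long exact sequence associated to a short exact sequence of \emph{abelian} sheaves. Linearity of $\pt^1$ and $\pt^2$ will then be automatic.

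The AQ degree $2$ hypothesis is precisely the statement that $\Hc_j/\Hc_{j+2}$ is an abelian sheaf for every $j$. Consequently, for every $j$ the short exact sequence
\[
\Ec_j: \quad \{e\}\lra \Ac_{j+1}\lra \Hc_j/\Hc_{j+2}\lra \Ac_j\lra \{e\}
\]
is a short exact sequence of \emph{abelian} sheaves. I will denote its linear cohomological connecting homomorphisms by $b_j^n:H^{n-1}(X,\Ac_j)\to H^n(X,\Ac_{j+1})$ for $n\geq1$.

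The core of the argument is to identify $\pt^1_{j+1}=b^1_j$ and $\pt^2_{j+2}=b^2_{j+1}$. Recall from the proof of Proposition \ref{rfbyuef8f9h39h3} that $\pt^1_{j+1}$ is built, via the composition $\dt^1_{j+1}\circ \iota^0_{j+1}$ of diagram \eqref{rf487fg873hfh89fh2j90j902j}, from the central extension
\[
\{e\}\lra \Ac_{j+1}\lra \Hc_k/\Hc_{j+2}\lra \Hc_k/\Hc_{j+1}\lra\{e\}
\]
for some $k<j+1$. Under AQ degree $2$ the choice $k=j$ is admissible: the middle term $\Hc_j/\Hc_{j+2}$ is abelian (so the sequence is trivially central), the whole extension collapses to $\Ec_j$, and the inclusion $\iota^0_{j+1}$ degenerates to the identity on $H^0(X,\Ac_j)$. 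Hence $\pt^1_{j+1}$ equals the abelian boundary $b^1_j$ and is linear. The symmetric choice $k=j+1$ identifies $\pt^2_{j+2}$ with $b^2_{j+1}$ arising from $\Ec_{j+1}$.

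The point that will require care is confirming that these substitutions $k=j$ and $k=j+1$ genuinely compute the primary complex, i.e., that the construction of Proposition \ref{rfbyuef8f9h39h3} is independent of the $k$ used in the definition of AQ-centrality. This comes down to functoriality of the non-abelian connecting map in degrees $0\to 1$ and of Grothendieck's degree $1\to 2$ boundary for central extensions, applied to the natural morphism of short exact sequences $\Ec_j \to \{\Ac_{j+1}\to \Hc_{k^\p}/\Hc_{j+2}\to \Hc_{k^\p}/\Hc_{j+1}\}$ induced by $\Hc_j\subseteq \Hc_{k^\p}$ for any other admissible $k^\p<j$. Such functoriality is expected to be a routine cocycle verification, but is the main point that must be carried out explicitly; once established, linearity of the entire primary complex follows.
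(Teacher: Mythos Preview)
Your approach is correct and is essentially the paper's own argument. The paper likewise introduces the abelian short exact sequence $\Ec_j:\{e\}\to\Ac_{j+1}\to\Hc_j/\Hc_{j+2}\to\Ac_j\to\{e\}$, calls its linear boundary $\widetilde\pt$, and then verifies $\widetilde\pt=\pt$ by writing down the morphism of extensions $\Ec_j\to\{\Ac_{j+1}\to\Hc_k/\Hc_{j+2}\to\Hc_k/\Hc_{j+1}\}$ induced by $\Hc_j\subset\Hc_k$ (this is exactly the diagram \eqref{jkdkjvbdvbbevkenle}) and invoking functoriality of the connecting maps; the ``routine cocycle verification'' you flag is precisely this step, and it is indeed routine. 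The only cosmetic difference is framing: you phrase it as ``choose $k=j$ and then check independence of $k$,'' whereas the paper keeps the original $k$ fixed and compares directly --- but the underlying commutative square is identical in both.
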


\begin{proof}
From Definition \ref{rfg4fg87hf89h38f30}, if $\Hc = (\Hc_j)$ has AQ-degree $2$ then for each $j$ the quotient
$\Hc_{j}/\Hc_{j+2}$ will be abelian. Adapting \eqref{ru4hf8hf09jf90j30} we have, for each $j$, an exact sequence of sheaves of abelian groups
\begin{align}
0\lra \Ac_{j+1} \lra \Hc_{j}/\Hc_{j+2} \lra \Ac_{j}\lra0
\label{rfh479hf93hf98j03020}
\end{align}
where $\Ac_j = \Hc_j/\Hc_{j+1}$. The sequence in \eqref{rfh479hf93hf98j03020} induces the following linear maps, which are the boundary maps in the long exact sequence on sheaf cohomology,
\begin{align}
H^n(X, \Ac_{j}) \stackrel{\widetilde\pt^{n+1}_{j+1}}{\lra} H^{n+1}(X, \Ac_{j+1}).
\label{rfh784fg873hf9hf389}
\end{align}
Thus there are linear maps,
\begin{align}
0\lra H^0(X, \Ac_{j}) \stackrel{\widetilde\pt^1_{j+1}}{\lra} H^1(X, \Ac_{j+1}) \stackrel{\widetilde\pt^2_{j+2}}{\lra} H^2(X, \Ac_{j+2}) \lra\cdots
\label{rhf894h89h9fj309303}
\end{align}
We now claim the following:
\begin{align}
\widetilde\pt^1_{j+1} = \pt^1_{j+1}
&&
\mbox{and}
&&
\widetilde\pt^1_{j+2} = \pt^1_{j+2}
\label{eh3hf983hf98h3}
\end{align}
where $\pt^1_{j+1}$ and $\pt^2_{j+2}$ are the maps in Proposition \ref{rfbyuef8f9h39h3}. To confirm \eqref{eh3hf983hf98h3} consider the following commutative diagram of sheaves with exact rows and columns,
\begin{align}
\xymatrix{
\Ac_{j+1}\ar@{=}[d] \ar[r] & \Hc_j/\Hc_{j+2} \ar[d] \ar[r] & \Ac_j\ar[d]
\\
\Ac_{j+1}\ar[r] & \Hc_k/\Hc_{j+2} \ar[d] \ar[r] & \Hc_k/\Hc_{j+1}\ar[d]
\\
&\Hc_k/\Hc_j\ar@{=}[r] &  \Hc_k/\Hc_j.
}
\label{jkdkjvbdvbbevkenle}
\end{align}
Evidently we obtain a commutative diagram on cohomology in degrees zero and one:
\begin{align*}
\xymatrix{
\ar[d]_{\iota^0_{j+1}} H^0\big(X, \Ac_j\big) \ar[r]^{\widetilde\pt^1_{j+1}} & H^1\big(X, \Ac_{j+1}\big)\ar@{=}[d]
\\
H^0\big(X, \Hc_k/\Hc_{j+1}\big) \ar[r]^{\dt^1_{j+1}} &H^1\big(X, \Ac_{j+1}\big)
}
&&
\mbox{and}
&&
\xymatrix{
\ar[d]_{\iota^2_{j+2}} H^1\big(X, \Ac_{j+1}\big) \ar[r]^{\widetilde\pt^2_{j+2}} & H^2\big(X, \Ac_{j+2}\big)\ar@{=}[d]
\\
\mbox{\v H}^1\big(X, \Hc_k/\Hc_{j+2}\big) \ar[r]^{\dt^2_{j+2}} &H^2\big(X, \Ac_{j+2}\big)
}
\end{align*}
Comparing with \eqref{rf487fg873hfh89fh2j90j902j} in Proposition \ref{rfbyuef8f9h39h3}, we see that commutativity of the above diagrams are precisely the equalities in \eqref{eh3hf983hf98h3}. The present theorem now follows.
\end{proof}

\noindent
Our applications in this article will ultimately rest on the interpretation of the \emph{first} primary cohomology ${\bf H}_\pt^1(X, \Ac)$ of a sheaf of AQ central series $\Hc$ and its relation to what we will term the `integration problem'. We conclude this section now with the observation that it is possible to extend the primary complex to obtain a larger complex of vector spaces.

\subsection{An Extension of the Primary Complex}
With the boundary maps in \eqref{rfh784fg873hf9hf389} the sequence in \eqref{rhf894h89h9fj309303} of course continues into the ellipses. In Theorem \ref{fbvuyrbvybriuvoievpie} the assumption that the AQ series have degree $2$ ensured only that the primary complex will be a complex of vector spaces. In what follows, the assumption on the AQ degree will be crucial in showing that the sequence in \eqref{rhf894h89h9fj309303} will also define a complex of vector spaces.

\begin{THM}\label{fvnrbvurnvnvkmvkel}
Let $X$ be a topological space and suppose $\Hc$ is an AQ series with degree $4$. Then the primary  complex of $\Hc$ extends to give a bounded linear complex $\big(\mathcal C^\bt(X, \Ac), \pt\big)$ which, in degree $n$, is:\footnote{for a $\Zbb$-graded module $\Fc = \oplus_j \Fc_j$, the shift $\Fc[\ell]$ is the module $\Fc$ with the grading: $(\Fc[\ell])_j = \Fc_{\ell+j}$.}
\begin{align*}
\Cc^n(X, \Ac)
= H^n(X, \Ac)
&&
\mbox{and}
&&
\pt : H^n\big(X, \Ac\big)\lra H^{n+1}\big(X, \Ac[1]\big).
\end{align*}
\end{THM}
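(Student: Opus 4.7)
The plan is to proceed in direct analogy with Theorem \ref{fbvuyrbvybriuvoievpie}, using the short exact sequences of abelian sheaves supplied by the AQ-degree-$4$ hypothesis to define linear connecting maps in every cohomological degree, and then verifying $\pt\circ\pt=0$ by a naturality diagram-chase. First I would construct $\pt$ in each degree. Since AQ degree $4$ implies in particular that $\Hc_j/\Hc_{j+2}$ is an abelian sheaf for every $j$, the short exact sequence $0 \to \Ac_{j+1} \to \Hc_j/\Hc_{j+2} \to \Ac_j \to 0$ of \eqref{rfh479hf93hf98j03020} furnishes a full long exact sequence of cohomology vector spaces. The connecting homomorphisms $\widetilde\pt^{n+1}_{j+1}: H^n(X, \Ac_j) \to H^{n+1}(X, \Ac_{j+1})$ of \eqref{rfh784fg873hf9hf389} are therefore available in every degree $n$, not just in the low-degree range covered by Theorem \ref{fbvuyrbvybriuvoievpie}. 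Taking the direct sum over $j$ produces the linear map $\pt : H^n(X, \Ac) \to H^{n+1}(X, \Ac[1])$ respecting the $\Zbb$-grading with a shift of $+1$.

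The substantive step is the verification that $\pt \circ \pt = 0$, equivalently $\widetilde\pt^{n+2}_{j+2} \circ \widetilde\pt^{n+1}_{j+1} = 0$ for all $j,n$. This is where the full AQ-degree-$4$ assumption enters: it ensures that $\Hc_j/\Hc_{j+3}$ is also an abelian sheaf, so one can assemble, in close analogy with \eqref{jkdkjvbdvbbevkenle}, the commutative diagram of short exact sequences of abelian sheaves
\begin{equation*}
\xymatrix@C=1.2em@R=1.2em{
0 \ar[r] & \Ac_{j+2} \ar@{=}[d] \ar[r] & \Hc_{j+1}/\Hc_{j+3} \ar[d] \ar[r] & \Ac_{j+1} \ar[d] \ar[r] & 0 \\
0 \ar[r] & \Ac_{j+2} \ar[r] & \Hc_j/\Hc_{j+3} \ar[d] \ar[r] & \Hc_j/\Hc_{j+2} \ar[d] \ar[r] & 0 \\
& & \Ac_j \ar@{=}[r] & \Ac_j &
}
\end{equation*}
Let $\dt : H^n(X, \Ac_j) \to H^{n+1}(X, \Hc_{j+1}/\Hc_{j+3})$ denote the connecting map attached to the middle column $0 \to \Hc_{j+1}/\Hc_{j+3} \to \Hc_j/\Hc_{j+3} \to \Ac_j \to 0$. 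By naturality of the connecting homomorphism applied to the morphism of short exact sequences formed by the right half of the diagram, $\widetilde\pt^{n+1}_{j+1}$ factors as $\bigl(\Hc_{j+1}/\Hc_{j+3} \to \Ac_{j+1}\bigr)_* \circ \dt$. Consequently,
\begin{equation*}
\widetilde\pt^{n+2}_{j+2} \circ \widetilde\pt^{n+1}_{j+1} \;=\; \widetilde\pt^{n+2}_{j+2} \circ \bigl(\Hc_{j+1}/\Hc_{j+3} \to \Ac_{j+1}\bigr)_* \circ \dt \;=\; 0,
\end{equation*}
since the first two factors are consecutive arrows in the long exact sequence attached to the top row $0 \to \Ac_{j+2} \to \Hc_{j+1}/\Hc_{j+3} \to \Ac_{j+1} \to 0$.

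The main obstacle I anticipate is precisely the identification of $\widetilde\pt^{n+1}_{j+1}$ with the composite route through $\Hc_{j+1}/\Hc_{j+3}$; this is a snake-lemma style diagram chase directly parallel to the verification of the equalities \eqref{eh3hf983hf98h3} in Theorem \ref{fbvuyrbvybriuvoievpie}, but it must be carried out carefully against the larger diagram above. Boundedness of the resulting complex is automatic whenever the AQ series has finite length, since then $\Ac_j = 0$ for $j$ large; and in the low cohomological degrees the complex manifestly restricts to the primary complex of Definition \ref{g78f7f93hf83h803}, which is what is meant by the assertion that it \emph{extends} that complex.
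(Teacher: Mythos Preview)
Your argument is correct and follows the same strategy as the paper: factor $\widetilde\pt^{n+1}_{j+1}$ through an intermediate abelian quotient via naturality of the connecting homomorphism, then observe that the resulting composite contains two consecutive maps of a long exact sequence. The paper's diagram anchors at $\Hc_{j-1}$ (using $\Hc_{j-1}/\Hc_{j+3}$, hence genuinely AQ degree $4$), whereas your diagram anchors at $\Hc_j$ and only uses $\Hc_j/\Hc_{j+3}$; your version therefore actually requires only AQ degree $3$, so your sentence ``this is where the full AQ-degree-$4$ assumption enters'' slightly overstates what you are using.

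One small slip: your boundedness remark appeals to finite length of the series, which bounds the $j$-grading on $\Ac$, not the cohomological degree $n$ of $\Cc^\bullet$. The paper instead invokes vanishing of sheaf cohomology in negative degrees and in degrees above $\dim X$, which is what gives boundedness of $(\Cc^\bullet,\pt)$ as a complex in $n$.
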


\begin{proof}
The construction of the data in the alleged complex was given in the proof of Theorem \ref{fbvuyrbvybriuvoievpie}. It remains to show that the composition 
\[
H^n(X, \Ac_{j-1}) 
\stackrel{\pt^{n+1}_{j}}{\lra} 
H^{n+1}(X, \Ac_j)
\stackrel{\pt^{n+2}_{j+1}}{\lra}
H^{n+1}(X, \Ac_{j+1})
\]
will vanish. Here the assumption on the AQ degree will be essential. If $\Hc = (\Hc_j)$ is an AQ normal series with degree $4$, then the following quotients will be abelian:
\begin{align*}
\Hc_{j-1}/\Hc_{j+3}; \Hc_{j-1}/\Hc_{j+2}; \Hc_{j-1}/\Hc_{j+1}~\mbox{and}~\Hc_{j-1}/\Hc_{j}.
\end{align*}
Hence we can form their cohomology in any degree. Now observe that we have the following diagram, 
\begin{align*}
\xymatrix{
 & & \Ac_{j+1}\ar[d]
\\
 \Ac_{j+2}\ar[r] & \Hc_{j-1}/\Hc_{j+3}\ar[r]  &\Hc_{j-1}/\Hc_{j+2}\ar[d] &
\\ 
& \Ac_j \ar[r] & \Hc_{j-1}/\Hc_{j+1}\ar[r] & \Gc/\Hc_j
}
\end{align*}
On cohomology we therefore have:
\begin{align}
\xymatrix{
H^n\big(X, \Ac_j\big) \ar@{-->}[dr]_{\pt^{n+1}_{j+1}} \ar[r] & H^n\big(X, \Hc_{j-1}/\Hc_{j+1}\big)\ar[d]
\\
& H^{n+1}\big(X, \Ac_{j+1}\big) \ar[d] \ar@{-->}[dr]^{\pt^{n+2}_{j+2}} &
\\
&H^{n+1}\big(X, \Hc_{j-1}/\Hc_{j+2}\big) \ar[r] & H^{n+2}\big(X, \Ac_{j+2}\big)
}
\label{rfh784gf784hf893j903}
\end{align}
Since middle column in \eqref{rfh784gf784hf893j903} is exact it follows, by commutativity, that $\pt^{n+2}_{j+2}\pt^{n+1}_{j+1} = 0$. We therefore have a linear complex, as claimed. It is bounded since the terms in this complex, being cohomology groups of abelian sheaves on $X$, are trivial in negative degrees and degrees higher than the dimension of $X$.
\end{proof}

\subsection{Linearity on Global Sections}
In previous sections (see Theorem \ref{fbvuyrbvybriuvoievpie} and \ref{fvnrbvurnvnvkmvkel}) we deduced linearity of the boundary maps in the primary and extended complex of a sheaf of AQ series $\Hc$ by making an assumption on its degree. We note here however that linearity of the map on global sections will be immediate and independent on any further assumptions on the AQ degree. This is based on the following general result.

\begin{LEM}\label{rfggf784gfhf983hf89h393}
Let $\{e\}\ra \Ac\ra \Gc \ra \Cc\ra \{e\}$ be a short exact sequence of sheaves of groups on a topological space $X$ and suppose $\Ac$ and $\Cc$ are abelian. Then the induced map on cohomology 
\[
H^0\big(X, \Cc\big)\stackrel{\dt}{\lra} H^1\big(X, \Ac\big)
\]
will be linear.
\end{LEM}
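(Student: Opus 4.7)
The plan is a direct \v Cech cocycle computation, exploiting that abelianness of $\Ac$ makes $H^1(X,\Ac)$ into an honest abelian group. First I would fix an open cover $\Uc = \{U_i\}$ of $X$ fine enough that every global section $c \in H^0(X, \Cc)$ admits local lifts $g_i \in \Gc(U_i)$. By exactness, the cochain $a_{ij} := g_i g_j^{-1}$ then takes values in $\Ac(U_{ij})$, and the cocycle identity $a_{ij} a_{jk} = a_{ik}$ follows by telescoping in $\Gc$; its class in $H^1(X, \Ac)$ is, by definition of the connecting map, $\delta(c)$.

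For $c_1, c_2 \in H^0(X, \Cc)$ with local lifts $g_i^{(1)}, g_i^{(2)} \in \Gc(U_i)$, abelianness of $\Cc$ ensures that the pointwise product $g_i^{(1)} g_i^{(2)}$ is itself a lift of $c_1 c_2$, so $\delta(c_1 c_2)$ is represented by $b_{ij} = g_i^{(1)} g_i^{(2)} (g_j^{(2)})^{-1} (g_j^{(1)})^{-1}$, while $\delta(c_1) + \delta(c_2)$ is represented by $a_{ij}^{(1)} a_{ij}^{(2)}$. I would compute the ratio $b_{ij} \cdot (a_{ij}^{(1)} a_{ij}^{(2)})^{-1}$ inside the abelian group $\Ac(U_{ij})$: substituting $g_i^{(1)} = a_{ij}^{(1)} g_j^{(1)}$ and $g_i^{(2)} = a_{ij}^{(2)} g_j^{(2)}$ and cancelling, the ratio collapses to the commutator $[g_j^{(1)}, a_{ij}^{(2)}] := g_j^{(1)} a_{ij}^{(2)} (g_j^{(1)})^{-1} \cdot (a_{ij}^{(2)})^{-1} \in \Ac(U_{ij})$. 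Because $\Ac$ is abelian and hence acts trivially on itself by conjugation, this commutator depends only on the image $c_1 \in \Cc$ of $g_j^{(1)}$, recording the induced $\Cc$-action on $\Ac$.

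The main obstacle is precisely to show that this commutator discrepancy is a \v Cech coboundary, i.e.\ represents the trivial class in $H^1(X, \Ac)$. Unwinding, the discrepancy class is $(c_1 - 1) \cdot \delta(c_2)$ for the induced $\Cc$-action on $H^1(X, \Ac)$, so linearity of $\delta$ is equivalent to triviality of this action. In the applications of this paper the lemma is invoked for sequences coming from an AQ central series, for which the extension $\Ac \to \Gc \to \Cc$ is central and the $\Cc$-action on $\Ac$ is trivial; under that hypothesis the commutator vanishes already at the cochain level, giving $b_{ij} = a_{ij}^{(1)} a_{ij}^{(2)}$ and hence $\delta(c_1 c_2) = \delta(c_1) + \delta(c_2)$ on the nose. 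Handling the fully general non-central case would require either reinterpreting $H^1(X, \Ac)$ equivariantly or exhibiting $(c_1 - 1) \cdot \delta(c_2)$ as a coboundary by other means, and I would expect this to be the most delicate part of a completely general proof.
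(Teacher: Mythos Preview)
Your direct \v Cech computation is correct and cleanly isolates the crux: the discrepancy between $\dt(c_1c_2)$ and $\dt(c_1)+\dt(c_2)$ is the commutator term $(c_1-1)\cdot\dt(c_2)$ coming from the conjugation action of $\Cc$ on $\Ac$, and this vanishes at the cochain level when the extension is central. That is precisely the case relevant to the paper's applications (Corollary~\ref{nkjbvkebubeufifif3ofno3} and all uses downstream), since the sequences there come from an AQ central series and, in the supermanifold setting, from quotients $\Hc_j/\Hc_{j+2}$ that are outright abelian by Proposition~\ref{cnjbvkjbrvkjneke}.

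The paper argues along a different route: it invokes Grothendieck's action $\star$ of $H^0(X,\Cc)$ on $\mbox{\v H}^1(X,\Ac)$, defined on cocycles by $(c\star a)_{ij}=g_ia_{ij}g_j^{-1}$, observes $\dt(c)=c\star\{e\}$ and hence $\dt(cc')=c\star\dt(c')$, and from this concludes that $\dt$ is a homomorphism. But if one unpacks $c\star a$ in the abelian group $H^1(X,\Ac)$ one finds $c\star a=(c\cdot a)+\dt(c)$, where $c\cdot a$ is the very $\Cc$-action you isolated; so the passage from $\dt(cc')=c\star\dt(c')$ to $\dt(cc')=\dt(c)+\dt(c')$ again requires $(c-1)\cdot\dt(c')=0$. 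Both routes therefore land on the same obstruction, and the paper's proof does not supply a separate argument eliminating it in the fully general non-central case either. Your flagging of this point is well taken; under the centrality hypothesis that is in force throughout the paper, your argument is complete and arguably more transparent than the $\star$-action formulation.
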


\begin{proof}
The map $\dt$ is constructed as follows. Let $U = (U_i)\ra X$ be an open covering of $X$ and $c\in H^0(X, \Cc)$. Then $c$ defines local sections $c_i = c|_{U_i}$. Since $\{e\}\ra \Ac\ra \Gc \ra \Cc\ra \{e\}$ is a short exact sequence of sheaves we will have a cochain $g = (g_i)\in C^0(U, \Gc)$ where $g_i\in \Gc(U_i)$ and such that $g_i \mapsto c_i$ for all $i$. Observe that $(g_ig_j^{-1})_{ij}$ will define a 1-cocycle valued in $\Cc$. If $[(g_ig_j^{-1})_{ij}]$ denotes its cohomology class we can define,
\begin{align}
\dt : c \longmapsto \big[(g_ig_j^{-1})_{ij}\big].
\label{fnciubiugh8g4hg84}
\end{align}
This mapping depends on the cochain $g= (g_i)$ up to boundary terms and so \eqref{fnciubiugh8g4hg84} gives a well defined mapping $H^0(X, \Cc) \ra H^1(X, \Ac)$. Grothendieck in \cite{GROTHNONAB} observed that there exists a group action $H^0(X, \Cc)\times H^1(X, \Ac) \stackrel{\star}{\ra} H^1(X, \Ac)$,
\begin{align*}
(c, a) \longmapsto c\star a&&\mbox{given on cocycles by}&&
(c\star a)_{ij} = g_ia_{ij}g_j^{-1}.
\end{align*}
where $g_i\mapsto c_i = c|_{U_i}$. Evidently, by construction of the boundary map $\dt$ in \eqref{fnciubiugh8g4hg84} we find: 
\begin{align}
\dt(c) = c\star \{e\}, 
\label{dkldmnvldnbjkbbvyve}
\end{align}
for $\{e\}$ the base-point in $H^1(X, \Ac)$. With this formulation it is clear that
\begin{align}
\dt(c\cdot c^\p) = c\star \big(c^\p\star \{e\}\big)
\label{nfkdnkvjndkjvndkjndk}
\end{align}
where $c\cdot c^\p$ above is formed with respect to the natural group structure on $H^0(X, \Cc)$. In using that $\dt$ preserves the base-point we have $\dt(\{e\}) = \{e\}$. And hence from \eqref{dkldmnvldnbjkbbvyve} that $\dt(\{e\}) = \{e\} = \{e\}\star \{e\}$. With this identity we can therefore conclude from \eqref{nfkdnkvjndkjvndkjndk} that $\dt(cc^\p) = \dt(c)\star\dt(c^\p)$. Hence the subset $\dt(H^0(X, \Cc))\subset \mbox{\v H}^1(X, \Ac)$ admits a group structure with $\dt$ a homomorphism onto its image. If $\Ac$ is abelian then $\mbox{\v H}^1(X, \Ac) \cong H^1(X, \Ac)$ is an abelian group and therefore, with this isomorphism we see that $\dt : H^0(X, \Cc)\ra H^1(X, \Ac)$ will be a homomorphism of groups.  As a result when $\Cc$ and $\Ac$ are abelian, these groups are vector spaces and $\dt$ is a linear map.
\end{proof}

\noindent
The desired corollary of Lemma \ref{rfggf784gfhf983hf89h393}, mentioned in the remarks leading up to this lemma, is now  the following.

\begin{COR}\label{nkjbvkebubeufifif3ofno3}
Let $\Hc = (\Hc_j)_{j\geq0}$ be a sheaf of AQ central series on a space $X$ with linearisation $\Ac$. Then the differential $\pt : H^0(X, \Ac)\ra H^1(X, \Ac[1])$ in the primary complex of $\Hc$ is linear. It is given by
\[
\q^\p \stackrel{\pt^1_{j+1}}{\longmapsto} \q^\p\star \{e\}
\]
for $\q^\p\in H^0\big(X, \Ac_j\big)$.
\end{COR}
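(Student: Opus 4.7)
The plan is to reduce the corollary directly to Lemma \ref{rfggf784gfhf983hf89h393}. The crucial observation is that, irrespective of the AQ degree of $\Hc$, one always has a short exact sequence of sheaves of groups whose two ends are the abelian sheaves $\Ac_j$ and $\Ac_{j+1}$; the middle term need not be abelian, and the lemma demands abelianness only at the ends.

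Concretely, for each $j \geq 0$ I would introduce the short exact sequence
\[
\{e\} \lra \Ac_{j+1} \lra \Hc_j/\Hc_{j+2} \lra \Ac_j \lra \{e\},
\]
which is well-defined because $\Hc_{j+2} \lhd \Hc_j$ (as $\Hc$ is a normal series) and $\Ac_{j+1} = \Hc_{j+1}/\Hc_{j+2}$ is the kernel of the natural projection $\Hc_j/\Hc_{j+2} \lra \Hc_j/\Hc_{j+1} = \Ac_j$. Since both $\Ac_j$ and $\Ac_{j+1}$ are abelian by the AQ property, Lemma \ref{rfggf784gfhf983hf89h393} applies and yields a linear connecting map $\widetilde\pt^1_{j+1}: H^0(X, \Ac_j) \lra H^1(X, \Ac_{j+1})$.

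The remaining task is to identify $\widetilde\pt^1_{j+1}$ with $\pt^1_{j+1}$. For this I would invoke the centrality hypothesis: for each index $j+1>0$ there exists $k < j+1$ with $\Ac_{j+1}$ central in $\Hc_k/\Hc_{j+2}$. The inclusion $\Hc_j \subset \Hc_k$ induces a morphism from the above short exact sequence into the central extension $\{e\}\to \Ac_{j+1} \to \Hc_k/\Hc_{j+2} \to \Hc_k/\Hc_{j+1}\to \{e\}$ featuring in Proposition \ref{rfbyuef8f9h39h3}, recovering exactly the commutative diagram \eqref{jkdkjvbdvbbevkenle} (with the identity on $\Ac_{j+1}$ in the first column). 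Naturality of the connecting map then gives $\widetilde\pt^1_{j+1} = \dt^1_{j+1}\circ \iota^0_{j+1} = \pt^1_{j+1}$; this is precisely the identification carried out inside the proof of Theorem \ref{fbvuyrbvybriuvoievpie}, and crucially it only uses naturality of connecting maps in degrees zero and one, so it goes through without any assumption on the AQ degree of $\Hc$.

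The explicit formula $\q^\p \longmapsto \q^\p\star\{e\}$ then follows at once from \eqref{dkldmnvldnbjkbbvyve} applied to the short exact sequence above, with the action $\star$ realised by local conjugation using lifts into $\Hc_j/\Hc_{j+2}$. There is no genuine obstacle in this approach; the only subtlety worth flagging is that while Theorem \ref{fbvuyrbvybriuvoievpie} required AQ degree $2$ in order for the \emph{whole} long exact sequence on abelian cohomology to exist, only its degree-zero and degree-one piece is needed here, and that piece makes sense for any short exact sequence of sheaves of groups with abelian ends.
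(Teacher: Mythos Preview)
Your proposal is correct and follows essentially the same route as the paper: both use the short exact sequence $\{e\}\to\Ac_{j+1}\to\Hc_j/\Hc_{j+2}\to\Ac_j\to\{e\}$ and apply Lemma \ref{rfggf784gfhf983hf89h393} directly, with the explicit formula coming from \eqref{dkldmnvldnbjkbbvyve}. Your argument is in fact more careful than the paper's two-line proof, since you spell out the identification $\widetilde\pt^1_{j+1}=\pt^1_{j+1}$ via naturality and the diagram \eqref{jkdkjvbdvbbevkenle}, whereas the paper simply asserts that $\pt$ ``is induced from'' this sequence; your observation that only the degree-zero-and-one piece of the long exact sequence is needed (so no AQ-degree hypothesis is required) is exactly the content of the Remark immediately following the corollary.
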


\begin{proof}
For each $j$ the differential $\pt$ is induced from the short exact sequence of sheaves
\[
\{e\} 
\lra
\Ac_{j+1} 
\lra 
\Hc_{j}/\Hc_{j+2}
\lra
\Ac_{j}
\lra 
\{e\}
\]
and by assumption $\Ac_{j+1}$ and $\Ac_{j}$ are abelian.
\end{proof}

\begin{REM}
\emph{We can also drop the condition that the AQ series in Corollary \ref{nkjbvkebubeufifif3ofno3} be central. More generally, for any AQ normal series $\Hc$ we can form its linearisation $\Ac$ and a mapping $\pt : H^0(X, \Ac)\ra H^1(X, \Ac[1])$ in Proposition \ref{rfbyuef8f9h39h3} (see \eqref{rf487fg873hfh89fh2j90j902j}). Lemma \ref{rfggf784gfhf983hf89h393} then asserts that this mapping will be linear.}
\end{REM}

\section{The Integration Problem}

\subsection{Torsors}
For a sheaf of groups $\Gc$ on a topological space $X$ one can, in general, form its cohomology in degrees zero and one. The degree zero cohomology of $\Gc$ comprise the global sections over $X$, $\Gc(X)$. This is a group. In contrast, the degree one cohomology of $\Gc$, ${\mbox{\v H}}^1\big(X, \Gc\big)$, is a pointed set and it is unclear how to endow it with any further structure, such as that of a group.\footnote{Of course, if $\Gc$ is a sheaf of \emph{abelian} groups, its cohomology exists in any degree and is, in addition, a finite dimensional vector space.} Elements of $\mbox{\v H}^1(X, \Gc)$ are represented by a covering $(\Ufr\ra X)$ together with $\Gc$-valued functions on each open set in the covering $\Ufr$ subject to relations on intersections and triple intersections. Such data is referred to as a \emph{torsor} or \emph{principle homogeneous $\Gc$-space} and, by construction, ${\mbox{\v H}}^1\big(X, \Gc\big)$ is the set classifying such objects (see e.g., \cite[p. 190]{BRY}). The base-point  in ${\mbox{\v H}}^1\big(X, \Gc\big)$ corresponds to the trivial torsor which is constructed by reference to the identity section over $X$.\footnote{Any morphism of sheaves of groups $\Gc^\p\ra \Gc$ induces a morphism on $1$-cohomology $\mbox{\v H}^1(X, \Gc^\p) \ra \mbox{\v H}^1(X, \Gc)$. Let $(e)$ be the trivial group. It is the initial object in the category of groups and so we have the inclusion $(e)\ra \Gc$ with $e$ mapping to the identity section in $\Gc$. As a set, $\{e\}= \mbox{\v H}^1(X, \{e\})$. The basepoint in $\mbox{\v H}^1(X, \Gc)$ is then the image of $\{e\}= \mbox{\v H}^1(X, \{e\})$ under the morphism $(e)\ra \Gc$.}

\subsection{AQ Series, Linearisations and Torsors} 
The purpose of a sheaf of AQ normal series $\Hc = (\Hc_j)_{j\geq0}$ is to study the $1$-cohomology of the sheaf of groups $\Hc_j$ by reference to the $1$-cohomology of the linearisation of $\Hc$, being $H^1\big(X,\Ac\big)$. The problem of starting with a element in the cohomology of the linearisation, $\q\in H^1\big(X,\Ac\big)$, and finding a $\Hc_j$-torsor `linearising' to $\q$ is what we will term the `integration problem'. To obtain a more precise formulation, let $X$ be a topological space and $\Hc = (\Hc_j)_{j\geq0}$ a sheaf of AQ normal series on $X$. With $\Ac = \oplus_j \Ac_j$ the linearisation of $\Hc$ we have a short exact sequence of groups,
\[
\{e\}
\lra 
\Hc_{j+1}
\lra
\Hc_j 
\lra 
\Ac_j
\lra
\{e\}
\]
leading to a long exact sequence of pointed sets, terminating on the following piece:
\begin{align}
\cdots 
\lra 
\mbox{\v H}^1\big(X, \Hc_{j+1}\big)
\lra 
\mbox{\v H}^1\big(X, \Hc_j\big)
\stackrel{\om_{j*}}{\lra}
H^1\big(X, \Ac_j\big).
\label{fcndkvbhjbvjrvjkenvke}
\end{align}
We term the map $\om_{j*}$ in the following.

\begin{DEF}\label{rfh74h94hfjf03j09f3j}
\emph{Let $\Hc = (\Hc_j)_{j\geq0}$ be a sheaf of AQ normal series on a topological space $X$. For each $j$, the mapping of pointed sets $\om_{j*}$ in \eqref{fcndkvbhjbvjrvjkenvke} is referred to as the \emph{$j$-th linearisation map}. It sends any $\Hc_j$-torsor $h$ to its \emph{linearisation} $\om_{j*}(h)\in H^1\big(X, \Ac_j\big)$.}
\end{DEF}

\noindent
Accordingly, Definition \ref{rfh74h94hfjf03j09f3j} above motivates the following.

\begin{DEF}\label{rnvrvkrnkjvnvjnkvnk}
\emph{Let $\Hc = (\Hc_j)_{j\geq0}$ be a sheaf of AQ normal series on a topological space $X$ with linearisation $\Ac$. The first cohomology $H^1(X, \Ac)$ is referred to as the \emph{space of $\Hc$-torsor linearisations}; and $H^1\big(X, \Ac_j\big)$ is the \emph{space of $\Hc_j$-torsor linearisations}.}
\end{DEF}

\begin{REM}\label{fjndkbvbvuvbuibvuibiv}
\emph{In our applications to supermanifolds, the objects in Definition \ref{rfh74h94hfjf03j09f3j} and Definition \ref{rnvrvkrnkjvnvjnkvnk} will undergo a change-in-terminology owing to their interpretations there. Indeed, what we are calling torsors here will be referred to as `supermanifold atlases'.}
\end{REM}

\noindent
With Definition \ref{rfh74h94hfjf03j09f3j} and \ref{rnvrvkrnkjvnvjnkvnk}, a natural question begs to be asked. It is this question which forms the basis of what we refer to as the \emph{integration problem}.

\begin{QUE}\label{rfuguybjbgjbgh4ih4u}
Let $\Hc = (\Hc_j)_{j\geq0}$ be a sheaf of AQ normal series on a topological space $X$. Then, given a homogeneous element $\q$ in the space of $\Hc$-torsor linearisations, when will $\q$ be the linearisation of some $\Hc_j$-torsor, for some $j$?
\end{QUE}

\noindent
Given $\q\in H^1\big(X, \Ac_j\big)$, if there exists some $\Hc_j$-torsor $h$ realising $\q$ as its linearisation, then we will say $\q$ `integrates' to $h$. Hence, to be more specific, Question \ref{rfuguybjbgjbgh4ih4u} will be referred to as the \emph{integration problem for $\q$}.

\begin{REM}
\emph{While $\q$ might integrate to $h$, this $h$ linearising to $\q$ certainly need not be unique. There will typically exist many other $h^\p$, different to $h$, and having $\q$ as its linearisation.}
\end{REM}

\subsection{The Integration Problem: Necessary Conditions} 
Sufficient conditions to resolving the integration problem are not generally apparent. A general necessary condition however is easier to deduce. As we will see below, it will involve the primary complex.

\begin{PROP}\label{fbiubiiidnono2o}
Let $\Hc$ be a sheaf of AQ central series on $X$. Denote by $\Ac$ the linearisation of $\Hc$. If the integration problem can be resolved for some $\q\in H^1\big(X, \Ac_j\big)$, then $\pt\q = 0$ where $\pt : H^1\big(X, \Ac_j\big)\ra H^2\big(X, \Ac_{j+1}\big)$ is the boundary map in the primary complex of $\Hc$.
\end{PROP}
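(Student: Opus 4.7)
The plan is to reduce the statement to an instance of Grothendieck's long exact sequence in nonabelian cohomology, applied to the central extension used in the construction of $\pt = \pt^2_{j+1}$. By Proposition \ref{rfbyuef8f9h39h3} and the definition of an AQ central series, one chooses some $k\leq j$ for which
\[
\{e\}\lra \Ac_{j+1}\lra \Hc_k/\Hc_{j+2}\lra \Hc_k/\Hc_{j+1}\lra\{e\}
\]
is central, and then writes $\pt = \dt^2_{j+1}\circ \iota^1_{j+1}$, where $\iota^1_{j+1}$ is pushforward along the inclusion $\Ac_j\hookrightarrow \Hc_k/\Hc_{j+1}$ and $\dt^2_{j+1}$ is the Grothendieck connecting map landing in $H^2(X, \Ac_{j+1})$.

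Starting from an $\Hc_j$-torsor $h$ with $\om_{j*}(h) = \q$, I would next form $\overline h\in \mbox{\v H}^1(X, \Hc_k/\Hc_{j+1})$ by pushing $h$ forward along $\Hc_j\hookrightarrow \Hc_k\twoheadrightarrow \Hc_k/\Hc_{j+1}$; this uses the inequality $k\leq j$. The key observation is then the identity $\iota^1_{j+1}(\q) = \overline h$. Both sides arise from pushing the single class $h$ along the \emph{same} morphism of sheaves of groups $\Hc_j\to \Hc_k/\Hc_{j+1}$, merely factored two different ways: through $\Hc_j\twoheadrightarrow \Ac_j\hookrightarrow \Hc_k/\Hc_{j+1}$ on the one hand, and through $\Hc_j\hookrightarrow \Hc_k\twoheadrightarrow \Hc_k/\Hc_{j+1}$ on the other. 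Commutativity of the corresponding square of sheaf-of-group morphisms is immediate from the definitions, so functoriality of the pushforward yields the identity of cohomology classes.

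With this identification in hand the rest is formal. The morphism $\Hc_j\to \Hc_k/\Hc_{j+1}$ factors further as $\Hc_j\to \Hc_k/\Hc_{j+2}\to \Hc_k/\Hc_{j+1}$, so $\overline h$ lies in the image of the pushforward $\mbox{\v H}^1(X, \Hc_k/\Hc_{j+2})\to \mbox{\v H}^1(X, \Hc_k/\Hc_{j+1})$. By exactness of Grothendieck's sequence, which extends to $H^2$ precisely thanks to the centrality of the extension above (as invoked in Proposition \ref{rfbyuef8f9h39h3}), the map $\dt^2_{j+1}$ annihilates this image. Therefore $\pt\q = \dt^2_{j+1}(\iota^1_{j+1}(\q)) = \dt^2_{j+1}(\overline h) = 0$, as desired.

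The main obstacle — though ultimately a bookkeeping one — is pinning down the identity $\iota^1_{j+1}(\q) = \overline h$ and being careful about which of the sheaf maps underlies each pushforward, since the diagram \eqref{rf487fg873hfh89fh2j90j902j} involves two different auxiliary quotients and two connecting maps with nearly identical decorations. Once that identification is settled, the proposition reduces to one line of diagram chasing via exactness.
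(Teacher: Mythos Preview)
Your proof is correct and follows the same strategy as the paper's: factor the linearisation map $\om_{j*}$ through an intermediate quotient sitting in a central extension with kernel $\Ac_{j+1}$, then invoke exactness of Grothendieck's long exact sequence to kill $\dt^2_{j+1}$ on the image. The only cosmetic difference is that the paper takes the shortcut $k=j$ and works directly with $\Hc_j/\Hc_{j+2}\twoheadrightarrow\Ac_j$, whereas you retain the general $k$ from Proposition~\ref{rfbyuef8f9h39h3}; your version is arguably a touch more careful, since it matches the actual definition of $\pt$ without implicitly appealing to naturality of the connecting map.
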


\begin{proof}
For each $j$ we have a commutative diagram of sheaves,
\[
\xymatrix{
\ar@{=}[d] \Hc_j \ar[r] & \Hc_j/\Hc_{j+2}\ar[d]
\\
\Hc_j\ar[r] & \Ac_j.
}
\]
Hence on cohomology we get
\[
\xymatrix{
\ar@{=}[d] \mbox{\v H}^1\big(X, \Hc_j\big) \ar[r] & H^1\big(X, \Hc_j/\Hc_{j+1}\big)\ar[d]
\\
\mbox{\v H}^1\big(X, \Hc_j\big)\ar[r] & H^1\big(X, \Ac_j\big)\ar[d]^\pt
\\
& H^2\big(X, \Ac_{j+1}\big)\ar[d]
\\
&\vdots
}
\]
The proposition now follows from exactness of the column.
\end{proof}

\noindent
Hence for each $j$ we have the inclusion 
\begin{align}
\img\big\{\om_{j*} : \mbox{\v H}^1(X, \Hc_j) \ra H^1(X, \Ac_j)\big\}
\subseteq \ker\big\{\pt : H^1(X, \Ac_j)\ra  H^2(X, \Ac_{j+1})\big\}.
\label{rfu4h7h9fh3fj0j90}
\end{align}
In Proposition \ref{fbiubiiidnono2o} we obtained necessary conditions for resolving the integration problem. In particular, we can comfortably conclude the following: \emph{if $\q\in H^1(X, \Ac_j)$ and $\pt\q\neq0$, then there will not exist any $\Hc_j$-torsor with $\q$ as its linearisation}. Regarding sufficient conditions, this will involve the primary cohomology in degree one.

\begin{THM}\label{fjbvhjdbvhjbdjhd}
Let $\Hc = (\Hc_j)$ be a sheaf of AQ central series on $X$ with linearisation $\Ac$ and suppose its first primary cohomology vanishes, i.e., that ${\bf H}^1_\pt(X, \Ac) = (0)$. Then the following statements are equivalent for any $j$:
\begin{enumerate}[(i)]
	\item $\q\in H^1(X, \Ac_j)$ will integrate to a $\Hc_j$-torsor;
	\item  $\pt\q = 0$.
\end{enumerate}
\end{THM}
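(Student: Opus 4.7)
The plan is to prove the two directions separately. The implication (i) $\Rightarrow$ (ii) is essentially already done: if $\q$ integrates to a $\Hc_j$-torsor $h$, then $\q = \om_{j*}(h)$ lies in the image of the linearisation map, and by the inclusion \eqref{rfu4h7h9fh3fj0j90} derived from Proposition \ref{fbiubiiidnono2o}, this image lies in the kernel of $\pt : H^1(X,\Ac_j) \to H^2(X,\Ac_{j+1})$, so $\pt\q = 0$. The only work in this direction is to cite these earlier results.

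The substantive direction is (ii) $\Rightarrow$ (i). Assume $\pt\q = 0$. By hypothesis ${\bf H}^1_\pt(X,\Ac) = (0)$, so $\ker\pt^2 = \img\pt^1$ in the primary complex; hence there exists $\q^\p \in H^0(X,\Ac_{j-1})$ with $\pt^1_j(\q^\p) = \q$. The next step is to unpack this equality using the explicit cocycle description afforded by Corollary \ref{nkjbvkebubeufifif3ofno3} together with the formula in Lemma \ref{rfggf784gfhf983hf89h393}. Namely, $\pt^1_j$ is the connecting map attached to the short exact sequence
\begin{equation*}
\{e\}\lra \Ac_j \lra \Hc_{j-1}/\Hc_{j+1}\lra \Ac_{j-1}\lra\{e\},
\end{equation*}
and if $(U_i)\ra X$ is a cover fine enough that $\q^\p|_{U_i}$ lifts to $g_i \in (\Hc_{j-1}/\Hc_{j+1})(U_i)$, then $\pt^1_j(\q^\p)$ is represented by the $\Ac_j$-valued $1$-cocycle $(g_ig_j^{-1})_{ij}$.

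The key observation, and the heart of the construction, is that on a sufficiently fine cover we may refine the lift one stage further: since $\Hc_{j-1}\twoheadrightarrow \Ac_{j-1}$ is surjective, choose $h_i \in \Hc_{j-1}(U_i)$ projecting to $\q^\p|_{U_i}$, so that $g_i$ is the image of $h_i$ modulo $\Hc_{j+1}$. Then $(h_ih_j^{-1})_{ij}$ is tautologically a $\Hc_{j-1}$-valued $1$-cocycle, and one verifies it actually takes values in the subsheaf $\Hc_j$: reducing modulo $\Hc_{j+1}$ yields $g_ig_j^{-1} \in \Ac_j = \Hc_j/\Hc_{j+1}$, and the preimage of $\Hc_j/\Hc_{j+1} \subset \Hc_{j-1}/\Hc_{j+1}$ in $\Hc_{j-1}$ is exactly $\Hc_j$. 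This cocycle therefore defines a $\Hc_j$-torsor $h \in \mbox{\v H}^1(X,\Hc_j)$, and by construction $\om_{j*}(h) = [(g_ig_j^{-1})_{ij}] = \pt^1_j(\q^\p) = \q$.

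The main obstacle I anticipate is purely bookkeeping: one must check carefully that the representative of $\pt^1_j(\q^\p)$ supplied by Lemma \ref{rfggf784gfhf983hf89h393} really does coincide, after the further lift from $\Hc_{j-1}/\Hc_{j+1}$ to $\Hc_{j-1}$, with the image of $(h_ih_j^{-1})$ under $\om_{j*}$; this uses that the various quotient maps form a commutative diagram and that normality of $\Hc_{j+1}$ in $\Hc_{j-1}$ (hypothesised at the start of the section on sheaves of AQ central series) lets one pass the quotient through products. Once these compatibilities are in place, the two maps $\pt^1_j$ and $\om_{j*}$ meet on $\q^\p$, and the integration of $\q$ is produced explicitly.
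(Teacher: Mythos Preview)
Your proposal is correct and follows the same underlying strategy as the paper: both directions use Proposition \ref{fbiubiiidnono2o} for (i)$\Rightarrow$(ii), and for (ii)$\Rightarrow$(i) both use the vanishing of ${\bf H}^1_\pt(X,\Ac)$ to write $\q=\pt^1_j\q^\p$ and then produce an $\Hc_j$-torsor from $\q^\p$. The only difference is packaging: the paper invokes the commutative triangle
\[
\xymatrix{
& \mbox{\v H}^1\big(X, \Hc_j\big)\ar[dr]^{\om_{j*}}&\\
H^0\big(X, \Ac_{j-1}\big) \ar[ur]^{\dt_j^1} \ar@{-->}[rr]_\pt & & H^1\big(X, \Ac_j\big)
}
\]
(with $\dt_j^1$ the connecting map for $\{e\}\to\Hc_j\to\Hc_{j-1}\to\Ac_{j-1}\to\{e\}$) and reads off $\q=\om_{j*}(\dt_j^1\q^\p)$, whereas you unwind this diagram into the explicit cocycle construction $(h_ih_j^{-1})$---your argument is precisely a verification, at the level of \v Cech cochains, that this triangle commutes.
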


\begin{proof}
Theorem \ref{fbiubiiidnono2o} concerns the implication \emph{(i)} $\Rightarrow$ \emph{(ii)}. Conversely, suppose $\pt\q =0$. Recall from Theorem \ref{fbvuyrbvybriuvoievpie} the construction of the primary complex from the AQ central series $\Hc = (\Hc_j)$. From this construction, along with the identifications in \eqref{eh3hf983hf98h3} and the diagram in \eqref{fjnvfvjnvkjndkjvndk}, note that the following diagram will commute:
\begin{align}
\xymatrix{
& \mbox{\v H}^1\big(X, \Hc_j\big)\ar[dr]^{\om_{j*}}&
\\
H^0\big(X, \Ac_{j-1}\big) \ar[ur]^{\dt_j^1} \ar@{-->}[rr]_\pt & & H^1\big(X, \Ac_j\big)
}
\label{fnuibvurbiuenoinioe}
\end{align}
Now if  ${\bf H}^1_\pt(X, \Ac) = (0)$ then we can equate,
\[
\ker \big\{\pt: H^1(X, \Ac_j)\ra H^2(X, \Ac_{j+1}\big\}
=
\img\big\{\pt: H^0(X, \Ac_{j-1})\ra H^2(X, \Ac_{j})\big\}
\]
Hence if $\pt\q = 0$ we can write $\q = \om_{j*} \dt_j^1\q^\p$. In particular that $\q\in \img~\om_{j*}$ and we therefore obtain the implication $(ii)\Rightarrow (i)$, as desired.
\end{proof}

\section{Applications to Supermanifolds I: Generalities}

\subsection{Preliminaries}

\subsubsection{Supermanifolds}
A pair $(X, T^*_{X, -})$, with $X$ is a complex manifold and $T^*_{X, -}$ a holomorphic vector bundle, is referred to as a \emph{model}. It is the data required to define the notion of a supermanifold $\Xc$. We reserve the prefix `abstract' to refer to supermanifolds without a choice of covering. A \emph{supermanifold} is then an abstract supermanifold $\Xc$ equipped with a covering $\Uc = (\Uc_i)_{i\in I}$ and glueing data $\rho$ on intersections. It is denoted $((\Uc, \rho)\ra \Xc)$ or more simply $(\Uc\ra \Xc)$. Before describing supermanifolds abstractly we will firstly deliberate on the prototypical example, the \emph{split model}. To a model $(X, T^*_{X, -})$ the split model is the locally ringed space $S(X, T^*_{X, -}) = (X, \wedge^\bt T^*_{X, -})$. As described in more detail in \cite{BETTPHD,BETTOBSTHICK} any covering $U = (U_i)_{i\in I}$ for $X$ with prescribed glueing data will lift to give a covering and glueing data on the split model $\big(\widetilde U\ra S(X, T^*_{X, -})\big)$, giving rise then to a (split) supermanifold. An abstract supermanifold $\Xc$ modelled on $(X, T^*_{X, -})$ is a locally ringed space which is locally isomorphic to the split model $S(X, T^*_{X, -})$.  

\begin{DEF}\label{fbvrbvirbiunvneonvoie}
\emph{An abstract supermanifold is \emph{split} if it is isomorphic to the split model. Otherwise, it is non-split.}
\end{DEF}

\noindent
\begin{REM}\label{dkdjvjdkbvkbdvbebvkun}
\emph{If we take the model $(X, T^*_{X, -})$ to be comprised of a smooth resp. complex manifold and a smooth resp. complex-smooth vector bundle, then we can form the notion of smooth and complex-smooth supermanifolds analogously. With $T_{X, -}^*$ holomorphic we have complex analytic supermanifolds and it is these which we will be primarily concerned with in this article.}
\end{REM}

\subsubsection{Green's AQ Series}
Green in \cite{GREEN} laid the foundations for a classification of complex supermanifolds, building on Batchelor's classification in \cite{BAT} in the smooth setting. We review Green's work in the language used in this article. To a model $(X, T^*_{X, -})$ we have the sheaf of exterior algebras $\wedge^\bt T^*_{X, -}$. This is a finite, non-negatively graded, sheaf of $\Oc_X$-modules, where $\Oc_X$ is the structure sheaf of $X$ coming from its complex structure. Let $J$ be the irrelevant ideal and $J^k$ its $k$-th power. As $\Oc_X$-modules, $J \cong\oplus_{j>0}\wedge^jT^*_{X, -}$ and $J^k \cong \oplus_{j\geq k}\wedge^jT^*_{X, -}$. Now $\wedge^\bt T^*_{X, -}$ is a sheaf of supercommutative algebras with respect to the wedge product. Denote by $\mathcal Aut_{\Zbb_2}\wedge^\bt T^*_{X, -}$ those automorphisms which preserve the $\Zbb_2$-grading. Then with respect to $J$ we can form the following sheaf of groups,
\begin{align}
\Gc^{(k)}_{T^*_{X, -}} 
\stackrel{\Delta}{=}
\big\{\al\in \mathcal Aut_{\Zbb_2}\wedge^\bt T^*_{X,-}
\mid
\al(u) - u\in J^k
\big\}.
\label{fvnrnvnkjnkfjnwjkfnjkfnk}
\end{align}
Set $\mathrm G_{T^*_{X, -}} = \big(\Gc^{(k)}_{T^*_{X, -}}\big)_{k>1}$. The following properties, relevant for the purposes of this article, were derived by Green in \cite{GREEN}:
\begin{enumerate}[$\bt$]
	\item $\mathrm{G}_{T^*_{X, -}}$ is a finite collection of groups and, for all $k> \mathrm{rank}~T^*_{X, -}$, the $k$-th term, $\mathrm{G}_{T^*_{X, -}, k} = \Gc^{(k)}_{T^*_{X, -}}$ is trivial;
	\item for each $k$ we have an inclusion of sheaves of groups
	\[
	\mathrm{G}_{T^*_{X, -}, k+1} \subset \mathrm{G}_{T^*_{X, -}, k}
	\]
	realising $\mathrm{G}_{T^*_{X, -}, k+1}$ as a sheaf of normal subgroups of $\mathrm{G}_{T^*_{X, -}, k}$;
	\item the quotient $\mathrm{G}_{T^*_{X, -}, k}/\mathrm{G}_{T^*_{X, -}, k}$ is a sheaf of abelian groups;
	\item for each $k$, $\mathrm{G}_{T^*_{X, -}, k}$ is a sheaf of normal subgroups of $\mathrm{G}_{T^*_{X, -}, 2}$.
\end{enumerate}
With these bullet points we can deduce the following.

\begin{PROP}\label{rh74hf983j0j309d930k}
To any model $(X, T^*_{X, -})$ the collection $\mathrm{G}_{T^*_{X, -}}$ defines a sheaf of AQ normal series on $X$.\qed
\end{PROP}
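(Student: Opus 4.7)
The plan is to verify each axiom of a sheaf of AQ normal series directly from the four bulleted properties of $\mathrm{G}_{T^*_{X,-}}$ recorded immediately before the proposition. Since the proposition is essentially a repackaging of Green's work in \cite{GREEN}, the task is bookkeeping rather than new argument — the substantive content already lives in the bullets.

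First I would fix notation to match Definition \ref{fvjdiovnfvnjkvnkjrnkvj} and the surrounding definitions by reindexing: set $\Hc_j := \mathrm{G}_{T^*_{X,-},\,j+2}$ for $j \geq 0$, so that $\Hc_0 = \mathrm{G}_{T^*_{X,-},\,2}$ plays the role of the ambient group $G$ in the definition of a subnormal series. With this relabelling the verification proceeds one axiom at a time. Finiteness and the requirement $H_N = \{1\}$ come from the first bullet: once $k$ exceeds $\mathrm{rank}\,T^*_{X,-}$, the sheaf $\mathrm{G}_{T^*_{X,-},\,k}$ is trivial, so the descending chain terminates after finitely many steps. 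Condition $H_{j+1} \lhd H_j$ is exactly the content of the second bullet.

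To get a \emph{normal} series rather than merely a subnormal one, one invokes the fourth bullet, which asserts that every $\mathrm{G}_{T^*_{X,-},\,k}$ is a sheaf of normal subgroups of the ambient $\mathrm{G}_{T^*_{X,-},\,2} = \Hc_0$; this is the non-trivial piece, since (as noted in the text) normality is in general not transitive. Finally, the AQ property — that each successive quotient $\Hc_j/\Hc_{j+1}$ is a sheaf of abelian groups — is the third bullet (read with the evident typo corrected to $\mathrm{G}_{T^*_{X,-},\,k}/\mathrm{G}_{T^*_{X,-},\,k+1}$).

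There is no genuine obstacle in the proof as written; the only thing worth remarking on is the indexing shift, because Green's parameter $k$ starts at $2$ while the abstract definition of a normal series is indexed from $j = 0$. All the hard analytic and algebraic work lies in establishing the four bulleted properties in \cite{GREEN}, and once those are in hand the proposition follows by a one-line appeal to each in turn.
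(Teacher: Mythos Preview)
Your proposal is correct and matches the paper's approach exactly: the paper presents this proposition with a \qed\ immediately following the four bulleted properties from \cite{GREEN}, making clear it is intended as a direct repackaging of those facts with no further argument. Your write-up simply spells out the correspondence (including the harmless index shift) that the paper leaves implicit.
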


\noindent
Onishchik in \cite{ONISHCLASS} observes that the AQ series $\mathrm{G}_{T^*_{X, -}}$ will  have degree $2$. With the aid of the Campbell-Baker-Hausdorf this observation of Onishichik is improved upon in \cite{BETTEMB}.

\begin{PROP}\label{cnjbvkjbrvkjneke}
The sheaf of AQ normal series $\mathrm{G}_{T^*_{X, -}}$ has AQ degree $4$.
\qed
\end{PROP}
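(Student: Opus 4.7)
The plan is to pass from the group side to the Lie algebra of derivations via the exponential map, compute commutators with Baker--Campbell--Hausdorff, and exploit the parity-preservation inherent to $\Zbb_2$-graded automorphisms. Concretely, I would aim to establish the inclusion
\[
\bigl[\mathrm{G}^{(j)}_{T^*_{X,-}}, \mathrm{G}^{(j)}_{T^*_{X,-}}\bigr] \subseteq \mathrm{G}^{(j+4)}_{T^*_{X,-}}
\]
for every $j\geq 2$. This is precisely the AQ-degree-$4$ property, since the intermediate quotients $\mathrm{G}^{(j)}/\mathrm{G}^{(j+k)}$ for $k=1,2,3$ are then obtained as quotients of the abelian group $\mathrm{G}^{(j)}/\mathrm{G}^{(j+4)}$.

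First I would set up the Lie-algebraic side. Let $\Dscr$ denote the sheaf of $\Cbb$-linear, $\Zbb_2$-grading-preserving derivations of $\wedge^\bullet T^*_{X,-}$, filtered by $\Dscr^{(k)} := \{D\in\Dscr : D(\wedge^\bullet T^*_{X,-})\subseteq J^k\}$. Because $J$ is locally nilpotent (indeed $J^k=0$ once $k>\mathrm{rank}\,T^*_{X,-}$), the formal series $\exp(D)=\sum_{n\ge 0}D^n/n!$ and $\log(\id+\eta)=\sum_{n\ge 1}(-1)^{n-1}\eta^n/n$ truncate at each stalk. I would verify that these yield mutually inverse sheaf bijections $\exp:\Dscr^{(2)}\stackrel{\sim}{\lra}\mathrm{G}^{(2)}_{T^*_{X,-}}$, compatible with the filtrations in the sense that $\exp(\Dscr^{(k)})=\mathrm{G}^{(k)}_{T^*_{X,-}}$ for every $k\ge 2$.

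Second, I would exploit the parity rigidity. Since a $\Zbb_2$-grading-preserving derivation of the super-algebra $\wedge^\bullet T^*_{X,-}$ must shift the underlying $\Zbb$-wedge grading by an even amount, each $D\in\Dscr$ decomposes as $D=\sum_{n\ge 1} D_{2n}$ with $D_{2n}(\wedge^p T^*_{X,-})\subseteq \wedge^{p+2n}T^*_{X,-}$. Combined with the Leibniz rule, this even-shift rigidity yields the filtration-multiplicativity $[\Dscr^{(m)},\Dscr^{(n)}]\subseteq \Dscr^{(m+n)}$ and upgrades the ``$J^j$'' condition to an extra unit on odd generators, where the image lands in $\wedge^{\ge [j]^{\mathrm{odd}}}$, the smallest odd wedge-degree at least~$j$.

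Third, I would invoke BCH. Writing $\alpha=\exp(D)$ and $\beta=\exp(E)$ with $D,E\in\Dscr^{(j)}$, one has
\[
\log\bigl([\alpha,\beta]\bigr) = [D,E] + \tfrac{1}{2}\bigl([D,[D,E]] + [E,[D,E]]\bigr)+\cdots,
\]
a finite sum of nested brackets of $D$ and $E$. The multiplicativity of the filtration places the leading bracket in $\Dscr^{(2j)}$ and each successive nested bracket even deeper. For $j\ge 4$ the bound $2j\ge j+4$ is already automatic, so exponentiating back gives $[\alpha,\beta]\in \mathrm{G}^{(j+4)}_{T^*_{X,-}}$.

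The hard part of the plan is the small-index regime $j\in\{2,3\}$, where the naive bound $2j$ falls short of the target $j+4$. In these cases I would combine the parity-enforced evenness of wedge-shifts with the cancellations in the leading term of $[D,E]$ produced by the Leibniz rule on odd generators, to squeeze two additional units of filtration out of the bracket and push it into $\Dscr^{(j+4)}$. Once this careful bookkeeping at the bottom of the filtration is carried through, the AQ-degree-$4$ property follows uniformly in $j$ by exponentiating the Lie-algebra inclusion back to the group level.
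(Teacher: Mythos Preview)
The paper does not give its own argument here: the proposition is stated with a \qed and attributed to \cite{BETTEMB}, with the single hint that it is proved ``with the aid of the Campbell--Baker--Hausdorff'' formula. Your overall strategy---pass to the Lie algebra of even derivations via $\exp/\log$ and control group commutators through iterated Lie brackets via BCH---is therefore exactly the route the paper points to, so at the level of approach you are aligned with the cited proof.

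There are, however, two genuine gaps in the execution. First, the filtration-multiplicativity $[\Dscr^{(m)},\Dscr^{(n)}]\subseteq \Dscr^{(m+n)}$ does not follow from your definition $\Dscr^{(k)}=\{D:D(\wedge^\bullet T^*_{X,-})\subseteq J^k\}$. For $D\in\Dscr^{(m)}$ and $v=\theta_1\cdots\theta_n\in\wedge^n$, the Leibniz rule gives $D(v)=\sum_i\pm\theta_1\cdots D(\theta_i)\cdots\theta_n$ with $D(\theta_i)\in J^m$ and the remaining factor in $J^{n-1}$, so only $D(J^n)\subseteq J^{m+n-1}$, whence $[\Dscr^{(m)},\Dscr^{(n)}]\subseteq\Dscr^{(m+n-1)}$. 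Parity alone (``even derivations shift the $\Zbb$-degree by an even amount'') does not close this gap, because your filtration $\Dscr^{(k)}$ is by the \emph{target ideal} $J^k$, not by degree-shift; these two filtrations differ precisely by one step on the odd generators. You need either to pass to the degree-shift filtration $\widetilde\Dscr^{(k)}=\{D:D(\wedge^p)\subseteq\wedge^{\ge p+k}\}$ (which is genuinely multiplicative) and then carefully translate back to $\mathrm{G}^{(k)}$, or to exploit parity at each step of the bracket computation rather than asserting multiplicativity outright.

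Second, the cases $j\in\{2,3\}$ are not a bookkeeping afterthought---they are the entire content of the proposition beyond Onishchik's AQ-degree-$2$ result. For $j\ge 4$ the inclusion $[\mathrm{G}^{(j)},\mathrm{G}^{(j)}]\subseteq\mathrm{G}^{(2j)}\subseteq\mathrm{G}^{(j+4)}$ is automatic once one has the right filtration, but for $j=2$ you need to push the commutator from $\mathrm{G}^{(4)}$ all the way into $\mathrm{G}^{(6)}$, and for $j=3$ from $\mathrm{G}^{(6)}$ into $\mathrm{G}^{(7)}$. Your plan to ``squeeze two additional units of filtration out of the bracket'' by combining parity with Leibniz cancellations is the right instinct, but it needs to be made explicit: one must track separately how the leading bracket $[D,E]$ acts on $\Oc_X$ versus on $T^*_{X,-}$ and use Green's identification of $\Ac_{T^*_{X,-},j}$ (tangential for even $j$, normal for odd $j$) to see the required vanishing. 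As written, this step is asserted rather than argued.
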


\begin{DEF}
\emph{To a model $(X, T^*_{X, -})$, the AQ normal series $\mathrm{G}_{T^*_{X, -}}$ will be referred to as the \emph{AQ series of the model $(X, T^*_{X, -})$}.}
\end{DEF}

\noindent
In what follows we set $\Gc_{T^*_{X, -}} \stackrel{\Delta}{=} \mathrm G_{T^*_{X,-}, 2}$, for notational convenience.

\subsection{Classification of Supermanifolds}
Fix a model $(X, T^*_{X, -})$. Then any supermanifold $(\Uc\ra\Xc)$ modelled on $(X,T^*_{X, -})$ will define a class in the set $\mbox{\v H}^1\big(X, \Gc_{T^*_{X, -}}\big)$. This allows for a classification of supermanifolds. One of the main results in \cite{GREEN} is in presenting such a classification.

\begin{THM}\label{rfg78f93jf09j3903f3f3}
To a model $(X, T^*_{X, -})$ there exists an action of the group of global automorphisms $\Aut~T^*_{X, -}$ on $\mbox{\emph{\v H}}^1\big(X, \Gc_{T^*_{X, -}}\big)$ and, moreover, there exists a bijective correspondence of pointed sets:
\[
\frac{\mbox{\emph{\v H}}^1\big(X, \Gc_{T^*_{X, -}}\big)}{\Aut~T^*_{X, -}}
\cong
\left\{
\begin{array}{l}
\mbox{isomorphism classes of supermanifolds}\\
\mbox{modelled on $(X, T^*_{X, -})$}
\end{array}
\right\}
\]
with the $\Aut~T^*_{X, -}$-orbit of the base-point in $\mbox{\emph{\v H}}^1\big(X, \Gc_{T^*_{X, -}}\big)$ corresponding to the isomorphism class of the split model $\big(\Uc\ra S(X, T^*_{X, -})\big)$.\qed
\end{THM}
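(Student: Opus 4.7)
The plan is to build the classifying bijection by extracting \v{C}ech cocycles from local trivialisations of a supermanifold, with the action of $\Aut~T^*_{X, -}$ isolating the residual choice of trivialisation.

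First I would construct the map. Given a supermanifold $(\Uc\ra\Xc)$ modelled on $(X, T^*_{X, -})$, the local triviality condition from Definition \ref{fbvrbvirbiunvneonvoie} supplies isomorphisms $\phi_i : \Xc|_{\Uc_i}\stackrel{\sim}{\ra} S(X, T^*_{X, -})|_{U_i}$ on each patch. On overlaps the transition $g_{ij} = \phi_i\phi_j^{-1}$ is a $\Zbb_2$-preserving automorphism of $\BW^\bt T^*_{X, -}|_{U_{ij}}$, which is the identity on $\Oc_X = \BW^\bt T^*_{X, -}/J$ (because the underlying manifold is fixed as $X$) and on $T^*_{X, -} = J/J^2$ (because the odd datum of the model is fixed as $T^*_{X, -}$); hence $g_{ij}$ lies in $\Gc_{T^*_{X, -}} = \mathrm G_{T^*_{X, -},2}$. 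The cocycle identity is automatic, so $\Xc$ determines a class $[g]\in\mbox{\v H}^1(X,\Gc_{T^*_{X, -}})$, independent of the chosen cover by the standard refinement argument in \v Cech cohomology.

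Second I would identify the ambiguity and define the $\Aut~T^*_{X, -}$-action. Let $\mathcal H$ denote the sheaf of $\Zbb_2$-preserving automorphisms of $\BW^\bt T^*_{X, -}$ acting trivially on $\Oc_X$; there is a short exact sequence of sheaves of groups
\[
\{e\}\lra \Gc_{T^*_{X, -}}\lra \mathcal H\lra \mathcal{A}ut(T^*_{X, -})\lra \{e\}.
\]
Any $\psi\in\Aut~T^*_{X, -} = H^0(X,\mathcal{A}ut(T^*_{X, -}))$ lifts, on a fine enough cover, to sections $\tilde\psi_i$ of $\mathcal H$; the assignment $g_{ij}\mapsto \tilde\psi_ig_{ij}\tilde\psi_j^{-1}$ defines an action on $\mbox{\v H}^1(X,\Gc_{T^*_{X, -}})$ which is independent of the lifts, since the ratio of two lifts is a $\Gc_{T^*_{X, -}}$-cochain and conjugation by it produces a coboundary. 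A second choice of trivialisations $\phi'_i = h_i\phi_i$ with $h_i\in\mathcal H(\Uc_i)$ yields $g'_{ij} = h_ig_{ij}h_j^{-1}$; the projections $\bar h_i\in\mathcal{A}ut(T^*_{X, -})(U_i)$ agree on overlaps because $g_{ij},g'_{ij}$ both act trivially on $T^*_{X, -}$, hence glue to a global $\psi\in\Aut~T^*_{X, -}$ with $[g'] = \psi\cdot[g]$. Thus the orbit $[g]\bmod \Aut~T^*_{X, -}$ depends only on $\Xc$.

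Finally I would establish the bijection and the base-point. Surjectivity is descent: any $\Gc_{T^*_{X, -}}$-valued 1-cocycle glues copies of $S(X, T^*_{X, -})|_{U_i}$ into a locally ringed space locally isomorphic to $S(X, T^*_{X, -})$, i.e. a supermanifold modelled on $(X, T^*_{X, -})$. For injectivity modulo the action, any isomorphism $F:\Xc\stackrel{\sim}{\ra}\Xc'$, after passing to a common refinement of the two covers, is encoded by $h_i = \phi'_iF\phi_i^{-1}\in\mathcal H(\Uc_i)$ satisfying $h_ig_{ij}h_j^{-1} = g'_{ij}$, and the descent argument above again yields $[g']=\psi\cdot[g]$; conversely, cocycles related by such a $\psi$ admit an explicit supermanifold isomorphism built from a lift $\tilde\psi$. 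The identity cocycle glues to $S(X, T^*_{X, -})$ by construction, and its orbit is itself since $\tilde\psi\cdot 1\cdot \tilde\psi^{-1} = 1$. The main obstacle is the descent step showing that the local projections $\bar h_i$ in $\mathcal{A}ut(T^*_{X, -})$ glue to a global bundle automorphism; this is forced precisely by the fact that both transition cocycles lie in the kernel of $\mathcal H\to\mathcal{A}ut(T^*_{X, -})$, which is exactly the hypothesis that $\Xc$ and $\Xc'$ share the same odd datum $T^*_{X, -}$ (not merely an isomorphic one).
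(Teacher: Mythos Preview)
The paper does not prove this theorem; it is stated without argument and attributed to Green \cite{GREEN} as one of the main results there. Your sketch reconstructs what is essentially Green's standard argument and is correct. The only point worth tightening is in your first paragraph: that $g_{ij}$ acts trivially on $J/J^2$ is not automatic from local triviality alone, but requires first normalising each $\phi_i$ so that the induced map on $J/J^2$ agrees with the fixed global identification $\Jc/\Jc^2\cong T^*_{X,-}$ (which is part of the data of being ``modelled on $(X,T^*_{X,-})$''). This normalisation is always possible since any local automorphism of $T^*_{X,-}$ lifts to a section of your $\mathcal H$, and after it the rest of your argument goes through as written.
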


\noindent
Hence by Green's classification in Theorem \ref{rfg78f93jf09j3903f3f3}, a supermanifold modelled on $(X,T^*_{X, -})$ will be a representative of an element in $\mbox{\v H}^1\big(X, \Gc_{T^*_{X, -}}\big)$, up to an action of the global automorphisms $\Aut~T^*_{X, -}$.

\subsubsection{Strong Splitting}
The starting point behind the study of supermanifolds and higher obstructions in \cite{BETTHIGHOBS} is from Green's classification in Theorem \ref{rfg78f93jf09j3903f3f3}, which motivated the notion of a `strongly split supermanifold atlas'. We review this notion here, starting with the following.

\begin{DEF}
\emph{Let $(X, T^*_{X, -})$ be a model and $\mathrm G_{T^*_{X, -}}$ its AQ series. Any representative of an element in the $1$-cohomology $\mbox{\v H}^1\big(X, \mathrm G_{T^*_{X, -}, j}\big)$ is called 
\emph{a $j$-th order supermanifold atlas}, or simply `$j$-th order atlas'.}
\end{DEF}

\noindent
For any $j$ we have the inclusion $\mathrm G_{T^*_{X, -}, j}\ra \Gc_{T^*_{X, -}}$ inducing a map on cohomology $\mbox{\v H}^1\big(X, \mathrm G_{T^*_{X, -}, j}\big)\stackrel{\tau_{j*}}{\ra} \mbox{\v H}^1\big(X, \Gc_{T^*_{X, -}}\big)$. Hence any $j$-th order supermanifold atlas will define a supermanifold modelled on $(X, T^*_{X, -})$.

\begin{DEF}\label{tg478f79hf830j4444}
\emph{For each $j$ the image of a $j$-th order atlas under the map $\tau_{j*}:\mbox{\v H}^1\big(X, \mathrm G_{T^*_{X, -}, j}\big)\ra \mbox{\v H}^1\big(X, \Gc_{T^*_{X, -}}\big)$ is referred to as its \emph{associated supermanifold}. The atlas is \emph{split} if its associated supermanifold is split.}
\end{DEF}

\noindent
Conversely, we can refer to supermanifold atlases relative to a supermanifold.

\begin{DEF}
\emph{Let $(\Ufr\ra\Xfr)$ be a supermanifold modelled on $(X, T^*_{X, -})$. We say $(\Ufr\ra\Xfr)$ admits a \emph{level $j$ structure} if there exists a $j$-th order atlas whose image in $\mbox{\v H}^1\big(X, \Gc_{T^*_{X, -}}\big)$ coincides with the class defined by $(\Ufr\ra\Xfr)$. Similarly, starting from a $j$-th order supermanifold atlas, we say it admits a $j^\p$-th order structure, for some $j^\p> j$, if it lies in the image of an element in $\mbox{\v H}^1\big(X, \mathrm G_{T^*_{X, -}, j^\p}\big)$.}
\end{DEF}

\noindent
By Definition \ref{fbvrbvirbiunvneonvoie} a supermanifold is split if it is isomorphic to its split model. The classification in Theorem \ref{rfg78f93jf09j3903f3f3} then asserts: \emph{if a supermanifold modelled on $(X, T^*_{X, -})$ is split then it will lie in the orbit of the base-point under the action of $\Aut~T^*_{X, -}$}. Disregarding the $\Aut~T^*_{X, -}$-action then, we arrive at the following notion of splitting.

\begin{DEF}\label{ruieff98fh83f309jf3jf4rf}
\emph{A $j$-th order supermanifold atlas modelled on $(X, T^*_{X, -})$ is \emph{strongly split} if its associated supermanifold represents the base-point in $\mbox{\v H}^1\big(X, \Gc_{T^*_{X, -}}\big)$.}
\end{DEF}

\noindent
To justify the terminology in the above definition, any strongly split supermanifold atlas is split but not necessarily conversely.

\subsection{Obstructions to Existence and Splitting}

\subsubsection{Obstructions to Splitting}
With Proposition \ref{rh74hf983j0j309d930k} and \ref{cnjbvkjbrvkjneke} we can apply the derivations in previous sections to the AQ series $\mathrm G_{T^*_{X, -}}$. Let $\Ac_{T^*_{X, -}}$ denote the linearisation of $\mathrm G_{T^*_{X, -}}$. It is a $\Zbb$-graded sheaf of abelian groups and in other articles by the auther, e.g., in \cite{BETTOBSTHICK, BETTEMB, BETTHIGHOBS}, it is referred to as the \emph{obstruction sheaf of the model $(X, T^*_{X, -})$}. This is because its $1$-cohomology, referred to as the \emph{obstruction space of $(X, T^*_{X, -})$}, houses the obstruction classes to splitting. More precisely:

\begin{DEF}\label{rgf874f9h39830j39}
\emph{Fix a model $(X, T^*_{X, -})$. Then
\begin{enumerate}[$\bt$]
	\item the linearisation $\Ac_{T^*_{X, -}}$ of the AQ series $\mathrm G_{T^*_{X, -}}$ is referred to as the \emph{obstruction sheaf of $(X, T^*_{X, -})$};
	\item the $1$-cohomology of the obstruction sheaf of $(X, T^*_{X,-})$, $H^1\big(X, \Ac_{T^*_{X, -}}\big)$, is referred to as the \emph{obstruction space of $(X, T^*_{X, -})$} and;
	\item the linearisation of a given supermanifold atlas modelled on $(X, T^*_{X, -})$ is referred to as its \emph{obstruction to splitting};
\end{enumerate}}
\end{DEF}

\noindent
The term `obstruction to splitting' in Definition \ref{rgf874f9h39830j39} can be traced back to the works of Berezin, collected in \cite{BER}. Indeed, it is shown there that if the obstructions to splitting a supermanifold vanishes, then the supermanifold is split. This statement relies on the following statements which we present here without proof, for completeness of exposition: 
\begin{enumerate}[$\bt$]
	\item if the obstruction to splitting a $j$-th order supermanifold atlas vanishes, then the supemanifold atlas will admit a $(j+1)$-th order structure;
	\item with respect to a model $(X, T^*_{X, -})$ with $q = \mathrm{rank}~T^*_{X, -}$, any $j$-th order supermanifold atlas with $j> q$ is strongly split and;
	\item any $j$-th order supermanifold atlas which admits a $j^\p$-th order structure, for $j^\p> q$, is strongly split.
\end{enumerate}
The integration problem here is then concerned with the following question:

\begin{QUE}\label{fbvuirbvuviuneinvoee}
To a homogeneous class $\q$ in the obstruction space of a model, when will there exist a supermanifold atlas realising $\q$ as its obstruction to splitting?
\end{QUE}

\subsubsection{The Obstruction Complex of a Model}
In Theorem \ref{fvnrbvurnvnvkmvkel}
we constructed a complex of vector spaces from the data of a sheaf of AQ normal series of degree $4$. By Proposition \ref{rh74hf983j0j309d930k} and \ref{cnjbvkjbrvkjneke} we have precisely such data $\mbox G_{T^*_{X, -}}$ associated to any model $(X, T^*_{X, -})$. Since the linearisation of $\mbox G_{T^*_{X, -}}$ are the obstruction sheaves of the model $(X, T^*_{X, -})$ we arrive now at the notion of an `obstruction complex'.

\begin{DEF}
\emph{Fix a model $(X, T^*_{X, -})$ with AQ series $\mbox G_{T^*_{X, -}}$ and linearisation $\Ac_{T^*_{X, -}}$. Then the complex $\big(\Cc^\bt (X, \Ac_{T^*_{X, -}}), \pt\big)$ with $n$-th term
\begin{align*}
\Cc^n\big(X, \Ac_{T^*_{X, -}}\big) = H^n\big(X, \Ac_{T^*_{X,-}}\big)
&&
\mbox{and}
&&
\pt : 
H^n\big(X, \Ac_{T^*_{X,-}}\big)
\lra 
H^n\big(X, \Ac_{T^*_{X,-}}[1]\big),
\end{align*}
whose existence is guaranteed by Proposition $\ref{cnjbvkjbrvkjneke}$ and Theorem $\ref{fvnrbvurnvnvkmvkel}$, will be referred to as  the \emph{obstruction complex of the model $(X, T^*_{X, -})$.} Its cohomology will be referred to as the \emph{obstruction cohomology of $(X, T^*_{X, -})$}, denoted ${\bf H}_\pt^\bt \big(X, \Ac_{T^*_{X, -}}\big)$.}
\end{DEF}

\noindent
The significance of the higher order terms in the obstruction complex and cohomology are presently unclear. In degrees zero, one and two which constitutes the `primary complex' by Definition \ref{g78f7f93hf83h803}, we have a clear relation to the integration problem posed in Question \ref{fbvuirbvuviuneinvoee}.

\subsubsection{The Primary Complex}
To a model $(X, T^*_{X, -})$, its primary complex is the following complex of maps of $\Zbb$-graded vector spaces:
\begin{align}
\xymatrix{
H^0\big(X, \Ac_{T^*_{X, -}}[-1]\big)
\ar[r]^\pt
& 
H^1\big(X, \Ac_{T^*_{X, -}}\big)
\ar[r]^\pt 
& 
H^2\big(X, \Ac_{T^*_{X, -}}[1]\big)
}
\label{rbffh93hf9hf3fj390jf034}
\end{align}
The maps $\pt$ are linear and fit into the larger, obstruction complex. For the applications we give here however, knowledge about the primary complex is sufficient.  In \cite{BETTPHD, BETTOBSTHICK}, Question \ref{fbvuirbvuviuneinvoee} was raised and motivated much of the study presented there. A necessary condition to resolving Question \ref{fbvuirbvuviuneinvoee} was identified in \cite[Theorem 3.14, p. 31]{BETTOBSTHICK}. We recover this result below as an application of Proposition \ref{fbiubiiidnono2o}.

\begin{THM}\label{rfh89hf3jf903jf3}
To any model $(X, T^*_{X, -})$ the boundary map $\pt: H^1\big(X, \Ac_{T^*_{X, -}}\big)\ra H^2\big(X, \Ac_{T^*_{X, -}}[1]\big)$ in its primary complex in \eqref{rbffh93hf9hf3fj390jf034} measures the failure for a homogeneous element $\q\in H^1\big(X, \Ac_{T^*_{X, -}}\big)$ to represent an obstruction to splitting a supermanifold atlas modelled on $(X, T^*_{X, -})$.\qed
\end{THM}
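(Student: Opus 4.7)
The plan is to recognise this statement as a direct specialisation of Proposition \ref{fbiubiiidnono2o} to the AQ series $\mathrm G_{T^*_{X,-}}$ associated to the model. By Proposition \ref{rh74hf983j0j309d930k} this collection is a sheaf of AQ normal series, and by Proposition \ref{cnjbvkjbrvkjneke} it has AQ degree $4$, hence in particular is central (any AQ normal series of degree $>1$ is central, as noted after Definition \ref{rfh894fh894hf0jf093j0}). Therefore the linearisation $\Ac_{T^*_{X,-}}$ fits into the framework of Section 4 and the primary complex \eqref{rbffh93hf9hf3fj390jf034} is well-defined.

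Next I would unwind the terminology. By Definition \ref{rgf874f9h39830j39}, an \emph{obstruction to splitting} a $j$-th order supermanifold atlas $h\in \mbox{\v H}^1(X,\mathrm G_{T^*_{X,-},j})$ is, by definition, its image $\om_{j*}(h)\in H^1(X,\Ac_{T^*_{X,-},j})$ under the $j$-th linearisation map of Definition \ref{rfh74h94hfjf03j09f3j}. Consequently, the assertion that a homogeneous class $\q\in H^1(X,\Ac_{T^*_{X,-}})$ is realised as an obstruction to splitting \emph{some} atlas is precisely the statement that $\q$ admits a resolution of the integration problem in the sense of Question \ref{rfuguybjbgjbgh4ih4u}, applied to the AQ series $\mathrm G_{T^*_{X,-}}$.

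With this dictionary in hand, Proposition \ref{fbiubiiidnono2o} applies verbatim: it provides the necessary condition $\pt \q = 0$, where $\pt : H^1(X,\Ac_{T^*_{X,-}})\ra H^2(X,\Ac_{T^*_{X,-}}[1])$ is the boundary map in the primary complex. Equivalently, the containment in \eqref{rfu4h7h9fh3fj0j90} specialises to
\[
\img\bigl\{\om_{j*} : \mbox{\v H}^1(X,\mathrm G_{T^*_{X,-},j})\ra H^1(X,\Ac_{T^*_{X,-},j})\bigr\}
\subseteq
\ker\bigl\{\pt : H^1(X,\Ac_{T^*_{X,-},j})\ra H^2(X,\Ac_{T^*_{X,-},j+1})\bigr\},
\]
which is exactly the statement that $\pt\q$ measures the failure of $\q$ to be realised as the obstruction to splitting a supermanifold atlas. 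I do not anticipate any serious obstacle; the only subtlety is purely linguistic, namely verifying that the change-in-terminology flagged in Remark \ref{fjndkbvbvuvbuibvuibiv}---``torsor'' versus ``supermanifold atlas'', and ``linearisation'' versus ``obstruction to splitting''---translates the conclusion of Proposition \ref{fbiubiiidnono2o} into the statement of the theorem without loss.
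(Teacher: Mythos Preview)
Your proposal is correct and matches the paper's approach exactly: the paper states this theorem with a \qed and no separate proof, having already noted just before it that the result is recovered ``as an application of Proposition \ref{fbiubiiidnono2o}'', which is precisely the specialisation you carry out. Your unwinding of the dictionary (torsor $\leftrightarrow$ supermanifold atlas, linearisation $\leftrightarrow$ obstruction to splitting) via Definitions \ref{rfh74h94hfjf03j09f3j} and \ref{rgf874f9h39830j39} is the intended translation, and invoking Propositions \ref{rh74hf983j0j309d930k} and \ref{cnjbvkjbrvkjneke} to ensure the hypotheses of Proposition \ref{fbiubiiidnono2o} are met is exactly right.
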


\begin{REM}
\emph{We note that Theorem \ref{rfh89hf3jf903jf3} is not in itself a new result and was known at least to Eastwood and LeBrun in \cite{EASTBRU}. The derivation of Theorem \ref{rfh89hf3jf903jf3} by Eastwood and LeBrun is different than here and involves spectral sequences.}
\end{REM}

\noindent
In the course of resolving Question \ref{fbvuirbvuviuneinvoee}, three categories of classification were identified in \cite{BETTOBSTHICK} pertaining to homogeneous elements $\q$ in the obstruction space of a model. We say $\q$ represents:
\begin{enumerate}[(i)]
	\item \emph{a supermanifold} if there exists a supermanifold atlas realising $\q$ as its obstruction to splitting;
	\item \emph{a pseudo-supermanifold} if there does \emph{not} exist any such supermanifold atlas, and yet $\pt\q = 0$ and;
	\item \emph{an obstructed thickening} if $\pt\q\neq0$.
\end{enumerate}
In contrast with obstructed thickenings, where examples were constructed over the projective plane in \cite{BETTOBSTHICK}, examples of pseudo-supemanifolds are not so forth-coming. In what follows we will see how the obstruction cohomology of a model `anti-obstructs' the existence of pseudo-supermanifolds.

\begin{THM}\label{rfuihf3hf98h3f89h398fh3}
Let $(X, T^*_{X, -})$ be a model and suppose its first obstruction cohomology vanishes, i.e., that ${\bf H}_\pt^1(X, \Ac_{T^*_{X, -}}) = (0)$. Then any homogeneous element $\q\in H^1(X, \Ac_{T^*_{X, -}})$ will be the obstruction to splitting some supermanifold atlas if and only if $\pt\q = 0$.
\end{THM}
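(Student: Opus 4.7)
The plan is to deduce this theorem as a direct specialisation of Theorem \ref{fjbvhjdbvhjbdjhd} to the AQ series $\mathrm G_{T^*_{X, -}}$ associated to the model $(X, T^*_{X, -})$, once two language-translations are made: the ``obstruction to splitting'' is identified with the linearisation map of Definition \ref{rfh74h94hfjf03j09f3j}, and the existence of a supermanifold atlas realising $\q$ is identified with a positive resolution of the integration problem.

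First I would verify that $\mathrm G_{T^*_{X, -}}$ is an AQ \emph{central} series in the sense of Definition \ref{rfh894fh894hf0jf093j0}. By Proposition \ref{cnjbvkjbrvkjneke} this series has AQ degree $4$, so in particular, for each $j$, the quotient $\mathrm G_{T^*_{X, -}, j-1}/\mathrm G_{T^*_{X, -}, j+1}$ is already abelian. Taking $k = j-1$ in the short exact sequence \eqref{rhf973hf93hf8jf0j30} then makes the inclusion $\Ac_{T^*_{X, -}, j} \subset \mathrm G_{T^*_{X, -}, j-1}/\mathrm G_{T^*_{X, -}, j+1}$ automatically central, so the hypothesis of Theorem \ref{fjbvhjdbvhjbdjhd} is met. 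With this in place I would record the straightforward observation that, for a homogeneous $\q \in H^1(X, \Ac_{T^*_{X, -}, j})$, Definition \ref{rgf874f9h39830j39} tells us that $\q$ arises as the obstruction to splitting some supermanifold atlas if and only if $\q$ lies in the image of the linearisation map $\om_{j*} : \mbox{\v H}^1(X, \mathrm G_{T^*_{X, -}, j}) \to H^1(X, \Ac_{T^*_{X, -}, j})$, i.e.\ if and only if the integration problem for $\q$ (Question \ref{rfuguybjbgjbgh4ih4u}) is resolvable.

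With the centrality verified and the terminology reconciled, the forward direction $(\Rightarrow)$ is Proposition \ref{fbiubiiidnono2o} specialised to $\mathrm G_{T^*_{X, -}}$, and is in any case already recorded as Theorem \ref{rfh89hf3jf903jf3}. For the converse I would invoke Theorem \ref{fjbvhjdbvhjbdjhd}: using ${\bf H}_\pt^1(X, \Ac_{T^*_{X, -}}) = (0)$, any $\q$ with $\pt \q = 0$ satisfies $\q = \om_{j*} \dt^1_j \q^\p$ for some $\q^\p \in H^0(X, \Ac_{T^*_{X, -}, j-1})$, exhibiting $\dt^1_j \q^\p$ as a $j$-th order supermanifold atlas whose obstruction to splitting is $\q$.

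I do not anticipate a genuine obstacle: the substantive work has already been done in Theorem \ref{fjbvhjdbvhjbdjhd} and in the construction of the obstruction complex. The only mildly delicate point is the verification of centrality for $\mathrm G_{T^*_{X, -}}$, which however is immediate from the stronger statement of Proposition \ref{cnjbvkjbrvkjneke} (AQ degree $4 \geq 2$). If one wishes to avoid appealing to centrality altogether, the same conclusion can also be read off by combining Theorem \ref{fvnrbvurnvnvkmvkel} with the diagram \eqref{fnuibvurbiuenoinioe} in the proof of Theorem \ref{fjbvhjdbvhjbdjhd}, applied directly to $\mathrm G_{T^*_{X, -}}$.
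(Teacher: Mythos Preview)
Your proposal is correct and matches the paper's approach exactly: the paper's proof is the single sentence ``This is precisely an adaptation of Theorem \ref{fjbvhjdbvhjbdjhd} to the present setting,'' and you have spelled out precisely that adaptation, including the verification (via Proposition \ref{cnjbvkjbrvkjneke}) that $\mathrm G_{T^*_{X,-}}$ is an AQ central series so that Theorem \ref{fjbvhjdbvhjbdjhd} applies.
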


\begin{proof}
This is precisely an adaptation of Theorem \ref{fjbvhjdbvhjbdjhd} to the present setting.
\end{proof}

\noindent
Hence if ${\bf H}_\pt^1(X, \Ac_{T^*_{X, -}}) = (0)$, there will not exist any pseudo-supermanifolds modelled on $(X, T^*_{X, -})$. It is in this sense that ${\bf H}_\pt^1(X, \Ac_{T^*_{X, -}})$ `anti-obstructs' the existence of pseudo-supermanifolds. In what follows we will deduce further relations between the first obstruction cohomology of a model and `goodness' of a model in the sense of \cite{BETTHIGHOBS}.

\subsection{Exotic Atlases and Good Models}
One of the central concepts underpinning much of the work in \cite{BETTHIGHOBS} is the notion of an \emph{exotic atlas}, discussed by Donagi and Witten in \cite{DW1}, leading then to the definition of a `good model'. We present the definition from \cite{BETTHIGHOBS}. 

\begin{DEF}\label{rfuihfh39f8h93hf03}
\emph{A $j$-th order supermanifold atlas is said to be \emph{exotic} if:
\begin{enumerate}[$\bt$]
	\item it is strongly split and;
	\item defines a non-vanishing obstruction to splitting.
\end{enumerate}}
\end{DEF}

\begin{DEF}\label{fnkjnvkubvuinoifoi3fijio3jfiojo}
\emph{A model $(X, T^*_{X, -})$ is said to be \emph{good} if there do not exist any exotic supermanifold atlases modelled on $(X, T^*_{X, -})$.}
\end{DEF}

\noindent
One of the main results in \cite{BETTHIGHOBS} concerned necessary and sufficient conditions for a model to be `good'. Presently, we will see how the first obstruction cohomology of a model is related to `goodness' of the model. We begin with the following existence result.

\begin{PROP}\label{rfh489fh4hf4hf8fj39f}
Let $(X, T^*_{X, -})$ be a model and suppose ${\bf H}_\pt^1(X, \Ac_{T^*_{X, -}}) = (0)$. Then for any element $\q\in H^1(X, \Ac_{T^*_{X, -}, j})$ satisfying $\pt\q=0$, there will exist a $j$-th order supermanifold atlas which is strongly split and realises $\q$ as its obstruction to splitting. 
\end{PROP}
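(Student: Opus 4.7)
The plan is to strengthen the integration argument of Theorem \ref{fjbvhjdbvhjbdjhd} (adapted to the AQ series $\mathrm G_{T^*_{X,-}}$ as in Theorem \ref{rfuihf3hf98h3f89h398fh3}) so as to track the additional strongly-split property of the produced atlas. Since ${\bf H}^1_\pt(X, \Ac_{T^*_{X,-}}) = (0)$ and $\pt\q = 0$, one may write $\q = \pt\q^\p$ for some $\q^\p \in H^0(X, \Ac_{T^*_{X,-}, j-1})$. The factorisation of $\pt$ exhibited in diagram \eqref{fnuibvurbiuenoinioe}, specialised to $\mathrm G_{T^*_{X,-}}$, then presents $\q$ as $\om_{j*}(h)$ where $h := \dt_j^1(\q^\p) \in \mbox{\v H}^1(X, \mathrm G_{T^*_{X,-}, j})$. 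By definition, $h$ is then a $j$-th order supermanifold atlas realising $\q$ as its obstruction to splitting, settling the existence and order portion of the statement.

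The main step, and the heart of the proposition, is verifying \emph{strong} splitness in the sense of Definition \ref{ruieff98fh83f309jf3jf4rf}, i.e., that $\tau_{j*}(h)$ coincides with the basepoint of $\mbox{\v H}^1(X, \Gc_{T^*_{X,-}})$. The crucial observation is that $\dt_j^1$ is the connecting map of the nonabelian long exact sequence induced by the short exact sequence of sheaves of groups
\[
\{e\} \lra \mathrm G_{T^*_{X,-}, j} \lra \mathrm G_{T^*_{X,-}, j-1} \lra \Ac_{T^*_{X,-}, j-1} \lra \{e\}.
\]
By exactness, the image of $\dt_j^1$ lies in the kernel of the next map $\mbox{\v H}^1(X, \mathrm G_{T^*_{X,-}, j}) \to \mbox{\v H}^1(X, \mathrm G_{T^*_{X,-}, j-1})$, so the class of $h$ at level $j-1$ is already the basepoint. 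The inclusion $\mathrm G_{T^*_{X,-}, j-1} \hookrightarrow \Gc_{T^*_{X,-}}$ factorises $\tau_{j*}$ as $\mbox{\v H}^1(X, \mathrm G_{T^*_{X,-}, j}) \to \mbox{\v H}^1(X, \mathrm G_{T^*_{X,-}, j-1}) \to \mbox{\v H}^1(X, \Gc_{T^*_{X,-}})$, and since morphisms of pointed sets preserve basepoints one concludes $\tau_{j*}(h) = \{e\}$, which is strong splitness. The key technical point to verify along the way is that $\dt_j^1$ appearing in \eqref{fnuibvurbiuenoinioe} really does coincide with the connecting map of the above short exact sequence; this follows from naturality applied to the obvious map comparing the present sequence with the three-term sequence $0 \to \Ac_{T^*_{X,-}, j} \to \mathrm G_{T^*_{X,-}, j-1}/\mathrm G_{T^*_{X,-}, j+1} \to \Ac_{T^*_{X,-}, j-1} \to 0$ used to define $\pt$.
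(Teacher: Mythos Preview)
Your proof is correct and follows essentially the same route as the paper's own argument: write $\q=\pt\q^\p$, set $h=\dt_j^1(\q^\p)$ so that $\om_{j*}(h)=\q$, then use exactness of the long sequence associated to $\{e\}\to\mathrm G_{T^*_{X,-},j}\to\mathrm G_{T^*_{X,-},j-1}\to\Ac_{T^*_{X,-},j-1}\to\{e\}$ to see that $h$ maps to the basepoint in $\mbox{\v H}^1(X,\mathrm G_{T^*_{X,-},j-1})$, and conclude strong splitness by factoring $\tau_{j*}$ through level $j-1$. The extra naturality remark you include, identifying $\dt_j^1$ with the relevant connecting map, is a useful clarification that the paper leaves implicit.
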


\begin{proof}
Assuming ${\bf H}_\pt^1\big(X, \Ac_{T^*_{X, -}}\big) = (0)$ we can equate,
\begin{align*}
\ker\big\{ \pt: H^1\big(X, \Ac_{T^*_{X, -}}\big)\ra~&H^2\big(X, \Ac_{T^*_{X, -}}[1]\big)\big\}
\\
&=
\img\big\{\pt: H^0\big(X, \Ac_{T^*_{X, -}}[-1]\big)\ra H^1\big(X, \Ac_{T^*_{X, -}}\big)\big\}.
\end{align*}
Hence if $\pt\q =0$ we can write $\q = \pt\q^\p$, for some $\q^\p\in H^0\big(X,  \Ac_{T^*_{X, -}, j-1}\big)$. We reproduce the commutative diagram in \eqref{fnuibvurbiuenoinioe} below:
\begin{align}
\xymatrix{
& \mbox{\v H}^1\big(X, \mathrm G_{T^*_{X, -}, j}\big)\ar[dr] &
\\
\ar[ur]^\dt H^0\big(X, \Ac_{T^*_{X, -}, j-1}\big) \ar[rr]_\pt & & H^1\big(X, \Ac_{T^*_{X, -},j}\big)
}
\label{Prfg84gf848f7h4fh9}
\end{align}
Hence with $\pt\q= 0$ giving $\q =\pt \q^\p$, commutativity of \eqref{Prfg84gf848f7h4fh9} shows that $\q$ will be the obstruction to splitting the atlas $\dt\q^\p$. Now note that $\dt$ fits into the following exact sequence of pointed sets,
\[
\ldots
\lra
H^0\big(X, \Ac_{T^*_{X, -}, j-1}\big)
\stackrel{\dt}{\lra} 
\mbox{\v H}^1\big(X, \mathrm G_{T^*_{X, -}, j}\big)
\stackrel{\iota_*}{\lra} 
\mbox{\v H}^1\big(X, \mathrm G_{T^*_{X, -}, j-1}\big)
\lra\cdots
\]
Therefore $\iota_*\dt\q^\p = \{e\}$. Now for any $j$ the following diagram will commute,
\begin{align}
\xymatrix{
\ar[dr]_{\tau_{j*}} \mbox{\v H}^1\big(X, \mathrm G_{T^*_{X, -}, j}\big)\ar[rr]^{\iota_*} & & \mbox{\v H}^1\big(X, \mathrm G_{T^*_{X, -}, j-1}\big)\ar[dl]^{\tau_{j-1*}}
\\
& \mbox{\v H}^1\big(X, \Gc_{T^*_{X, -}}\big) &
}
\label{brtgtyfhf7hfhfoij3}
\end{align}
Since $\iota_*\dt\q^\p = \{e\}$ commutativity of \eqref{brtgtyfhf7hfhfoij3} implies $\tau_{j*}\dt\q^\p = \{e\}$, further implying $\dt\q^\p$ is strongly split.
\end{proof}

\noindent
A corollary now is the following relation between the obstruction space and obstruction cohomology, resulting from `goodness' of a model.

\begin{THM}\label{rfg7gfi3hfiuwhfioh}
Let $(X, T^*_{X, -})$ be a good model. Then ${\bf H}_\pt^1(X, \Ac_{T^*_{X, -}}) = (0)$ if and only if $\pt: H^1\big(X, \Ac_{T^*_{X, -}}\big) \ra H^2\big(X, \Ac_{T^*_{X, -}}[1]\big)$ is injective.
\end{THM}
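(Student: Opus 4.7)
The statement is an equivalence, and one direction is essentially automatic. The plan is to handle the easy direction first and then reduce the nontrivial direction to the existence result already established in Proposition \ref{rfh489fh4hf4hf8fj39f} combined with the definition of a good model.

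For the implication ``$\pt$ injective $\Rightarrow {\bf H}_\pt^1 = (0)$'', no hypothesis of goodness is needed. Since the primary complex is genuinely a complex (Proposition \ref{rfbyuef8f9h39h3}), the image of $\pt : H^0(X, \Ac_{T^*_{X, -}}[-1]) \to H^1(X, \Ac_{T^*_{X, -}})$ is contained in the kernel of $\pt : H^1(X, \Ac_{T^*_{X, -}}) \to H^2(X, \Ac_{T^*_{X, -}}[1])$. If the latter kernel is trivial then so is the former image, and the degree-one cohomology of the primary complex vanishes.

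For the converse direction, assume ${\bf H}_\pt^1(X, \Ac_{T^*_{X, -}}) = (0)$; I will show that any $\q \in \ker\{\pt : H^1(X, \Ac_{T^*_{X, -}}) \to H^2(X, \Ac_{T^*_{X, -}}[1])\}$ must vanish. By vanishing of the primary cohomology, such a $\q$ can be written $\q = \pt\q^\p$ for some $\q^\p \in H^0(X, \Ac_{T^*_{X, -}}[-1])$. Now I invoke Proposition \ref{rfh489fh4hf4hf8fj39f}: the element $\dt\q^\p$ is a $j$-th order supermanifold atlas whose obstruction to splitting is exactly $\pt\q^\p = \q$, and the argument running through the commutative diagrams \eqref{fnuibvurbiuenoinioe} and \eqref{brtgtyfhf7hfhfoij3} in that proof shows that $\dt\q^\p$ is strongly split.

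The key step is then to apply goodness. By Definition \ref{fnkjnvkubvuinoifoi3fijio3jfiojo}, a good model admits no exotic atlas, which by Definition \ref{rfuihfh39f8h93hf03} means that no strongly split atlas has non-vanishing obstruction to splitting. Since $\dt\q^\p$ is strongly split, its obstruction $\q$ must therefore vanish. This proves $\ker\{\pt: H^1(X, \Ac_{T^*_{X, -}}) \to H^2(X, \Ac_{T^*_{X, -}}[1])\} = (0)$, which is precisely the injectivity of $\pt$. The main obstacle here is purely conceptual rather than technical: one needs to recognise that goodness converts the strong splittability produced by Proposition \ref{rfh489fh4hf4hf8fj39f} into the vanishing of the obstruction class itself, and the whole argument is essentially a bookkeeping exercise once that observation is made.
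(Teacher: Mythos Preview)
Your proof is correct and follows essentially the same route as the paper's: both directions are handled identically, with the nontrivial implication reduced to Proposition \ref{rfh489fh4hf4hf8fj39f} and the observation that a strongly split atlas with nonzero obstruction would be exotic, contradicting goodness. The paper's version is slightly terser but the logic is the same.
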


\begin{proof}
If $\pt: H^1\big(X, \Ac_{T^*_{X, -}}\big) \ra H^2\big(X, \Ac_{T^*_{X, -}}[1]\big)$ is injective, then ${\bf H}_\pt^1(X, \Ac_{T^*_{X, -}}) = (0)$. Conversely, with $(X, T^*_{X, -})$ a good model, suppose ${\bf H}_\pt^1(X, \Ac_{T^*_{X, -}}) = (0)$. Then by Proposition \ref{rfh489fh4hf4hf8fj39f} we know that for any homogeneous element $\q$ in the obstruction space with $\pt\q = 0$, there will exist a strongly split supermanifold atlas realising $\q$ as its obstruction to splitting. If $\q\neq0$, this supermanifold atlas will be exotic by Definition \ref{rfuihfh39f8h93hf03}, contradicting our assumption that $(X, T^*_{X, -})$ is a good model. Hence if $\pt\q = 0$, it must be the case that $\q = 0$. Since $\pt$ is a linear map between vector spaces, this condition means $\pt$ is injective.
\end{proof}

\noindent
On a Riemann surface then we have the corollary.

\begin{COR}\label{fjcdkbvjdbvjhbvrbvbi}
Let $(X, T^*_{X, -})$ be a good model with $X$ a Riemann surface. Then ${\bf H}_\pt^1(X, \Ac_{T^*_{X, -}}) = (0)$ if and only if $H^1\big(X, \Ac_{T^*_{X, -}}\big) = (0)$.
\end{COR}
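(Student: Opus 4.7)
The plan is to deduce the corollary directly from Theorem \ref{rfg7gfi3hfiuwhfioh} combined with the standard dimensional vanishing of coherent analytic sheaf cohomology on a Riemann surface. First I would invoke Theorem \ref{rfg7gfi3hfiuwhfioh} to reduce the question, in the good-model setting, to characterising when the linear map $\pt : H^1(X, \Ac_{T^*_{X,-}}) \to H^2(X, \Ac_{T^*_{X,-}}[1])$ is injective.

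Next I would argue that $H^2(X, \Ac_{T^*_{X,-}}[1]) = (0)$. By construction, $\Ac_{T^*_{X,-}}$ decomposes as a direct sum of the successive quotients $\mathrm G_{T^*_{X,-}, k}/\mathrm G_{T^*_{X,-}, k+1}$ coming from Green's filtration, and each such quotient is a coherent analytic sheaf on $X$, as is read off Green's explicit description in \cite{GREEN}. Since $X$ has complex dimension one, coherent analytic sheaves on $X$ have vanishing cohomology in all degrees $\geq 2$; thus every graded component of $H^2(X, \Ac_{T^*_{X,-}}[1])$ is zero, and summing gives $H^2(X, \Ac_{T^*_{X,-}}[1]) = (0)$.

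With the target of $\pt$ equal to the zero vector space, the map $\pt$ in Theorem \ref{rfg7gfi3hfiuwhfioh} reduces to the zero map on $H^1(X, \Ac_{T^*_{X,-}})$, and a linear map into $(0)$ is injective if and only if its domain itself vanishes. Feeding the resulting equivalence
\[
\pt\ \text{injective}\quad \Longleftrightarrow\quad H^1(X, \Ac_{T^*_{X,-}}) = (0)
\]
back into Theorem \ref{rfg7gfi3hfiuwhfioh} then yields the stated corollary.

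The only step requiring any genuine verification is the coherence of the graded pieces of the linearisation, which is what permits the one-dimensional cohomological vanishing to be applied. This is routine bookkeeping from Green's formulas for the quotients $\mathrm G_{T^*_{X,-}, k}/\mathrm G_{T^*_{X,-}, k+1}$, and so is not, in my view, a real obstacle; the essential content of the corollary lies entirely in Theorem \ref{rfg7gfi3hfiuwhfioh}, of which this is a dimensional specialisation.
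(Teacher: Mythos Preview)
Your proposal is correct and follows essentially the same approach as the paper: invoke Theorem \ref{rfg7gfi3hfiuwhfioh} and then use the vanishing of $H^2$ on a Riemann surface to force injectivity of $\pt$ to be equivalent to $H^1\big(X, \Ac_{T^*_{X,-}}\big) = (0)$. If anything, your justification for the $H^2$-vanishing via coherence of the graded pieces is more careful than the paper's one-line appeal to ``dimensional reasons.''
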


\begin{proof}
For dimensional reasons $H^2(X, \Fc) = (0)$ for \emph{any} abelian sheaf $\Fc$ on a Riemann surface $X$. 
\end{proof}

\noindent
Hence, on a Riemann surface, if the obstruction space of a good model does not vanish, then neither can its first obstruction cohomology.

\section{Applications to Supermanifolds II: Lift, Project, Split}
\label{fcbeuyvyegv8g79g93h93}

\subsection{Berezin's Lifting Theorem and Uniqueness}

\subsubsection{Existence}
We recall here the notions in Definition \ref{rgf874f9h39830j39} and the subsequent discussion there. Fix a model $(X, T^*_{X,-})$. Any supermanifold atlas $(\Uc\ra\Xc)$ modelled on $(X, T^*_{X, -})$ will define an obstruction to splitting. If $(\Uc\ra\Xc)$ has order $j$, it will define an obstruction to splitting in the $j$-th graded component of the obstruction space $H^1\big(X, \Ac_{T^*_{X, -}}\big)$. If this obstruction vanishes, then $(\Uc\ra\Xc)$ will admit a $(j+1)$-th order structure. This last statement is based on the observation that the following sequence of pointed sets is exact:\footnote{recall that it is a piece of the long exact sequence on cohomology induced from the short exact sequence of sheaves $\{e\}\ra \mathrm G_{T^*_{X, -}, j+1}\ra\mathrm G_{T^*_{X, -}, j}\ra \Ac_{T^*_{X, -}, j}\ra\{e\}$.}
\[
\mbox{\v H}^1\big(X, \mathrm G_{T^*_{X, -}, j+1}\big)
\lra 
\mbox{\v H}^1\big(X, \mathrm G_{T^*_{X, -}, j}\big)
\stackrel{\om_{j*}}{\lra}
H^1\big(X, \Ac_{T^*_{X, -}, j}\big).
\]
Berezin observed (see \cite[pp. 164-5]{BER}) that when $j$ is \emph{even}, any $j$-th order supermanifold atlas with vanishing obstruction will admit a \emph{$(j+2)$-th order structure}. In terms of cohomology, Berezin's observation can be formulated as follows. Firstly note that for any $j$ we have the following diagram of sheaves:
\begin{align}
\xymatrix{
& & \Ac_{T^*_{X,-}, j+1}\ar[d]
\\
\mathrm G_{T^*_{X, -}, j+2} \ar[r] &\ar@{=}[d] \mathrm G_{T^*_{X, -}, j} \ar[r] & \mathrm G_{T^*_{X, -}, j} /\mathrm G_{T^*_{X, -}, j+2} \ar[d]
\\
& \mathrm G_{T^*_{X, -}, j} \ar[r] & \Ac_{T^*_{X, -}, j}
}
\label{rfh37f973h8309j955g5}
\end{align}
where the middle row and the right-most column are short exact sequences. From \eqref{rfh37f973h8309j955g5} we get the following diagram on $1$-cohomology,
\begin{align}
\xymatrix{
& & H^1\big(X, \Ac_{T^*_{X,-}, j+1}\big)\ar[d]^{\iota_{j*}}
\\
 \mbox{\v H}^1\big(X, \mathrm G_{T^*_{X, -}, j+2}\big) \ar[r]^{\tau_{~j*}^{j+2}} & \mbox{\v H}^1\big(X, \mathrm G_{T^*_{X, -}, j}\big)\ar@{=}[d] \ar[r] & H^1\big(X, \mathrm G_{T^*_{X, -}, j}/ \mathrm G_{T^*_{X, -}, j+2}\big)\ar[d]^{p_j*} 
\\
 & \mbox{\v H}^1\big(X, \mathrm G_{T^*_{X, -}, j}\big) \ar[r]^{\om_{j*}} & H^1\big(X, \Ac_{T^*_{X, -}, j}\big)
}
\label{dkldnckjbdkeybe}
\end{align}
When $j=2\ell$ is even, Berezin's observation is:

\begin{THM}\label{hyegh9fh389f89j03j93k}
To any $x\in \mbox{\emph{\v H}}^1\big(X, \mathrm G_{T^*_{X, -}, 2\ell}\big)$ with $\om_{2\ell*}(x) = 0$, there exists $x^\p\in \mbox{\emph{\v H}}^1\big(X, \mathrm G_{T^*_{X, -}, 2\ell+2}\big)$ such that $\tau_{~2\ell*}^{2\ell+2}: x^\p\mapsto x$.\qed
\end{THM}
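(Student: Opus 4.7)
The plan is to verify the lift by a diagram chase on~\eqref{dkldnckjbdkeybe}, reducing to a single cohomological vanishing statement. By Proposition~\ref{cnjbvkjbrvkjneke}, $\mathrm G_{T^*_{X,-}}$ has AQ degree $4$, so $\mathrm G_{T^*_{X,-},2\ell}/\mathrm G_{T^*_{X,-},2\ell+2}$ is a sheaf of abelian groups; consequently the right column of~\eqref{rfh37f973h8309j955g5} is a short exact sequence of abelian sheaves, and the right column of~\eqref{dkldnckjbdkeybe} is a piece of a long exact sequence in abelian cohomology.

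Let $\bar x \in H^1\bigl(X,\mathrm G_{T^*_{X,-},2\ell}/\mathrm G_{T^*_{X,-},2\ell+2}\bigr)$ denote the image of $x$. By exactness of the middle row in~\eqref{dkldnckjbdkeybe}, $x$ admits the desired lift $x'$ precisely when $\bar x$ is the basepoint. Commutativity of the lower-right square together with the hypothesis $\om_{2\ell\,*}(x) = 0$ yields $p_{j*}(\bar x) = 0$, so by exactness of the right column there exists a unique $z \in H^1\bigl(X,\Ac_{T^*_{X,-},2\ell+1}\bigr)$ with $\iota_{j*}(z) = \bar x$; uniqueness holds because the abelian extension $\Ac_{T^*_{X,-},2\ell+1} \to \mathrm G_{T^*_{X,-},2\ell}/\mathrm G_{T^*_{X,-},2\ell+2} \to \Ac_{T^*_{X,-},2\ell}$ splits canonically via the exponential / Campbell--Baker--Hausdorff correspondence, identifying the middle term with the direct sum $\Ac_{T^*_{X,-},2\ell}\oplus\Ac_{T^*_{X,-},2\ell+1}$. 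The problem thus reduces to showing $z = 0$.

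The main obstacle, and the place where the hypothesis that $j = 2\ell$ is \emph{even} is essential, is the verification $z = 0$. At the cocycle level a representative $\{g_{ij}\}$ of $x$ admits, to leading order modulo $J^{2\ell+2}$, a decomposition of $g_{ij} - \id$ into a derivation $\Oc_X \to \wedge^{2\ell}T^*_{X,-}$ representing $\om_{2\ell\,*}(x)$ and an $\Oc_X$-linear map $T^*_{X,-} \to \wedge^{2\ell+1}T^*_{X,-}$ representing $z$; no Campbell--Baker--Hausdorff cross term arises at this degree when $\ell \geq 1$. A coboundary modification of $\{g_{ij}\}$ by $\{h_i\}$ with $h_i - \id$ of the same leading order then splits into two independent adjustments, one killing the primary (using the hypothesis) and a second acting only on $z$. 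Berezin's parity-rigidity observation~\cite[pp.~164--165]{BER} is that, once the primary has been trivialised, the residual odd-degree class $z$ is automatically realised as a \v Cech coboundary in $\Ac_{T^*_{X,-},2\ell+1}$, so the independently adjustable $\Oc_X$-linear piece of the modification suffices to trivialise it. Making this parity-rigidity statement precise in the present sheaf-theoretic setting is the hardest step; once it is in hand the diagram chase closes to produce the required $x'$.
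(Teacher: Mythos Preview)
The paper does not prove this theorem: it is stated as Berezin's observation, with a reference to \cite[pp.~164--165]{BER}, and closed with a bare \qed. So there is no argument in the paper to compare yours against; I can only assess whether your proposal constitutes an independent proof.

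Your diagram chase on~\eqref{dkldnckjbdkeybe} is set up correctly: $x$ lifts to $\mbox{\v H}^1(X,\mathrm G_{T^*_{X,-},2\ell+2})$ if and only if its image $\bar x$ in $H^1(X,\mathrm G_{T^*_{X,-},2\ell}/\mathrm G_{T^*_{X,-},2\ell+2})$ is trivial, and the hypothesis $\om_{2\ell*}(x)=0$ together with exactness of the right column forces $\bar x$ into the image of $\iota_{j*}$. Two remarks on the details. First, the claim that the abelian extension $\Ac_{T^*_{X,-},2\ell+1}\to\mathrm G_{T^*_{X,-},2\ell}/\mathrm G_{T^*_{X,-},2\ell+2}\to\Ac_{T^*_{X,-},2\ell}$ splits \emph{canonically} is not substantiated: an element of the middle term is a pair $(D,E)$ with $D\colon\Oc_X\to\wedge^{2\ell}T^*_{X,-}$ a derivation and $E\colon T^*_{X,-}\to\wedge^{2\ell+1}T^*_{X,-}$ satisfying $E(f\theta)=fE(\theta)+D(f)\,\theta$, and there is no evident natural section $D\mapsto(D,E_D)$. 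In particular your assertion that the degree-$(2\ell+1)$ component of $g_{ij}-\id$ is already $\Oc_X$-linear before the primary is trivialised is not correct. None of this is fatal to the reduction---abelian exactness alone produces \emph{some} $z$ with $\iota_{j*}(z)=\bar x$, and what you actually need is $\bar x=0$, not uniqueness of $z$.

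The substantive gap is the step you yourself flag as ``the hardest'': the vanishing of $z$. Your justification is to invoke ``Berezin's parity-rigidity observation~\cite[pp.~164--165]{BER}'', which is exactly the citation the paper uses for the entire theorem. So the proposal does not go beyond the paper's own treatment: both defer the content to Berezin. The cocycle-level sketch you give---that coboundary modifications act independently on the even and odd pieces---only shows that one can adjust $D$ and $E$ separately; it gives no mechanism by which killing $D_{ij}$ forces $(E_{ij})$ to become a coboundary. A complete argument would need to explain, from the structure of $\Zbb_2$-graded automorphisms of $\wedge^\bullet T^*_{X,-}$, why the residual odd class is automatically trivial once the even primary obstruction is.
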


\subsubsection{Uniqueness}
To $x\in \mbox{\v H}^1\big(X, \mathrm G_{T^*_{X, -}, 2\ell}\big)$ if we think of $x$ as a thickening of order $2\ell$, to use the language in \cite{BETTOBSTHICK}, then Berezin in \cite[pp. 163-4]{BER} argues that $x$ will admit a unique, first order extension. Note that this is regardless of whether $\om_{2\ell*}(x)$ vanishes or not. When $\om_{2\ell*}(x) = 0$, Proposition \ref{hyegh9fh389f89j03j93k} guarantees the existence of a lift $\tau_{~2\ell*}^{2\ell+2}: x^\p\mapsto x$. It is the uniqueness now of this lift $x^\p$ which will concern us here. 
To present the result firstly recall that the right-most column in \eqref{rfh37f973h8309j955g5} is a short exact sequence and leads, on cohomology, to the following left exact piece:
\begin{align}
\{e\}
\ra 
H^0\big(X, \Ac_{T^*_{X, -}, j+1}\big)
\lra
H^0\big(X, \mathrm G_{T^*_{X, -}, j}&/ \mathrm G_{T^*_{X, -}, j+2}\big)
\label{rniurui4f3hf083f09j30ereree}
\\
\notag&
\stackrel{H^0(p_{j})}{\lra}
H^0\big(X, \Ac_{T^*_{X, -}, j}\big).
\end{align}
Hence $H^0\big(X, \Ac_{T^*_{X, -}, j+1}\big)$ is a subgroup of $H^0\big(X, \mathrm G_{T^*_{X, -}, j}/ \mathrm G_{T^*_{X, -}, j+2}\big)$ for each $j$. By Proposition \ref{cnjbvkjbrvkjneke} we know that $\mathrm G_{T^*_{X, -}, j}/ \mathrm G_{T^*_{X, -}, j+2}$ is abelian. Hence we can form the quotient on cohomology, giving
\[
\img~H^0(p_j) \cong H^0\big(X, \mathrm G_{T^*_{X, -}, j}/ \mathrm G_{T^*_{X, -}, j+2}\big)/ H^0\big(X, \Ac_{T^*_{X, -}, j+1}\big).
\]
In the case where $j = 2\ell$, we have:

\begin{THM}
\label{rfh48gh4hg4040gj490j}
Let $x\in \mbox{\emph{\v H}}^1\big(X, \mathrm G_{T^*_{X, -}, 2\ell}\big)$ and suppose $\om_{2\ell*}(x) = 0$. Then there exists a unique (i.e., one and only one) lift $\tau_{~2\ell*}^{2\ell+2}: x^\p\mapsto x$ if and only if $\img~H^0(p_{2\ell})$ is trivial.
\end{THM}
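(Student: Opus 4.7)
The plan is to study the fiber of $\tau_{~2\ell*}^{2\ell+2}$ over $x$ through the long exact sequence of pointed sets associated to the short exact sequence of sheaves of groups
\[
\{e\}\lra \mathrm G_{T^*_{X,-}, 2\ell+2} \lra \mathrm G_{T^*_{X,-}, 2\ell} \lra Q \lra \{e\},
\]
writing $Q \stackrel{\Delta}{=} \mathrm G_{T^*_{X,-}, 2\ell}/\mathrm G_{T^*_{X,-}, 2\ell+2}$, a sheaf of \emph{abelian} groups by Proposition \ref{cnjbvkjbrvkjneke}. By hypothesis the fiber is non-empty (Theorem \ref{hyegh9fh389f89j03j93k}), so the question reduces to counting elements of the fiber.

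First, I would work at the \v{C}ech level. Passing to a common refinement, fix representatives $h^1_{ij}, h^2_{ij}\in \mathrm G_{T^*_{X,-}, 2\ell+2}$ for two lifts $x^\p_1, x^\p_2$ of $x$. Writing $h^k_{ij} = \mu^k_i g_{ij}(\mu^k_j)^{-1}$ for a cocycle $g_{ij}$ representing $x$ (chosen in $\mathrm G_{T^*_{X,-}, 2\ell+1}$ using $\om_{2\ell*}(x)=0$) and $\mu^k_i\in \mathrm G_{T^*_{X,-}, 2\ell}$, the $0$-cochain $\rho_i := \mu^2_i(\mu^1_i)^{-1}$ satisfies $h^2_{ij} = \rho_i h^1_{ij}\rho_j^{-1}$ with coboundary valued in $\mathrm G_{T^*_{X,-}, 2\ell+2}$. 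Since $Q$ is abelian, the images $\bar\rho_i\in Q$ are forced to agree on overlaps, producing a global section $\bar\rho\in H^0(X, Q)$. A direct check shows that $x^\p_1 = x^\p_2$ in $\mbox{\v H}^1(X, \mathrm G_{T^*_{X,-}, 2\ell+2})$ iff $\bar\rho$ lies in the image of $H^0(X, \mathrm G_{T^*_{X,-}, 2\ell})\to H^0(X, Q)$. Consequently the fiber of $\tau_{~2\ell*}^{2\ell+2}$ over $x$ is in bijection with the cokernel $H^0(X, Q)/\mathrm{image}(H^0(X, \mathrm G_{T^*_{X,-}, 2\ell}))$, and uniqueness of the lift is equivalent to the surjectivity of $H^0(X, \mathrm G_{T^*_{X,-}, 2\ell})\to H^0(X, Q)$.

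Second, the exact sequence \eqref{rniurui4f3hf083f09j30ereree} derived from the right column of \eqref{rfh37f973h8309j955g5} supplies $\ker H^0(p_{2\ell}) = H^0(X, \Ac_{T^*_{X,-}, 2\ell+1})$ and hence the identification $\mathrm{image}\,H^0(p_{2\ell}) \cong H^0(X, Q)/H^0(X, \Ac_{T^*_{X,-}, 2\ell+1})$, so that triviality of $\mathrm{image}\,H^0(p_{2\ell})$ is equivalent to the equality $H^0(X, Q) = H^0(X, \Ac_{T^*_{X,-}, 2\ell+1})$ as subgroups of $H^0(X, Q)$.

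The main obstacle, and the crux of the proof, is to bridge the two characterisations by identifying the image of $H^0(X, \mathrm G_{T^*_{X,-}, 2\ell})\to H^0(X, Q)$ with the subgroup $H^0(X, \Ac_{T^*_{X,-}, 2\ell+1})$. The inclusion ``image $\subseteq H^0(X, \Ac_{T^*_{X,-}, 2\ell+1})$'' amounts to the vanishing of the composition $H^0(X, \mathrm G_{T^*_{X,-}, 2\ell})\to H^0(X, Q)\stackrel{H^0(p_{2\ell})}{\lra} H^0(X, \Ac_{T^*_{X,-}, 2\ell})$; this is precisely the parity/evenness input that underlies Berezin's Theorem \ref{hyegh9fh389f89j03j93k} (the leading symbol in even degree $2\ell$ of a global element of $\mathrm G_{T^*_{X,-}, 2\ell}$ is forced to be trivial). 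The reverse inclusion proceeds by lifting sections of $\Ac_{T^*_{X,-}, 2\ell+1}$ along the surjection $\mathrm G_{T^*_{X,-}, 2\ell+1}\twoheadrightarrow \Ac_{T^*_{X,-}, 2\ell+1}$, composed with the inclusion $\mathrm G_{T^*_{X,-}, 2\ell+1}\subset \mathrm G_{T^*_{X,-}, 2\ell}$; any obstruction to such a lift lies in $\mbox{\v H}^1(X, \mathrm G_{T^*_{X,-}, 2\ell+2})$ and can be absorbed by modifying the reference lift $x^\p$. Combining these two inclusions identifies the cokernels and yields the stated equivalence.
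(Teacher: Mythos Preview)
Your overall strategy — parametrise the fibre of $\tau_{~2\ell*}^{2\ell+2}$ by a quotient of $H^0(X,Q)$ and then compare with $H^0(X,\Ac_{T^*_{X,-},2\ell+1})$ — is reasonable, but the two ``bridging'' inclusions at the end both fail as written, and this is where the real content of the theorem lies.

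For the inclusion $\mathrm{image}\big(H^0(X,\mathrm G_{T^*_{X,-},2\ell})\to H^0(X,Q)\big)\subseteq H^0(X,\Ac_{T^*_{X,-},2\ell+1})$: this amounts to saying that every global section of $\mathrm G_{T^*_{X,-},2\ell}$ has trivial leading symbol in $\Ac_{T^*_{X,-},2\ell}\cong\wedge^{2\ell}T^*_{X,-}\otimes T_X$. There is no parity reason for this; Berezin's observation (Theorem~\ref{hyegh9fh389f89j03j93k}) is a statement about \v Cech $1$-cocycles, not about global sections, and does not force the even leading term of a global automorphism to vanish. So this inclusion is simply false in general. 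For the reverse inclusion $H^0(X,\Ac_{T^*_{X,-},2\ell+1})\subseteq \mathrm{image}$: the obstruction to lifting a global section of $\Ac_{T^*_{X,-},2\ell+1}$ to $\mathrm G_{T^*_{X,-},2\ell+1}$ is precisely the boundary map $\dt_{2\ell+2}:H^0(X,\Ac_{T^*_{X,-},2\ell+1})\to\mbox{\v H}^1(X,\mathrm G_{T^*_{X,-},2\ell+2})$, and its vanishing is the content of the Vanish--Lift--Vanish principle (Theorem~\ref{dkcdnlnvdvbebvukenvl}), a nontrivial result proved via the $\Cbb^\times$-dilation argument in Appendix~\ref{fklnvkbvjrbvnfkemlkmel}. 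Your remark that the obstruction ``can be absorbed by modifying the reference lift $x^\p$'' is not a valid substitute: the image of $H^0(X,\mathrm G_{T^*_{X,-},2\ell})$ in $H^0(X,Q)$ is intrinsic and does not depend on $x^\p$. A secondary issue is that your fibre computation should produce $H^0(X,Q)$ modulo the image of $H^0$ of the $h^1$-\emph{twisted} form of $\mathrm G_{T^*_{X,-},2\ell}$, not of the untwisted sheaf; the two agree over the base-point but not a priori over a general $x$.

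The paper sidesteps all of this. It does not attempt to identify the image of $H^0(X,\mathrm G_{T^*_{X,-},2\ell})$ in $H^0(X,Q)$. Instead, it uses Theorem~\ref{dkcdnlnvdvbebvukenvl} together with Lemma~\ref{ciuevuyegi3g89h30j9f} to show that the action of the subgroup $H^0(X,\Ac_{T^*_{X,-},2\ell+1})\subset H^0(X,Q)$ on all of $\mbox{\v H}^1(X,\mathrm G_{T^*_{X,-},2\ell+2})$ is trivial. When $\img H^0(p_{2\ell})=0$ this subgroup is the whole of $H^0(X,Q)$, so the $H^0(X,Q)$-action is trivial and Lemma~\ref{rfh794hf98hf80j309j30} gives uniqueness of the lift. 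The essential missing ingredient in your argument is precisely this Vanish--Lift--Vanish input.
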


\noindent
Our proof of Theorem \ref{rfh48gh4hg4040gj490j} will involve another lifting phenomenon which was observed by Donagi and Witten in \cite{DW1}. We present this result in the section to follow and so our proof of Theorem \ref{rfh48gh4hg4040gj490j} will appear there.

\subsection{Vanish, Lift and Vanish}

\subsubsection{Donagi and Witten's Vanishing}
We will firstly set up some notation. Fix a model $(X, T^*_{X, -})$ with obstruction sheaf $\Ac_{T^*_{X, -}}$. Recall that it is $\Zbb$-graded. Hence it will be $\Zbb_2$-graded. Let $\Ac_{T^*_{X, -}, +}$ and $\Ac_{T^*_{X, -}, -}$ denote its even and odd graded components respectively. Then as $\Oc_X$-modules,
\begin{align*}
 \Ac_{T^*_{X, -}, +}
 =
 \bigoplus_{j>1}  \Ac_{T^*_{X, -}, 2j}
 &&
 \mbox{and}
 &&
 \Ac_{T^*_{X, -}; -}
 =
 \bigoplus_{j>0}  \Ac_{T^*_{X, -}, 2j+1}.
\end{align*}
The boundary maps in the obstruction complex of $(X, T^*_{X, -})$ increase the $\Zbb$-degree of the obstruction sheaf by one, i.e., 
\[
\pt^{n+1}_{j} : H^n\big(X, \Ac_{T^*_{X,-}, j}\big)
\lra 
 H^{n+1}\big(X, \Ac_{T^*_{X,-}, j+1}\big).
\]
Therefore $\pt$ interpolates between the even and odd graded components of the obstruction sheaf. This means, for each $n$, we have:
\begin{align}
\pt^{n+1}_{\pm} : 
H^n\big(X, \Ac_{T^*_{X,-}, \pm}\big)
\lra 
 H^{n+1}\big(X, \Ac_{T^*_{X,-}, \mp}\big)
\label{cnuiciuhehf893hf8hf3}
\end{align}
We will refer to $\pt^{n+1}_+$ resp. $\pt^{n+1}_-$ as the \emph{even} resp. \emph{odd} components of $\pt^{n+1}$. Donagi and Witten in \cite{DW1} observed:

\begin{PROP}\label{rfh89hf983hf893f93}
To any model $(X, T^*_{X, -})$ the odd component $\pt^1_-$ of $\pt^1$ in \eqref{cnuiciuhehf893hf8hf3} vanishes identically.\end{PROP}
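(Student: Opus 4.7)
The plan is to identify $\pt^1_-$ with a collection of connecting maps coming from short exact sequences that split canonically at the level of sheaves of abelian groups. Explicitly, by the construction in the proof of Theorem \ref{fbvuyrbvybriuvoievpie}, the $(2\ell+2)$-component of $\pt^1$ is the boundary arising from
\begin{align*}
\{e\} \lra \Ac_{T^*_{X,-}, 2\ell+2} \lra \mathrm G_{T^*_{X,-}, 2\ell+1}/\mathrm G_{T^*_{X,-}, 2\ell+3} \lra \Ac_{T^*_{X,-}, 2\ell+1} \lra \{e\},
\end{align*}
for each $\ell\geq 1$ (which is the range where $\Ac_-$ and $\Ac_+$ are supported). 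A split short exact sequence of sheaves of abelian groups has vanishing connecting map in cohomology, so producing such a splitting would immediately yield $\pt^1_- = 0$.

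To construct the splitting I would appeal to the description of $\Ac_{T^*_{X, -}, 2\ell+1}$ from Green's analysis of $\mathrm G_{T^*_{X, -}}$: a local section of $\Ac_{T^*_{X, -}, 2\ell+1}$ is given by an $\Oc_X$-linear map $\phi: T^*_{X, -}\lra \wedge^{2\ell+1}T^*_{X,-}$. To each such $\phi$ one associates the unique $\Zbb_2$-grading-preserving algebra automorphism $\al_\phi$ of $\wedge^\bt T^*_{X,-}$ determined by $\al_\phi|_{\Oc_X}=\id_{\Oc_X}$ and $\al_\phi|_{T^*_{X,-}}=\id+\phi$, and extended multiplicatively. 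Since $\phi$ raises $\Zbb$-degree by $2\ell$, the automorphism $\al_\phi$ belongs to $\mathrm G_{T^*_{X,-},2\ell+1}$, and the assignment $\phi\mapsto [\al_\phi]\in \mathrm G_{T^*_{X,-}, 2\ell+1}/\mathrm G_{T^*_{X,-}, 2\ell+3}$ is tautologically a section of the quotient map $\mathrm G_{T^*_{X,-}, 2\ell+1}/\mathrm G_{T^*_{X,-}, 2\ell+3}\ra\Ac_{T^*_{X,-}, 2\ell+1}$. What remains is to verify that this section is a homomorphism of sheaves of abelian groups.

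The main obstacle is precisely this verification, i.e., showing that $\al_{\phi_1}\circ\al_{\phi_2}\equiv \al_{\phi_1+\phi_2}\pmod{\mathrm G_{T^*_{X,-},2\ell+3}}$. For $u\in T^*_{X,-}$ a direct calculation gives
\begin{align*}
(\al_{\phi_1}\circ\al_{\phi_2})(u) = u + \phi_1(u)+\phi_2(u) + \big(\al_{\phi_1}(\phi_2(u)) - \phi_2(u)\big),
\end{align*}
and since $\phi_2(u)\in \wedge^{2\ell+1}T^*_{X,-}$ and $\al_{\phi_1}-\id$ raises $\Zbb$-degree by at least $2\ell$ on each factor of a wedge product, the error term $\al_{\phi_1}(\phi_2(u))-\phi_2(u)$ is forced into $\wedge^{\geq 4\ell+1}T^*_{X,-}$. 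The inequality $4\ell+1\geq 2\ell+3$ holds exactly in the range $\ell\geq 1$ relevant here, so the error sits in $J^{2\ell+3}$, i.e., in $\mathrm G_{T^*_{X,-}, 2\ell+3}$; and on $\Oc_X$ the composition is $\id$ identically. Hence the section $\phi\mapsto [\al_\phi]$ is a sheaf-theoretic splitting of sheaves of abelian groups, and therefore the associated connecting map in cohomology vanishes, which is the asserted triviality of $\pt^1_-$.
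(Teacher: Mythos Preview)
Your argument is correct and gives a genuinely different proof from the one in the paper. The paper proceeds by exploiting the dilation action of $\Cbb^\times$ on $\wedge^\bt T^*_{X,-}$: it shows that this action passes compatibly to $\mathrm G_{T^*_{X,-},k}$ and to $\Ac_{T^*_{X,-},k}$, and then compares weights. For $w\in H^0(X,\Ac_{T^*_{X,-},2\ell+1})$ one computes $\pt^1_{2\ell+2}(\lam\cdot w)$ in two ways, obtaining $\lam^{2\ell}\pt^1_{2\ell+2}(w)$ directly from Lemma~\ref{rfg784gf9hf03j099j33333} and $\lam^{2\ell+2}\pt^1_{2\ell+2}(w)$ via the factorisation through $\mbox{\v H}^1(X,\mathrm G_{T^*_{X,-},2\ell+2})$; the mismatch of exponents forces $\pt^1_{2\ell+2}=0$.

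Your route is more elementary and more explicit: you produce an honest splitting of the abelian extension $0\to\Ac_{2\ell+2}\to\mathrm G_{2\ell+1}/\mathrm G_{2\ell+3}\to\Ac_{2\ell+1}\to 0$ by the canonical lift $\phi\mapsto[\al_\phi]$, and your degree estimate $4\ell+1\geq 2\ell+3$ for $\ell\geq1$ is exactly what makes the section a homomorphism. This avoids the equivariance machinery entirely. On the other hand, the paper's dilation argument is set up so as to be iterated: the same weight comparison drives Proposition~\ref{rgf78f79hf93h98fh39h93} and ultimately Theorem~\ref{dkcdnlnvdvbebvukenvl} (the Vanish--Lift--Vanish principle), where one keeps lifting $\dt_{2\ell+2}$ through successive $\mathrm G_{T^*_{X,-},k}$ and repeatedly invokes the scaling discrepancy. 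Your splitting, as stated, handles only the first step; extending it to recover Theorem~\ref{dkcdnlnvdvbebvukenvl} would require producing compatible splittings at every level, which is plausible (your $\al_\phi$ already lies in $\mathrm G_{4\ell+1}$ modulo $\al_{\phi_1+\phi_2}$, suggesting much stronger control) but is not written down here.
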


\subsubsection{Lifting and Vanishing}
As a consequence of Proposition \ref{rfh89hf983hf893f93} we will obtain a lift of a certain map which we describe presently. Consider the following diagram of solid arrows:
\begin{align}
\xymatrix{
&\mbox{\v H}^1\big(X, \mathrm G_{T^*_{X, -}, 2\ell+3}\big)\ar[d]
\\
H^0\big(X, \Ac_{T^*_{X, -}, 2\ell+1}\big)\ar@{-->}[ur] \ar[r]^{\dt_{2\ell+2}} \ar[dr]_{\pt^1_-} & \mbox{\v H}^1\big(X, \mathrm G_{T^*_{X, -}, 2\ell+2}\big)\ar[d]^{\om_{2\ell+2*}}
\\
& H^1\big( X, \Ac_{T^*_{X, -}, 2\ell+2}\big)
}
\label{rnvebvuibvnekemlme}
\end{align}
Proposition \ref{rfh89hf983hf893f93} says $\pt^1_-= 0$ above. Hence by exactness of the column, we can deduce the existence of the following map, represented in \eqref{rnvebvuibvnekemlme} by the dashed arrow:
\begin{align}
\xymatrix{
\widetilde \dt_{2\ell+2} : H^0\big(X, \Ac_{T^*_{X, -}, 2\ell+1}\big) \ar@{-->}[rr] & & \mbox{\v H}^1\big(X, \mathrm G_{T^*_{X, -}, 2\ell+3}\big)}
\label{fcnkdbvjkbjvkjvnjekvle}
\end{align}
for each $\ell$. Hence as a result of Donagi and Witten's vanishing in Proposition \ref{rfh89hf983hf893f93} we can lift the boundary map $\dt_{2\ell+2}$ in \eqref{rnvebvuibvnekemlme} to $\widetilde\dt_{2\ell+2}$ in \eqref{fcnkdbvjkbjvkjvnjekvle}. This leads to a subsequent vanishing result. 

\begin{PROP}\label{rgf78f79hf93h98fh39h93}
For any $\ell$ the linearisation of the lift $\widetilde\dt_{2\ell+2}$ of $\dt_{2\ell+2}$ vanishes, i.e., that the composition,
\begin{align}
\widetilde\pt:  H^0\big(X, \Ac_{T^*_{X, -}, 2\ell+1}\big)
\stackrel{\widetilde \dt_{2\ell+2}}{\lra}
\mbox{\emph{\v H}}^1\big(X, \mathrm G_{T^*_{X, -}, 2\ell+3}\big)
\stackrel{\om_{2\ell+3*}}{\lra}
H^1\big(X, \Ac_{T^*_{X, -}, 2\ell+3}\big)
\label{rfh794fh97hf983hf93h}
\end{align}
vanishes.
\end{PROP}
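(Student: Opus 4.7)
The plan is to realise $\widetilde\pt$ as a connecting map associated to an auxiliary abelian SES, and then to deduce its vanishing from a higher analogue of the symmetry argument underlying Proposition \ref{rfh89hf983hf893f93}. First, I would assemble the commutative diagram of SES of sheaves of groups whose left column is
\[
\{e\} \lra \mathrm G_{T^*_{X, -}, 2\ell+3} \lra \mathrm G_{T^*_{X, -}, 2\ell+1} \lra \mathrm G_{T^*_{X, -}, 2\ell+1}/\mathrm G_{T^*_{X, -}, 2\ell+3} \lra \{e\}
\]
and which maps down to Green's defining SES $\{e\} \ra \mathrm G_{T^*_{X, -}, 2\ell+2} \ra \mathrm G_{T^*_{X, -}, 2\ell+1} \ra \Ac_{T^*_{X, -}, 2\ell+1} \ra \{e\}$ via the evident quotients. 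Since $\pt^1_-(\al) = 0$ for every $\al \in H^0(X, \Ac_{T^*_{X, -}, 2\ell+1})$, exactness of the long exact sequence extending \eqref{rniurui4f3hf083f09j30ereree} guarantees a global lift $\widetilde\al \in H^0(X, \mathrm G_{T^*_{X, -}, 2\ell+1}/\mathrm G_{T^*_{X, -}, 2\ell+3})$, and a diagram chase identifies $\widetilde\dt_{2\ell+2}(\al)$ with $\dt'(\widetilde\al)$, where $\dt'$ is the connecting map arising from the left column above.

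Next I would introduce the SES of \emph{abelian} sheaves
\[
\{0\} \lra \Ac_{T^*_{X, -}, 2\ell+3} \lra \mathrm G_{T^*_{X, -}, 2\ell+1}/\mathrm G_{T^*_{X, -}, 2\ell+4} \lra \mathrm G_{T^*_{X, -}, 2\ell+1}/\mathrm G_{T^*_{X, -}, 2\ell+3} \lra \{0\},
\]
which is abelian thanks to Proposition \ref{cnjbvkjbrvkjneke}. Its associated linear boundary $\pt_B$ fits into a commutative square with the composition $\om_{2\ell+3*}\circ \dt'$, so that the task of proving $\widetilde\pt(\al) = 0$ reduces to showing that $\widetilde\al$ itself lifts further to a global section of $\mathrm G_{T^*_{X, -}, 2\ell+1}/\mathrm G_{T^*_{X, -}, 2\ell+4}$.

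The main obstacle is precisely this second lifting problem. I would attack it by writing $\widetilde\al$ in local coordinates coming from trivialisations of $T^*_{X, -}$, expanding the induced automorphisms of $\wedge^\bt T^*_{X, -}$ via the Campbell-Baker-Hausdorff formula as in the proof of Proposition \ref{cnjbvkjbrvkjneke}, and isolating the term of weight $2\ell + 3$. The key point, and the source of the eventual vanishing, is that this term is quadratic in $\al$; since $\al$ is odd, living in $\Ac_{T^*_{X, -}, 2\ell+1}$ with $2\ell+1$ odd, the graded-commutative structure of $\wedge^\bt T^*_{X, -}$ forces the resulting self-bracket to produce a symmetric expression that vanishes after antisymmetrisation over the \v Cech indices. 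This is the same mechanism that produces Donagi and Witten's vanishing of $\pt^1_-$ but applied one filtration step deeper, and it constitutes the technical heart of the Vanish-Lift-Vanish principle whose detailed verification is deferred to the appendix.
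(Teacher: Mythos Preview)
Your diagram chase reducing $\widetilde\pt(\al)=0$ to the assertion that $\widetilde\al\in H^0\big(X,\mathrm G_{T^*_{X,-},2\ell+1}/\mathrm G_{T^*_{X,-},2\ell+3}\big)$ lifts to $H^0\big(X,\mathrm G_{T^*_{X,-},2\ell+1}/\mathrm G_{T^*_{X,-},2\ell+4}\big)$ is sound. The problem is your justification for the final step. You claim the degree $2\ell+3$ term is ``quadratic in $\al$'' and vanishes by an odd--self-bracket parity cancellation, but the degree count does not support this: $\al$ lives in filtration degree $2\ell+1$, so any quadratic expression (bracket or product) sits in degree $\geq 4\ell+2$, strictly above $2\ell+3$ for all relevant $\ell\geq 1$. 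There is no quadratic term at level $2\ell+3$ to cancel, and the invocation of ``antisymmetrisation over the \v Cech indices'' is misplaced since $\al$ is a global section, not a cocycle. In fact the very degree count you need is the one showing the SES
\[
0\lra \Ac_{T^*_{X,-},2\ell+3}\lra \mathrm G_{T^*_{X,-},2\ell+1}/\mathrm G_{T^*_{X,-},2\ell+4}\lra \mathrm G_{T^*_{X,-},2\ell+1}/\mathrm G_{T^*_{X,-},2\ell+3}\lra 0
\]
is canonically \emph{split} as sheaves of abelian groups (via the exponential, since $[\mathfrak g_{2\ell+1},\mathfrak g_{2\ell+1}]\subset \mathfrak g_{4\ell+2}\subset \mathfrak g_{2\ell+4}$), which immediately gives $\pt_B\equiv 0$ and hence the desired lift. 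So your route can be completed, but not for the reason you state.

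The paper takes an entirely different path. It does not reduce to a lifting problem at all; instead it exploits the $\Cbb^\times$ dilation action on $\wedge^\bt T^*_{X,-}$ worked out in the proof of Proposition~\ref{rfh89hf983hf893f93}. Under this action $H^0(X,\Ac_{T^*_{X,-},2\ell+1})$ has weight $2\ell$ while $H^1(X,\Ac_{T^*_{X,-},2\ell+3})$ has weight $2\ell+2$, and the maps $\widetilde\dt_{2\ell+2}$ and $\om_{2\ell+3*}$ are $\Cbb^\times$-equivariant (Proposition~\ref{fmdvbmdbvbmvbekjve}). Since $\widetilde\pt$ is linear, comparing $\widetilde\pt(\lam\cdot w)=\lam^{2\ell}\widetilde\pt(w)$ with $\widetilde\pt(\lam\cdot w)=\lam^{2\ell+2}\widetilde\pt(w)$ forces $\widetilde\pt=0$. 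This weight mismatch is the actual ``same mechanism'' behind Donagi--Witten's vanishing, not a CBH parity cancellation.
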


\noindent
The argument in Proposition \ref{rgf78f79hf93h98fh39h93} on the vanishing of the linearisation of $\widetilde \dt_{2\ell+2}$ generalises and allows us to deduce a more powerful vanishing result.

\begin{THM}\label{dkcdnlnvdvbebvukenvl}
For any $\ell$ the boundary map $\dt_{2\ell+2}$ in \eqref{rnvebvuibvnekemlme} is constant and sends every global section in $H^0\big(X, \Ac_{T^*_{X, -}, 2\ell+1}\big)$ to the base-point in $\mbox{\emph{\v H}}^1\big(X, \mathrm G_{T^*_{X, -}, 2\ell+2}\big)$.
\end{THM}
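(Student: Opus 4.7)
The plan is to prove Theorem \ref{dkcdnlnvdvbebvukenvl} by iteratively lifting the boundary map $\dt_{2\ell+2}$ up the AQ filtration $\mathrm G_{T^*_{X, -}}$ until the target cohomology set is forced to be a singleton. The key input is that $\mathrm G_{T^*_{X, -}, k}$ vanishes for $k > q := \mathrm{rank}~T^*_{X, -}$, so $\mbox{\v H}^1\big(X, \mathrm G_{T^*_{X, -}, k}\big)$ consists of only the base-point once $k$ is sufficiently large; if $\dt_{2\ell+2}$ admits a compatible lift to this level, then it must itself be constant at the base-point.

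To set up the iteration, first observe that the composition $\om_{2\ell+2*}\circ \dt_{2\ell+2}$ coincides with the odd component $\pt^1_-$ restricted to the graded component $H^0\big(X, \Ac_{T^*_{X, -}, 2\ell+1}\big)$. By Proposition \ref{rfh89hf983hf893f93} this map vanishes, and hence by exactness of the long exact sequence induced by $\{e\}\ra \mathrm G_{T^*_{X, -}, 2\ell+3}\ra \mathrm G_{T^*_{X, -}, 2\ell+2}\ra \Ac_{T^*_{X, -}, 2\ell+2}\ra\{e\}$ we obtain the first lift $\widetilde\dt_{2\ell+2}^{(1)}$ valued in $\mbox{\v H}^1\big(X, \mathrm G_{T^*_{X, -}, 2\ell+3}\big)$. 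By Proposition \ref{rgf78f79hf93h98fh39h93} the linearisation of this first lift is zero, so by the same exactness argument we obtain a second lift $\widetilde\dt_{2\ell+2}^{(2)}$ valued in $\mbox{\v H}^1\big(X, \mathrm G_{T^*_{X, -}, 2\ell+4}\big)$, and so on.

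The inductive step is the crux. I would argue by induction on $k\geq 1$ that there exists a lift $\widetilde\dt_{2\ell+2}^{(k)}: H^0\big(X, \Ac_{T^*_{X, -}, 2\ell+1}\big)\ra \mbox{\v H}^1\big(X, \mathrm G_{T^*_{X, -}, 2\ell+2+k}\big)$ compatible under the projections $\tau^{~j}_{j'*}$ with $\dt_{2\ell+2}$, and moreover that its linearisation $\om_{2\ell+2+k*}\circ \widetilde\dt_{2\ell+2}^{(k)}$ vanishes identically. This is precisely the \emph{Vanish-Lift-Vanish principle} described in the introduction and whose careful proof is carried out in Appendix \ref{fklnvkbvjrbvnfkemlkmel}; the content is that Donagi--Witten's vanishing of $\pt^1_-$ propagates up the filtration once one has the initial lift. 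Granted this principle, the existence of $\widetilde\dt_{2\ell+2}^{(k+1)}$ follows from the vanishing of the linearisation of $\widetilde\dt_{2\ell+2}^{(k)}$ via the long exact sequence associated to $\{e\}\ra \mathrm G_{T^*_{X, -}, 2\ell+3+k}\ra \mathrm G_{T^*_{X, -}, 2\ell+2+k}\ra \Ac_{T^*_{X, -}, 2\ell+2+k}\ra\{e\}$.

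Finally, choose $k$ large enough that $2\ell+2+k > q$, so $\mbox{\v H}^1\big(X, \mathrm G_{T^*_{X, -}, 2\ell+2+k}\big)$ is a one-point set. Then $\widetilde\dt_{2\ell+2}^{(k)}$ is necessarily constant at the base-point, and by compatibility with the projections down to $\mbox{\v H}^1\big(X, \mathrm G_{T^*_{X, -}, 2\ell+2}\big)$ the original map $\dt_{2\ell+2}$ is constant at the base-point as well. The main obstacle is the inductive step of the Vanish-Lift-Vanish principle: unlike the base cases \ref{rfh89hf983hf893f93} and \ref{rgf78f79hf93h98fh39h93}, which are comparatively self-contained, propagating the vanishing of the linearisation all the way up the filtration requires a subtle compatibility argument between successive lifts and the AQ structure, and it is this bookkeeping that is the real work of the proof.
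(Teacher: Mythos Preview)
Your proposal is correct and follows essentially the same approach as the paper: iteratively lift $\dt_{2\ell+2}$ through the filtration using the vanishing of each successive linearisation, then terminate once the target group is trivial. The one place where your description diverges from the paper is in your assessment of the inductive step. You call it ``a subtle compatibility argument between successive lifts and the AQ structure'' and flag ``bookkeeping'' as the real work; the paper, by contrast, dispatches it with ``arguing as in Proposition \ref{rgf78f79hf93h98fh39h93}''. The point is that the dilation-weight argument used there works verbatim at every level: the source $H^0\big(X, \Ac_{T^*_{X,-}, 2\ell+1}\big)$ always carries weight $\lam^{2\ell}$, while the target $H^1\big(X, \Ac_{T^*_{X,-}, 2\ell+2+k}\big)$ carries weight $\lam^{2\ell+2+k}$ or $\lam^{2\ell+1+k}$ depending on parity, and for every $k\geq 1$ these differ, forcing the linearisation to vanish. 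So the inductive step is not harder than the base case; it is the same argument repeated.
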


\noindent
Donagi and Witten's vanishing in Proposition \ref{rfh89hf983hf893f93}, followed by the construction of the lift $\widetilde\dt$ in \eqref{fcnkdbvjkbjvkjvnjekvle}, the vanishing of it's linearisation in Proposition \ref{rgf78f79hf93h98fh39h93} leading finally to the vanishing in Theorem \ref{dkcdnlnvdvbebvukenvl} above will be referred to collectively as the \emph{Vanish-Lift-Vanish} principle. We present a proof of this principle in Appendix \ref{fklnvkbvjrbvnfkemlkmel}.

\subsubsection{Proof of Theorem $\ref{rfh48gh4hg4040gj490j}$}
The Vanish-Lift-Vanish principle, culminating in Theorem \ref{dkcdnlnvdvbebvukenvl}, will be a crucial ingredient in our proof of Theorem \ref{rfh48gh4hg4040gj490j}. Before embarking on this proof it will be useful to digress and discuss the following general observation by Grothendieck in \cite{GROTHNONAB}, reviewed by Brylinski in \cite[p. 160]{BRY} and mentioned in the proof of Lemma \ref{rfggf784gfhf983hf89h393}. 

\begin{LEM}\label{rfh794hf98hf80j309j30}
Let $\{e\}\ra\Ac\ra\Gc\ra\Cc\ra\{e\}$ be a short exact sequence of sheaves of groups on a space $X$. Then there exists an action $\star$ of $H^0(X, \Cc)$ on $\mbox{\emph{\v H}}^1(X, \Ac)$ such that: for any $a, a^\p$ in $\mbox{\emph{\v H}}^1(X, \Ac)$, their image coincides in $\mbox{\emph{\v H}}^1(X, \Gc)$ if and only if there exists some global section $c\in H^0(X, \Cc)$ such that $a^\p = c\star a$.\qed
\end{LEM}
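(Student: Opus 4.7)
The plan is to construct the action $\star$ by lifting and conjugation, then show by direct cocycle manipulation that sharing an image in $\mbox{\v H}^1(X,\Gc)$ is exactly the obstruction to lying in the same $\star$-orbit.

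First I would build the action. Given $c \in H^0(X,\Cc)$ and a class $a \in \mbox{\v H}^1(X,\Ac)$ represented by a cocycle $(a_{ij})$ on an open cover $\Ufr = (U_i)$, use surjectivity of $\Gc\ra\Cc$ as a map of sheaves to pass to a refinement on which $c|_{U_i}$ lifts to some $g_i \in \Gc(U_i)$, and set
\begin{align*}
(c\star a)_{ij} = g_i\, a_{ij}\, g_j^{-1}.
\end{align*}
Normality of $\Ac$ inside $\Gc$ (which holds because $\Ac$ is the kernel of $\Gc\ra\Cc$) guarantees that $(c\star a)_{ij}$ takes values in $\Ac$, and a direct check shows this is an $\Ac$-valued $1$-cocycle. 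The verification that $(c\star a)$ is independent of the choice of lifts $(g_i)$ up to $\Ac$-coboundary, and that $(cc')\star a = c \star (c'\star a)$ in $\mbox{\v H}^1(X,\Ac)$, proceed by standard cocycle bookkeeping.

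Next I would prove the easy direction: if $a' = c\star a$, then by construction $a'_{ij} = g_i a_{ij} g_j^{-1}$ as an equation in $\Gc(U_{ij})$, which exhibits $a$ and $a'$ as cohomologous $\Gc$-valued cocycles. Hence their images in $\mbox{\v H}^1(X,\Gc)$ coincide.

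The converse is the main point. Suppose $a$ and $a'$ have the same image in $\mbox{\v H}^1(X,\Gc)$. After a common refinement of covers, there exist $g_i\in \Gc(U_i)$ with $a'_{ij} = g_i a_{ij} g_j^{-1}$ in $\Gc(U_{ij})$. Project to $\Cc$: writing $\bar g_i$ for the image of $g_i$ in $\Cc(U_i)$, and using that $a_{ij},a'_{ij}\in \Ac$ map to the identity in $\Cc$, the projected relation reads $\bar g_i = \bar g_j$ on $U_{ij}$. By the sheaf axiom for $\Cc$, the local sections $\bar g_i$ patch to a single global section $c\in H^0(X,\Cc)$, and by the very definition of $\star$ we then have $a' = c\star a$. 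The main obstacle here is the bookkeeping in the non-abelian setting: one must track that the choice of lifts $(g_i)$ really does produce a well-defined class under the action and that the gluing of $(\bar g_i)$ to a global section is legitimate after the refinement. Once this is set up, both implications fall out from matching terms in the two cocycle equations.
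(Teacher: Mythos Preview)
Your argument is correct and is precisely the standard proof of this classical fact from nonabelian \v Cech cohomology. Note, however, that the paper does not give its own proof of this lemma: it is stated with a \qed and attributed to Grothendieck (and the exposition in Brylinski). The paper only records the formula $(c\star a)_{ij}=g_ia_{ij}g_j^{-1}$ elsewhere, in the course of proving Lemma~\ref{rfggf784gfhf983hf89h393}, which matches your construction exactly. So there is nothing to compare---you have supplied the details the paper omits.

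One small point worth tightening: you say that ``normality of $\Ac$ inside $\Gc$ guarantees that $(c\star a)_{ij}$ takes values in $\Ac$.'' Normality alone gives $g\,a\,g^{-1}\in\Ac$, but your expression $g_i\,a_{ij}\,g_j^{-1}$ has distinct conjugating elements. What you really use is that $g_i$ and $g_j$ both lift the \emph{same} global section $c$, so $g_ig_j^{-1}\in\Ac(U_{ij})$; then
\[
g_i\,a_{ij}\,g_j^{-1}=(g_i\,a_{ij}\,g_i^{-1})(g_ig_j^{-1})\in\Ac(U_{ij}),
\]
the first factor by normality and the second by the kernel condition. This is implicit in your converse argument, so the proof goes through once this is made explicit.
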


\noindent
To the short exact sequence of sheaves $\{e\}\ra\Ac\ra\Gc\ra\Cc\ra\{e\}$ on $X$ let $\dt : H^0(X, \Cc) \ra \mbox{\v H}^1(X, \Ac)$ be the boundary map. In the proof of Lemma \ref{rfggf784gfhf983hf89h393} this boundary map $\dt$
 was related to the action $\star$ from Lemma \ref{rfh794hf98hf80j309j30}. Importantly, for our purposes, we have:

\begin{LEM}\label{ciuevuyegi3g89h30j9f}
Suppose $\dt : H^0(X, \Cc) \ra \mbox{\emph{\v H}}^1(X, \Ac)$ is trivial. Then for any $c\in H^0(X, \Cc)$ we have 
\[
c\star a = a.
\]
for all $a\in \mbox{\emph{\v H}}^1(X, \Ac)$.
\end{LEM}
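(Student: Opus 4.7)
The plan is to use the identification $\dt(c) = c\star\{e\}$ from the proof of Lemma \ref{rfggf784gfhf983hf89h393}, together with the cocycle formula $(c\star a)_{ij} = g_i a_{ij} g_j^{-1}$ defining the action $\star$. By left-exactness of the Grothendieck long exact sequence at $H^0(X,\Cc)$, triviality of $\dt$ is equivalent to surjectivity of $H^0(X,\Gc)\to H^0(X,\Cc)$; so every $c\in H^0(X,\Cc)$ admits a global lift $h\in H^0(X,\Gc)$.

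Given such a global lift $h$ and a cocycle $(a_{ij})$ representing $a\in \mbox{\v H}^1(X,\Ac)$ on a cover $(U_i)$ of $X$, I would take the local lifts $g_i := h|_{U_i}$. Because $h$ is a single global section of $\Gc$, the restrictions agree on overlaps: $g_i|_{U_{ij}} = g_j|_{U_{ij}} = h|_{U_{ij}}$. The cocycle formula therefore collapses to
\begin{align*}
(c\star a)_{ij} = h|_{U_{ij}}\,a_{ij}\,h|_{U_{ij}}^{-1},
\end{align*}
so $c\star a$ is represented by the cocycle obtained from $(a_{ij})$ by conjugation with the global section $h$.

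The remaining step is to show this conjugated cocycle is cohomologous to $(a_{ij})$ in $\mbox{\v H}^1(X,\Ac)$. In the setting in which the lemma is ultimately deployed --- the AQ central series context established by Proposition \ref{cnjbvkjbrvkjneke}, where the relevant ambient quotients $\Hc_{j}/\Hc_{j+2}$ are abelian, so that $\Ac$ sits inside the centre of $\Gc$ --- the conjugation $h|_{U_{ij}}\,a_{ij}\,h|_{U_{ij}}^{-1} = a_{ij}$ already holds on the nose at the cocycle level, yielding $c\star a = a$ directly. Independently of this cocycle computation, one can observe via Lemma \ref{rfh794hf98hf80j309j30} that conjugation by the \emph{global} section $h$ is an inner automorphism of $\Gc$, so $[c\star a]$ and $[a]$ have identical image in $\mbox{\v H}^1(X,\Gc)$ and hence lie in the same $\star$-orbit.

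The principal obstacle I anticipate is bridging \emph{triviality at the basepoint $\{e\}$} (which is precisely $\dt=0$) with \emph{triviality at an arbitrary $a\in \mbox{\v H}^1(X,\Ac)$}. In the central regime underpinning the applications this is subsumed by the cocycle computation above. Outside that regime the step would require a twisting argument: replace the short exact sequence by its $a$-twist $\{e\}\to\Ac^a\to\Gc^a\to\Cc\to\{e\}$, so that $a$ becomes the basepoint of $\mbox{\v H}^1(X,\Ac^a)\cong\mbox{\v H}^1(X,\Ac)$, and verify that the twisted boundary map $\dt^a$ inherits triviality from $\dt$ before translating the resulting basepoint triviality back to triviality of the $\star$-action at $a$.
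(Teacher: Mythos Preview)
Your argument via the global lift differs from the paper's route. The paper argues formally: from $\delta(c)=c\star e$ and triviality of $\delta$ one has $c\star e=e$; then, treating $\star$ as though it extends to an associative product on $\mbox{\v H}^1(X,\mathcal A)$ with $e$ as unit, it writes
\[
c\star a \;=\; c\star(e\star a) \;=\; (c\star e)\star a \;=\; e\star a \;=\; a.
\]
Unpacked, this is the identity $c\star a = \delta(c)\cdot a$ in the abelian group $\mbox{\v H}^1(X,\mathcal A)$, which holds when $\mathcal A$ is central in $\mathcal G$ --- exactly the hypothesis your cocycle computation $h\,a_{ij}\,h^{-1}=a_{ij}$ invokes. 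Both arguments therefore rest on centrality; yours makes this explicit, while the paper's formal manipulation leaves it implicit. Your aside that $[c\star a]$ and $[a]$ share the same image in $\mbox{\v H}^1(X,\mathcal G)$ is true but insufficient on its own: it only places them in a common $\star$-orbit, not at the same point.

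Your caution about the non-central regime is well founded, and the twisting sketch cannot repair it: the statement is \emph{false} without centrality. Over $X=S^1$ with the constant sequence $1\to\mathbb Z/3\to S_3\to\mathbb Z/2\to 1$, the map $H^0(X,S_3)\to H^0(X,\mathbb Z/2)$ is surjective so $\delta$ is trivial, yet the nontrivial $c$ lifts to a transposition $h$ and $c\star a = [h\,a\,h^{-1}]$ acts as inversion on $\mbox{\v H}^1(S^1,\mathbb Z/3)\cong\mathbb Z/3$, giving $c\star a\neq a$ for $a\neq 0$. So triviality of $\delta$ does not propagate to the twisted $\delta^a$, and there is no general argument to be had. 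In the paper's applications the needed centrality is supplied by the AQ-degree structure of Proposition~\ref{cnjbvkjbrvkjneke}, which is precisely what your proof already leans on.
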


\begin{proof}
From Lemma \ref{rfggf784gfhf983hf89h393} the boundary map $\dt$ is given by 
\[
\dt : c \longmapsto c\star e
\]
where $e\in H^1(X, \Ac)$ is the base-point. If $\dt$ is trivial, then $\dt(c) = e$ for all $c$. Using that $e\star e = e$ we find $\dt(c) = c\star e = e$ for all $c$. Now the action $\star$ is associative. As such, for any $c\in H^0(X, \Cc)$ and $a\in H^1(X, \Ac)$ we have:
\[
c\star a
=
c\star \big(e\star a\big)
=
\big(c\star e\big)\star a
=
\dt(c)\star a
=
e\star a
=
a.
\]
The lemma now follows.
\end{proof}

\noindent
We now resume our proof of Theorem \ref{rfh48gh4hg4040gj490j}. Firstly recall the diagram in \eqref{rfh37f973h8309j955g5}. Note that we can `fill it in' to get:
\begin{align}
\xymatrix{
& & \Ac_{T^*_{X,-}, j+1}\ar[d]
\\
\mathrm G_{T^*_{X, -}, j+2} \ar[d] \ar[r] &\ar@{=}[d] \mathrm G_{T^*_{X, -}, j} \ar[r] & \mathrm G_{T^*_{X, -}, j} /\mathrm G_{T^*_{X, -}, j+2} \ar[d]
\\
\mathrm G_{T^*_{X, -}, j+1} \ar[d] \ar[r] & \mathrm G_{T^*_{X, -}, j} \ar[r] & \Ac_{T^*_{X, -}, j}
\\
\Ac_{T^*_{X, -}, j+1}& & 
}
\label{rfh37f973h862626262309j955g5sss}
\end{align}
And hence on cohomology,
\begin{align}
\xymatrix{
& H^0\big(X, \Ac_{T^*_{X, -}, j+1}\big)\ar[d] & 
\\
H^0\big(X, \mathrm G_{T^*_{X, -}, j}/ \mathrm G_{T^*_{X, -}, j+2}\big)\ar[r] & \mbox{\v H}^1\big(X, \mathrm G_{T^*_{X, -}, j+2}\big)\ar[d]_{\tau^{j+2}_{~j+1*}} \ar[r]^{\tau^{j+2}_{~j*}} & \mbox{\v H}^1\big(X, \mathrm G_{T^*_{X, -}, j}\big)\ar@{=}[d]
\\
& \mbox{\v H}^1\big(X, \mathrm G_{T^*_{X, -}, j+1}\big) \ar[r]& \mbox{\v H}^1\big(X, \mathrm G_{T^*_{X, -}, j}\big)
}
\label{fkncdnvvrfifio4jio4ftnkt}
\end{align}
By Lemma \ref{rfh794hf98hf80j309j30} we know that $\mbox{\v H}^1\big(X, \mathrm G_{T^*_{X, -}, j+2}\big)$ carries actions by $H^0\big(X, \Ac_{T^*_{X, -}, j+1}\big)$ and $H^0\big(X, \mathrm G_{T^*_{X, -}, j}/ \mathrm G_{T^*_{X, -}, j+2}\big)$ with respect to which the maps $\tau^{j+2}_{~j+1*}$ and $\tau^{j+2}_{~j*}$ are invariant, respectively. Specialising to the case where $j = 2\ell$, Theorem \ref{dkcdnlnvdvbebvukenvl} and Lemma \ref{ciuevuyegi3g89h30j9f} guarantee that the action of $H^0\big(X, \Ac_{T^*_{X, -}, j+1}\big)$ will be trivial. If we additionally assume $\img~H^0(p_{2\ell*})$ is trivial, then by \eqref{rniurui4f3hf083f09j30ereree} the groups $H^0\big(X, \Ac_{T^*_{X, -}, j+1}\big)$ and $H^0\big(X, \mathrm G_{T^*_{X, -}, j}/ \mathrm G_{T^*_{X, -}, j+2}\big)$ will be isomorphic and hence that the action of the group $H^0\big(X, \mathrm G_{T^*_{X, -}, j}/ \mathrm G_{T^*_{X, -}, j+2}\big)$ must also trivial. Uniqueness of the lift $\tau_{~2\ell*}^{2\ell+2}: x^\p\mapsto x$ then follows from Lemma \ref{rfh794hf98hf80j309j30}. 
\qed

\subsection{Projectable Models}

\subsubsection{Existence}
Any model $(X, T^*_{X, -})$ gives rise to a class of supermanifolds and, rather than studying particular supermanifolds, we might prefer to study this class of supermanifolds instead. This was the subject of \cite{BETTHIGHOBS} where a characterisation of models as being `good' or otherwise was presented (see Definition \ref{fnkjnvkubvuinoifoi3fijio3jfiojo}). Presently we will consider another kind of characterisation. Recall that a supermanifold $\Xfr$ is a locally ringed space $(X, \Oc_\Xfr)$ where $\Oc_\Xfr$ is globally $\Zbb_2$-graded sheaf which is locally isomorphic to a sheaf of exterior algebras.  With $\Oc_X$ the sheaf capturing the complex structure of $X$, the global $\Zbb_2$-grading on $\Oc_\Xfr$ defines an ideal $\Jc$ such that $\Oc_\Xfr/\Jc = \Oc_X$. Hence $\Jc$ is the ideal sheaf of an embedding of spaces $i:X\subset \Xfr$. As discussed in \cite{DW1}, a highly relevant structure on a supermanifold for the purposes of theoretical physics is on the existence of a projection map $\pi: \Xfr\ra X$ with $\pi i = {\bf 1}_X$. Such a map allows for the reduction of measures defined on $\Xfr$ to measures on $X$ and hence allows for a workable notion of `integration on supermanifolds'.

\begin{DEF}
\emph{A supermanifold $\Xfr = (X, \Oc_\Xfr)$ is said to be \emph{projectable} if there exists a projection map $\pi : \Xfr\ra X$.}
\end{DEF}

\noindent
Donagi and Witten in \cite{DW1} identify a collection of classes which obstruct the existence of a projection map in analogy with the obstructions to splitting discussed in the present article. For our purposes we will only need to know the following (see \cite[p. 18]{DW1}):

\begin{LEM}\label{cndnvjdbjmdbjfbku3f}
Any obstruction to the existence of a projection map coincides with an obstruction to splitting a supermanifold atlas of even order.\qed
\end{LEM}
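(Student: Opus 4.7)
The plan is to build a projection $\pi : \Xfr \to X$ by constructing, at the level of structure sheaves, a section $\pi^* : \Oc_X \to \Oc_\Xfr$ of the reduction $\Oc_\Xfr \to \Oc_\Xfr/\Jc = \Oc_X$ order-by-order along the $\Jc$-adic filtration on $\Oc_\Xfr$. Fixing a supermanifold atlas for $\Xfr$, I would attempt to construct compatible sections $\pi^*_k : \Oc_X \to \Oc_\Xfr/\Jc^{k+1}$ inductively in $k$. Since $\pi$ must respect the $\Zbb_2$-grading and $\Oc_X$ is purely even, the image of $\pi^*$ is forced to lie in the even part of $\Oc_\Xfr$; equivalently, $\pi^*_k$ must factor through $\Oc_\Xfr^+/\Jc^{k+1}$.

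Given $\pi^*_k$, a standard \v{C}ech-cohomological argument identifies the obstruction to extending to $\pi^*_{k+1}$ as a class $\vp_{k+1} \in H^1\bigl(X, \Jc^{k+1}/\Jc^{k+2}\bigr) \cong H^1\bigl(X, \wedge^{k+1} T^*_{X, -}\bigr)$. The crucial observation is that, since $\pi^*$ is required to land in $\Oc_\Xfr^+$, the extension problem is automatically trivial whenever $\Jc^{k+1}/\Jc^{k+2}$ is purely odd, i.e., whenever $k+1$ is odd. Consequently $\vp_{k+1} = 0$ for $k+1$ odd, and only obstructions indexed by even integers $k+1 = 2\ell$ can genuinely obstruct projectability.

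Next, I would compare these even-order projection obstructions with the splitting obstructions in $H^1\bigl(X, \Ac_{T^*_{X, -}, 2\ell}\bigr)$. A splitting of $\Xfr$ to order $2\ell$ is a $\Zbb_2$-graded algebra isomorphism $\Oc_\Xfr/\Jc^{2\ell+1} \cong \bigoplus_{j \leq 2\ell} \wedge^{j} T^*_{X, -}$, which in particular supplies a compatible projection modulo $\Jc^{2\ell+1}$. Interpreting $\vp_{2\ell}$ as the linearisation of the failure of the given atlas to be split at order $2\ell$, under the identification $\Ac_{T^*_{X, -}, 2\ell} \cong T_X \otimes \wedge^{2\ell} T^*_{X, -}$ the component of a Green-type automorphism that displaces sections of $\Oc_X$ by $\wedge^{2\ell} T^*_{X, -}$ lines up with the \v{C}ech cocycle governing $\vp_{2\ell}$.

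The hardest step will be making this last identification fully rigorous, since the two obstructions are \emph{a priori} defined with respect to different cochain complexes --- one on the graded pieces of $\Oc_\Xfr$ itself, the other on the sheaf of automorphisms $\mathrm G_{T^*_{X, -}}$. The technical heart is therefore to construct a natural comparison morphism between these two complexes in even degrees and to verify that the induced map on $H^1$ matches $\vp_{2\ell}$ with its splitting-obstruction counterpart. Once this is established, the lemma follows: every obstruction encountered in the order-by-order construction of $\pi$ is of this even-order type, and is therefore the obstruction to splitting some supermanifold atlas of even order.
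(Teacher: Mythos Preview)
The paper does not prove this lemma: it is stated with a \verb|\qed| and attributed to Donagi and Witten \cite[p.~18]{DW1}. So there is no ``paper's own proof'' to compare against; your proposal is an attempt to supply the argument that the author chose to cite.

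Your overall strategy---build $\pi^*$ order by order along the $\Jc$-adic filtration, observe that parity kills the odd steps, and identify the even-step obstruction with the splitting obstruction---is the right one and is essentially the Donagi--Witten argument. However, there is a concrete error in your second paragraph. The obstruction to lifting an \emph{algebra} homomorphism $\pi_k^*:\Oc_X\to\Oc_\Xfr/\Jc^{k+1}$ through the square-zero extension by $\Jc^{k+1}/\Jc^{k+2}$ is not a class in $H^1\big(X,\Jc^{k+1}/\Jc^{k+2}\big)$. Two local lifts differ by a \emph{derivation} $\Oc_X\to \wedge^{k+1}T^*_{X,-}$, so the obstruction lives in
\[
H^1\big(X,\ \mathcal{D}er(\Oc_X,\wedge^{k+1}T^*_{X,-})\big)\ \cong\ H^1\big(X,\ T_X\otimes\wedge^{k+1}T^*_{X,-}\big).
\]
For $k+1=2\ell$ even this is exactly $H^1\big(X,\Ac_{T^*_{X,-},2\ell}\big)$ by Green's identification recalled in the appendix. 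In other words, once you correct the target of $\vp_{k+1}$, the projection obstruction and the even-order splitting obstruction already sit in the \emph{same} cohomology group, and the ``hardest step'' you flag---constructing a comparison morphism between two different complexes---largely evaporates: the \v Cech cocycle $(\pi^*_{k+1,i})\circ(\pi^*_{k+1,j})^{-1}$ on overlaps is literally the even part of the Green cocycle for the atlas, so the two classes agree on the nose rather than merely corresponding under some auxiliary map.

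So the gap is not fatal, but as written your argument would not go through: you would be trying to match a class in $H^1(X,\wedge^{2\ell}T^*_{X,-})$ with one in $H^1(X,T_X\otimes\wedge^{2\ell}T^*_{X,-})$, and there is no natural map doing this. Fix the deformation-theoretic bookkeeping and the rest of your outline becomes a clean proof.
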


\noindent
We now consider the following feature definable for models and thereby refining their classification.

\begin{DEF}\label{reiugfgf783h983hf89h30}
\emph{A model $(X, T^*_{X, -})$ is said to be \emph{projectable} if \emph{any} supermanifold modelled on $(X, T^*_{X, -})$ is projectable.}
\end{DEF}

\noindent
In Theorem \ref{rfuihf3hf98h3f89h398fh3} we deduced an interesting, geometric consequence of the vanishing of the first obstruction cohomology of a model. Subsequently we related this assumption to the notion of `goodness' of a model (see Proposition \ref{rfh489fh4hf4hf8fj39f} and Theorem \ref{rfg7gfi3hfiuwhfioh}). Presently, we find a relation to projectability.

\begin{THM}\label{fvndvkbvubvknenvkno}
Let $(X, T^*_{X, -})$ be a model with vanishing first obstruction cohomology, i.e., with ${\bf H}^1_\pt \big(X, \Ac_{T^*_{X, -}}\big) = (0)$. Then $(X, T^*_{X, -})$ is projectable.
\end{THM}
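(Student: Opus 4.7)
The plan is to show that the hypothesis ${\bf H}^1_\pt(X, \Ac_{T^*_{X, -}}) = (0)$ forces every even-order obstruction to splitting a supermanifold atlas to vanish. Coupled with Lemma \ref{cndnvjdbjmdbjfbku3f}, which identifies obstructions to the existence of a projection map with even-order obstructions to splitting, this will establish projectability of every supermanifold modelled on $(X, T^*_{X, -})$, and hence projectability of the model in the sense of Definition \ref{reiugfgf783h983hf89h30}.

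Fix a supermanifold $\Xfr$ modelled on $(X, T^*_{X, -})$ and consider an obstruction to the existence of a projection on $\Xfr$. By Lemma \ref{cndnvjdbjmdbjfbku3f} this obstruction coincides with an obstruction to splitting $\q = \om_{2\ell*}(x) \in H^1(X, \Ac_{T^*_{X, -}, 2\ell})$ for some atlas $x$ of $\Xfr$ at even order $2\ell \geq 2$. Since $\q$ is the linearisation of a bona fide atlas, Proposition \ref{fbiubiiidnono2o} yields $\pt\q = 0$. The vanishing hypothesis ${\bf H}^1_\pt = (0)$ then provides $\q' \in H^0(X, \Ac_{T^*_{X, -}, 2\ell - 1})$ with $\q = \pt\q'$.

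Appealing to the commutative diagram \eqref{Prfg84gf848f7h4fh9} underlying Proposition \ref{rfh489fh4hf4hf8fj39f}, the primary differential factors as $\pt = \om_{2\ell*} \circ \dt_{2\ell}$, so $\q = \om_{2\ell*}(\dt_{2\ell}\q')$. The Vanish-Lift-Vanish principle, Theorem \ref{dkcdnlnvdvbebvukenvl}, now applies: because the target order $2\ell$ is even and $\q'$ sits in $H^0$ of odd degree $2\ell - 1$, the boundary $\dt_{2\ell}\q'$ is forced to coincide with the base-point in $\mbox{\v H}^1(X, \mathrm G_{T^*_{X, -}, 2\ell})$. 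As the linearisation map $\om_{2\ell*}$ sends base-points to the trivial class, we conclude $\q = 0$.

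Consequently every obstruction to the existence of a projection $\pi: \Xfr \to X$ vanishes, such a projection exists, and $\Xfr$ is projectable. Since $\Xfr$ was arbitrary, $(X, T^*_{X, -})$ is projectable. The main conceptual obstacle is the parity matching in the previous paragraph: it is essential that the target order $2\ell$ is even, for this is precisely what places the coboundary representative $\q'$ in the odd-degree input range where the Vanish-Lift-Vanish principle collapses $\dt_{2\ell}$ to the base-point. An analogous argument at odd orders would require Vanish-Lift-Vanish in a range where it is silent, which is the technical reason why projectability, rather than splitness, is what follows from ${\bf H}^1_\pt = (0)$ alone.
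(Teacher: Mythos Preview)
Your proof is correct and follows essentially the same route as the paper: reduce via Lemma \ref{cndnvjdbjmdbjfbku3f} to showing even-order obstructions vanish, use $\pt\q=0$ together with ${\bf H}^1_\pt=(0)$ to write $\q=\pt\q'$ with $\q'$ in odd degree, and then invoke Theorem \ref{dkcdnlnvdvbebvukenvl} to kill $\q$. Your explicit factorisation through $\dt_{2\ell}$ and the closing remark on why only projectability (not splitness) follows are a nice touch, but the argument is the paper's own.
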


\begin{proof}
By Lemma \ref{cndnvjdbjmdbjfbku3f} we need only confirm, under the assumption ${\bf H}^1_\pt \big(X, \Ac_{T^*_{X, -}}\big) = (0)$, that the obstruction to splitting any supermanifold atlas of even order will vanish. 
Central to our proof is Theorem \ref{dkcdnlnvdvbebvukenvl}. Recall from Theorem \ref{rfh89hf3jf903jf3} that any supermanifold atlas modelled on $(X, T^*_{X, -})$ will define an obstruction to splitting in a homogeneous component in the kernel $\ker\big\{\pt : H^1\big(X, \Ac_{T^*_{X, -}}\big)\ra H^1\big(X, \Ac_{T^*_{X, -}}[1]\big)\big\}$. Since ${\bf H}^1_\pt\big(X, \Ac_{T^*_{X, -}}\big) = (0)$ we can equate,
\begin{align}
\ker\big\{\pt : H^1\big(X, \Ac_{T^*_{X, -}}\big)&\ra H^1\big(X, \Ac_{T^*_{X, -}}[1]\big)\big\} 
\notag\\
&=
\img\big\{\pt : H^1\big(X, \Ac_{T^*_{X, -}}[-1]\big)\ra H^1\big(X, \Ac_{T^*_{X, -}}\big)\big\}.
\label{rnii4fi7fh8fh83hf0333330}
\end{align}
Hence any obstruction will come from a homogeneous, global section $u$ of $\Ac_{T^*_{X, -}}$.
If $u$ is odd, i.e., lies in $H^0\big(X, \Ac_{T^*_{X, -}; -}\big)$ then by Theorem \ref{dkcdnlnvdvbebvukenvl} we know $\pt(u) = 0$. Hence if a supermanifold atlas modelled on $(X, T^*_{X,-})$ defines a non-vanishing obstruction to splitting, it must come from a global section of $\Ac_{T^*_{X, -}; +}$ and, in particular, lie in $H^1\big(X, \Ac_{T^*_{X, -};-}\big)$, i.e., the supermanifold atlas must have odd order. Hence, the obstruction to splitting any supermanifold atlas of even order must vanish. 
\end{proof}

\noindent
The contrapositive statement to Theorem \ref{fvndvkbvubvknenvkno} is then the following corollary on the non-vanishing of the obstruction cohomology.

\begin{COR}\label{fg39fh983hf8380f30j}
Let $(X, T^*_{X, -})$ be a model and suppose there exists a non-projectable supermanifold modelled on $(X, T^*_{X, -})$. Then the first obstruction cohomology of the model $(X, T^*_{X, -})$ will not vanish. \qed
\end{COR}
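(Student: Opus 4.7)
The statement is the direct contrapositive of Theorem \ref{fvndvkbvubvknenvkno}, so my plan is simply to rephrase it that way. First I would recall Definition \ref{reiugfgf783h983hf89h30}, according to which $(X, T^*_{X, -})$ is projectable precisely when \emph{every} supermanifold modelled on it is projectable. Consequently, the existence of even a single non-projectable supermanifold modelled on $(X, T^*_{X, -})$ is exactly the negation of projectability of the model.

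Next I would appeal to Theorem \ref{fvndvkbvubvknenvkno}, which asserts the implication
\[
{\bf H}^1_\pt\big(X, \Ac_{T^*_{X, -}}\big) = (0) \;\Longrightarrow\; (X, T^*_{X, -})~\text{is projectable}.
\]
Taking contrapositives, non-projectability of the model forces ${\bf H}^1_\pt(X, \Ac_{T^*_{X, -}}) \neq (0)$. Combining these two observations gives the desired conclusion.

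There is essentially no obstacle here since the corollary is a formal consequence of the preceding theorem together with the definition of a projectable model; the only ``content'' in the argument is the unpacking of Definition \ref{reiugfgf783h983hf89h30}. I would keep the proof to two lines, of the form: \emph{By Definition \ref{reiugfgf783h983hf89h30}, the hypothesis means $(X, T^*_{X, -})$ is not projectable as a model. The claim now follows by contraposition from Theorem \ref{fvndvkbvubvknenvkno}.}
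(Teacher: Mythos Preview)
Your proposal is correct and matches the paper's own treatment exactly: the paper simply declares the corollary to be the contrapositive of Theorem \ref{fvndvkbvubvknenvkno} and closes with a \qed, giving no further argument. Your unpacking of Definition \ref{reiugfgf783h983hf89h30} to identify non-projectability of the model is precisely the one-line justification implicit in the paper.
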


\begin{EX}\label{dkvndbvjkbukvbuevevnkek}
The subject of the works by Donagi and Witten in \cite{DW1, DW2} is to show that moduli space of super Riemann surfaces $\Mfr_g$ is non-projectable in genus $g\geq 5$.  Now $\Mfr_g$ is a superspace modelled on the moduli space of spin curves $\Scl\Mcl_g$ and vector bundle $T^*_{\Scl\Mcl_g, -}$ whose fiber over a spin curve $(C, T_C^{1/2})$ is the $1$-cohomology $T^*_{\Scl\Mcl_g, -}|_{(C, T_C^{1/2})} = H^1(C, T_C^{1/2})^\vee$. With Corollary $\ref{fg39fh983hf8380f30j}$ we can then deduce: when $g\geq 5$, the first obstruction cohomology of the model $\big(\Scl\Mcl_g ,T^*_{\Scl\Mcl_g, -}\big)$ is non-vanishing. 
\end{EX}

\subsection{A Batchelor-Type Theorem}
We have so far been concerned with complex-analytic (i.e., holomorphic or algebraic) supermanifolds. These are supermanifolds whose structure sheaves are sheaves of complex-analytic functions. If we relax this condition to smooth functions we obtain the notion of a `smooth supermanifold' (c.f., Remark \ref{dkdjvjdkbvkbdvbebvkun}). A classical result in the category of smooth supermanifolds is \emph{Batchelor's theorem}, which originally appeared in \cite{BAT}. It states:

\begin{THM}\label{rhf894hg89hf3j09j3f}
Any smooth supermanifold is split.\qed
\end{THM}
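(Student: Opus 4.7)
The plan is to reduce Batchelor's theorem to the framework developed in the earlier sections, specialised to the smooth category. In that category one must replace the structure sheaf $\Oc_X$ by the sheaf $C^\infty_X$ of smooth functions, and reinterpret $T^*_{X,-}$ as a smooth vector bundle. All of Green's constructions and the AQ-series $\mathrm G_{T^*_{X,-}}$ transfer verbatim, because they are built out of $\Zbb_2$-grading preserving automorphisms of $\wedge^\bt T^*_{X,-}$, which makes sense in any ringed-space setting.

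The crux of the argument is the \emph{acyclicity} of the obstruction sheaves. In the smooth category, the linearisations $\Ac_{T^*_{X,-},j} = \mathrm G_{T^*_{X,-},j}/\mathrm G_{T^*_{X,-},j+1}$ are sheaves of $C^\infty_X$-modules (they are locally isomorphic to $\mathcal End~T^*_{X,-}\otimes \wedge^j T^*_{X,-}$ and similar tensorial constructions). Since $C^\infty_X$ admits partitions of unity on any smooth manifold $X$, every sheaf of $C^\infty_X$-modules is fine. Hence
\[
H^n\big(X, \Ac_{T^*_{X,-},j}\big) = (0) \quad \mbox{for all } n\geq1 \mbox{ and all } j.
\]
This is the key analytic input which is unavailable in the holomorphic setting.

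Next I would feed this vanishing into the non-abelian long exact sequence of pointed sets associated to the short exact sequence of sheaves of groups
\[
\{e\}\lra \mathrm G_{T^*_{X,-},j+1}\lra \mathrm G_{T^*_{X,-},j}\lra \Ac_{T^*_{X,-},j}\lra\{e\}.
\]
The relevant piece is
\[
H^0\big(X,\Ac_{T^*_{X,-},j}\big) \lra \mbox{\v H}^1\big(X,\mathrm G_{T^*_{X,-},j+1}\big) \lra \mbox{\v H}^1\big(X,\mathrm G_{T^*_{X,-},j}\big) \lra H^1\big(X,\Ac_{T^*_{X,-},j}\big) = (0).
\]
Exactness at the third term tells us the map
$\mbox{\v H}^1(X,\mathrm G_{T^*_{X,-},j+1}) \to \mbox{\v H}^1(X,\mathrm G_{T^*_{X,-},j})$
is surjective onto the preimage of the basepoint, which here is the entire target. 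Now, letting $q = \mathrm{rank}~T^*_{X,-}$, Green's result recalled earlier ensures $\mathrm G_{T^*_{X,-},k}$ is the trivial sheaf for all $k>q$, so $\mbox{\v H}^1(X,\mathrm G_{T^*_{X,-},q+1}) = \{e\}$. A finite descending induction on $j$, from $j=q+1$ down to $j=2$, using the above surjectivity at each stage, then yields
\[
\mbox{\v H}^1\big(X,\Gc_{T^*_{X,-}}\big) = \mbox{\v H}^1\big(X,\mathrm G_{T^*_{X,-},2}\big) = \{e\}.
\]
Combining this with Green's classification (Theorem \ref{rfg78f93jf09j3903f3f3}, whose smooth analogue is the actual content of \cite{BAT}), every smooth supermanifold modelled on $(X,T^*_{X,-})$ lies in the $\Aut~T^*_{X,-}$-orbit of the basepoint, hence is isomorphic to the split model $S(X,T^*_{X,-})$.

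The one subtle point, and the main obstacle, is the passage from surjectivity of a pointed-set map to triviality of the source at each inductive step: in non-abelian cohomology one must be careful that ``$\mbox{\v H}^1(X,\mathrm G_{T^*_{X,-},j+1})$ surjects onto $\mbox{\v H}^1(X,\mathrm G_{T^*_{X,-},j})$'' does not by itself trivialise the latter. The inductive step works because we actually have the stronger statement that the source is a singleton, which propagates to the target via surjectivity. Apart from this bookkeeping, the smooth category does all the heavy lifting via fineness.
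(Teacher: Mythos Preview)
Your proposal is correct and aligns with the paper's own treatment. The paper does not prove Theorem \ref{rhf894hg89hf3j09j3f} directly (it is cited from \cite{BAT} with a \qed), but in the paragraph following Lemma \ref{rfg783fh93hf983h03} the paper remarks that Batchelor's theorem follows from that lemma ``since sheaves of modules over smooth functions are fine, i.e., have acyclic sheaf cohomology'' --- exactly the mechanism you invoke. Your argument is simply a more explicit unpacking of Lemma \ref{rfg783fh93hf983h03} itself: the descending induction on $j$ using the non-abelian long exact sequence is the cohomological translation of the bullet points recorded before Question \ref{fbvuirbvuviuneinvoee} (vanishing obstruction $\Rightarrow$ lift to next order; eventually exceed $q = \mathrm{rank}~T^*_{X,-}$; hence strongly split). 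One minor slip: in your final paragraph you write ``triviality of the source'' where you mean ``triviality of the target'' --- the inductive hypothesis gives the source trivial, and surjectivity then forces the target trivial, as you correctly state in the next sentence.
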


\noindent
The subtlety in generalising Theorem \ref{rhf894hg89hf3j09j3f} to the complex-analytic category is that the splitting from Theorem \ref{rhf894hg89hf3j09j3f} need not be analytic. Now note that Theorem \ref{rhf894hg89hf3j09j3f} can be formulated as a statement about a class of supermanifolds. It leads therefore to the following definition in analogy with Definition \ref{reiugfgf783h983hf89h30} of projectable models.

\begin{DEF}
\emph{A model $(X, T^*_{X, -})$ is said to be \emph{split} if every supermanifold modelled on $(X, T^*_{X, -})$ is split.}
\end{DEF}
 
\noindent
Batchelor's theorem in Theorem \ref{rhf894hg89hf3j09j3f} can now be phrased: \emph{any smooth model $(X, T^*_{X, -})$ is split}.\footnote{Following Remark \ref{dkdjvjdkbvkbdvbebvkun}, a model $(X, T^*_{X, -})$ is smooth if $X$ is taken to be a smooth manifold and $T^*_{X, -}$ a smooth vector bundle.} Hence, results involving the deduction of splitness of models might be termed `Batchelor-type theorems'. An elementary such theorem is the following (for models now in the complex analytic category).

\begin{LEM}\label{rfg783fh93hf983h03}
Let $(X, T^*_{X, -})$ be a model and suppose its obstruction space vanishes, i.e., that $H^1\big(X, \Ac_{T^*_{X, -}}\big) = (0)$. Then $(X, T^*_{X, -})$ is split.\qed
\end{LEM}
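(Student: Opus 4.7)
The plan is to take any supermanifold modelled on $(X, T^*_{X, -})$ and, using the vanishing of the obstruction space, inductively lift one of its representing atlases up through the tower $\mbox{\v H}^1(X, \mathrm G_{T^*_{X, -}, j})$ until we reach an order at which the corresponding sheaf is trivial. This will force the supermanifold to represent the base-point in $\mbox{\v H}^1(X, \Gc_{T^*_{X, -}})$ and hence, by Green's classification, to be split.

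To implement this, I would begin with a supermanifold $(\Ufr \to \Xfr)$ modelled on $(X, T^*_{X, -})$. By Theorem \ref{rfg78f93jf09j3903f3f3} its isomorphism class corresponds to an $\Aut~T^*_{X, -}$-orbit in $\mbox{\v H}^1(X, \Gc_{T^*_{X, -}})$, which, since $\Gc_{T^*_{X, -}} = \mathrm G_{T^*_{X, -}, 2}$, is represented by some $2$nd-order atlas $x_2 \in \mbox{\v H}^1(X, \mathrm G_{T^*_{X, -}, 2})$. I would then proceed by induction on $j$: given a $j$-th order representative $x_j$ of $\Xfr$, its obstruction to splitting $\om_{j*}(x_j)$ lies in the $j$-th graded piece of the obstruction space $H^1(X, \Ac_{T^*_{X, -}})$, which vanishes by hypothesis. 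Exactness of the sequence
$\mbox{\v H}^1(X, \mathrm G_{T^*_{X, -}, j+1}) \to \mbox{\v H}^1(X, \mathrm G_{T^*_{X, -}, j}) \stackrel{\om_{j*}}{\to} H^1(X, \Ac_{T^*_{X, -}, j})$
then supplies a $(j+1)$-th order lift $x_{j+1}$ of $x_j$, and the compatibility diagram \eqref{brtgtyfhf7hfhfoij3} of the forgetful maps $\tau_{j*}$ with the inclusions $\mathrm G_{T^*_{X, -}, j+1} \subset \mathrm G_{T^*_{X, -}, j}$ ensures $x_{j+1}$ still represents the same supermanifold $\Xfr$.

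Iterating, I would reach a lift $x_{q+1} \in \mbox{\v H}^1(X, \mathrm G_{T^*_{X, -}, q+1})$, where $q = \mathrm{rank}~T^*_{X, -}$. Since $\mathrm G_{T^*_{X, -}, q+1}$ is the trivial sheaf (by the first bulleted property of $\mathrm G_{T^*_{X, -}}$ recalled after Proposition \ref{rh74hf983j0j309d930k}), its first cohomology is the one-point pointed set, so $x_{q+1}$ is forced to be the base-point. Pushing down via $\tau_{(q+1)*}$, the class of $\Xfr$ in $\mbox{\v H}^1(X, \Gc_{T^*_{X, -}})$ is also the base-point, so Theorem \ref{rfg78f93jf09j3903f3f3} places $\Xfr$ in the $\Aut~T^*_{X, -}$-orbit of the split model. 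The argument is essentially book-keeping with the machinery already assembled; the only point requiring a brief verification is that each successive lift continues to represent the same supermanifold, and this is immediate from diagram \eqref{brtgtyfhf7hfhfoij3}. Consequently there is no genuine obstacle — the hypothesis $H^1(X, \Ac_{T^*_{X, -}}) = (0)$ removes every obstruction to lifting in one stroke.
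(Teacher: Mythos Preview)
Your argument is correct and is precisely the one the paper has in mind: the lemma is stated with a bare \qed, but the three bullet points following Definition \ref{rgf874f9h39830j39} (vanishing obstruction $\Rightarrow$ $(j{+}1)$-th order structure; atlases of order $>q$ are strongly split) together with the exact sequence \eqref{fcndkvbhjbvjrvjkenvke} and the compatibility diagram \eqref{brtgtyfhf7hfhfoij3} give exactly the inductive lifting you carry out. There is nothing to add.
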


\noindent
The assumptions in Lemma \ref{rfg783fh93hf983h03} are quite strong. However, Batchelor's theorem in Theorem \ref{rhf894hg89hf3j09j3f} can be deduced from Lemma \ref{rfg783fh93hf983h03} since sheaves of modules over smooth functions are fine, i.e., have acyclic sheaf cohomology. We conclude with a stronger Batchelor type theorem now, invoking the property of `goodness' of a model and its first obstruction cohomology.

\begin{THM}\label{fnvburbvuibuinenfneoifeoo}
Let $(X, T^*_{X, -})$ be a good model with ${\bf H}^1_\pt\big(X, \Ac_{T^*_{X, -}}\big) = (0)$. Then $(X, T^*_{X, -})$ is split.
\end{THM}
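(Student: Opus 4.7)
The plan is to show that any supermanifold $\Xfr$ modelled on $(X, T^*_{X, -})$ actually represents the base-point of $\mbox{\v H}^1(X, \Gc_{T^*_{X, -}})$, and hence is isomorphic to the split model. To that end, start with any $2$nd-order atlas $x_2 \in \mbox{\v H}^1(X, \mathrm G_{T^*_{X, -}, 2})$ representing $\Xfr$, available since $\Gc_{T^*_{X, -}} = \mathrm G_{T^*_{X, -}, 2}$. The aim is to inductively lift $x_2$ up the tower $\mathrm G_{T^*_{X, -}, j}$ until reaching some $k > q := \mathrm{rank}\,T^*_{X, -}$; at that point, by the first bullet point following \eqref{fvnrnvnkjnkfjnwjkfnjkfnk}, the sheaf $\mathrm G_{T^*_{X, -}, k}$ is trivial and $\mbox{\v H}^1(X, \mathrm G_{T^*_{X, -}, k})$ is a one-point set, forcing the top of the tower to be the base-point.

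The crucial input is Theorem \ref{rfg7gfi3hfiuwhfioh}: since $(X, T^*_{X, -})$ is good and ${\bf H}^1_\pt(X, \Ac_{T^*_{X, -}}) = (0)$, the boundary map $\pt : H^1(X, \Ac_{T^*_{X, -}}) \to H^2(X, \Ac_{T^*_{X, -}}[1])$ is injective. Because $\pt$ is $\Zbb$-graded and linear, and $H^1(X, \Ac_{T^*_{X, -}}) = \bigoplus_j H^1(X, \Ac_{T^*_{X, -}, j})$, injectivity descends to every graded piece $\pt : H^1(X, \Ac_{T^*_{X, -}, j}) \to H^2(X, \Ac_{T^*_{X, -}, j+1})$. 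On the other hand, Proposition \ref{fbiubiiidnono2o}, applied to the AQ series $\mathrm G_{T^*_{X, -}}$, asserts that for any $j$-th order atlas $x_j$ the obstruction $\om_{j*}(x_j) \in H^1(X, \Ac_{T^*_{X, -}, j})$ satisfies $\pt\,\om_{j*}(x_j) = 0$. Injectivity of $\pt$ then forces $\om_{j*}(x_j) = 0$ for every $j$ and every atlas appearing in the tower.

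Exactness of $\mbox{\v H}^1(X, \mathrm G_{T^*_{X, -}, j+1}) \to \mbox{\v H}^1(X, \mathrm G_{T^*_{X, -}, j}) \stackrel{\om_{j*}}{\to} H^1(X, \Ac_{T^*_{X, -}, j})$ therefore produces a lift $x_{j+1}$ of $x_j$ at each stage. Starting from $x_2$ and iterating, one assembles a tower $x_2, x_3, \ldots, x_k$ of atlases all descending under the functorial maps $\tau_{j*}$ (see \eqref{brtgtyfhf7hfhfoij3}) to the same class in $\mbox{\v H}^1(X, \Gc_{T^*_{X, -}})$, namely the class of $\Xfr$. Once $k > q$, triviality of $\mathrm G_{T^*_{X, -}, k}$ forces $x_k$ to be the base-point, and the diagram then places the class of $\Xfr$ at the base-point of $\mbox{\v H}^1(X, \Gc_{T^*_{X, -}})$; thus $\Xfr$ is the split model. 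The step I expect to require the most care is the consistency of the lifts: a priori a non-abelian lift $x_{j+1}$ of $x_j$ is not unique and different choices represent the same supermanifold only up to the relevant action; but since the argument only needs \emph{some} lift at each stage and uses functoriality of $\tau_{j*}$ to track the underlying class, this issue dissolves. The hidden role of goodness, invoked via Theorem \ref{rfg7gfi3hfiuwhfioh}, is precisely to rule out the `exotic' phenomenon whereby a lift could carry vanishing obstruction and yet obstruct splitness further up the tower.
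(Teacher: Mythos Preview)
Your argument is correct and follows essentially the same strategy as the paper: both proofs establish that the map $\pt : H^1(X, \Ac_{T^*_{X,-}}) \to H^2(X, \Ac_{T^*_{X,-}}[1])$ is injective under the hypotheses, whence every obstruction to splitting vanishes and any atlas lifts step by step to arbitrarily high order, hence to the base-point. The only difference is packaging: you invoke Theorem~\ref{rfg7gfi3hfiuwhfioh} directly to obtain injectivity of $\pt$, whereas the paper re-derives this injectivity by combining the Vanish--Lift--Vanish principle (Theorem~\ref{dkcdnlnvdvbebvukenvl}) with the external characterisation of good models from \cite{BETTHIGHOBS} (namely, that $\dt$ is trivial for all $j$, forcing $\img\,\pt^1 = 0$ and hence $\ker\,\pt = 0$ once ${\bf H}^1_\pt = 0$). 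Your route is arguably cleaner and more self-contained, since Theorem~\ref{rfg7gfi3hfiuwhfioh} is already available in the paper and uses only the \emph{definition} of a good model rather than the cited characterisation; the paper's route, on the other hand, makes the role of the odd/even vanishing more visible.
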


\begin{proof}
We will show that any supermanifold atlas modelled on $(X, T^*_{X, -})$ will be strongly split (see Definition \ref{ruieff98fh83f309jf3jf4rf}) from whence the present theorem will follow. Recall that for each $j$ we have a commutative diagram (c.f., \eqref{Prfg84gf848f7h4fh9}),
\begin{align}
\xymatrix{
& \mbox{\v H}^1\big(X, \mathrm G_{T^*_{X, -}, j+1}\big)\ar[dr]^{\om_*}&
\\
H^0\big(X, \Ac_{T^*_{X, -}, j}\big)\ar[ur]^\dt
\ar[rr]_{\pt^1_{j+1}} & & 
H^1\big(X, \Ac_{T^*_{X, -}, j+1}\big)
}
\label{kcndjvbjkbkvbdjvndkcnekl22222}
\end{align}
By Theorem \ref{dkcdnlnvdvbebvukenvl} we know that $\pt^1_{j+1} = 0$ for $j$ odd in \eqref{kcndjvbjkbkvbdjvndkcnekl22222}. Now a central result in \cite{BETTHIGHOBS} is in the characterisation of good models. It was found that: \emph{a model $(X, T^*_{X, -})$ is good if and only if $\dt$ in \eqref{kcndjvbjkbkvbdjvndkcnekl22222} is trivial for all $j$}. Hence if $(X, T^*_{X, -})$ is a good model we find by commutativity of \eqref{kcndjvbjkbkvbdjvndkcnekl22222} that  $\pt^1_{j+1} = 0$ for \emph{all $j$}. Hence that $\img\big\{\pt : H^1\big(X, \Ac_{T^*_{X, -}}[-1]\big)\ra H^1\big(X, \Ac_{T^*_{X, -}}\big)\big\} = (0)$. Assuming in addition that ${\bf H}^1_\pt\big(X, \Ac_{T^*_{X, -}}\big) = (0)$ we have the equality in \eqref{rnii4fi7fh8fh83hf0333330} giving therefore $\ker\big\{\pt : H^1\big(X, \Ac_{T^*_{X, -}}\big)\ra H^1\big(X, \Ac_{T^*_{X, -}}[1]\big)\big\}  = (0)$. This shows that the obstruction to splitting \emph{any} supermanifold atlas modelled on $(X, T^*_{X, -})$ must vanish. Hence this supermanifold atlas must be strongly split.
\end{proof}

\begin{REM}
\emph{In the case where $X$ is a Riemann surface, Corollary \ref{fjcdkbvjdbvjhbvrbvbi} shows that the conditions in Theorem \ref{fnvburbvuibuinenfneoifeoo} are equivalent to those in Lemma \ref{rfg783fh93hf983h03}.}
\end{REM}

\appendix
\numberwithin{equation}{subsection}

\section{Proof of Vanish-Lift-Vanish}
\label{fklnvkbvjrbvnfkemlkmel}

\subsection{Proof of Proposition $\ref{rfh89hf983hf893f93}$}
The proof of Proposition \ref{rfh89hf983hf893f93} is outlined in \cite{DW1} and, motivated by this argument, a particular case is addressed in \cite[Appendix A]{BETTHIGHOBS}. We continue this argument here.

\subsubsection{Group Actions on Sheaves}
We begin with the following definition of groups acting on sheaves of modules.

\begin{DEF}\label{ldlvnkjbvjbrvbuenveio}
\emph{Let $\Fc$ be a sheaf of $\Oc_X$-modules on $X$ and fix a group $G$. We say $G$ \emph{acts on $\Fc$} if:
\begin{enumerate}[$\bt$]
	\item each $g\in G$ defines an $\Oc_X$-module homomorphism $g\cdot :\Fc\ra \Fc$;
	\item the morphism defined by the identity element $e\in G$ is the identity morphism $e\cdot = {\bf 1}_\Fc: \Fc\ra\Fc$;
	\item for any two $g, h\in G$ the following diagram commutes,
	\begin{align}
	\xymatrix{
	\ar[dr]_{(gh)\cdot}\Fc \ar[r]^{h\cdot} & \Fc\ar[d]^{g\cdot}
	\\
	&\Fc
	}
	\label{rfh4hf984hf8hf0j09}
	\end{align}
\end{enumerate}}
\end{DEF}

\noindent
Evidently, if a group $G$ acts on a sheaf $\Fc$ then commutativity in \eqref{rfh4hf984hf8hf0j09} implies that the morphism defined by $g\in G$ will be an isomorphism. By functoriality on cohomology we obtain, for each $g\in G$ and integer $n$, an isomorphism on sheaf cohomology $g\cdot_*: H^n(X, \Fc) \stackrel{\cong}{\ra} H^n(X, \Fc)$ with $g\cdot_*h\cdot_* = (gh)\cdot_*$. We will ultimately be interested in the action induced on cohomology in degrees zero and one.

\subsubsection{The Dilation Action}
Suppose $\Fc$ is a sheaf of $\Zbb$-graded, $\Oc_X$-modules on a complex space $X$. Set 
\begin{align}
\Fc = \oplus_j \Fc_j.
\label{dkncdnvkjbvjkbr}
\end{align}
Then there exists a natural action of $\Cbb^\times$ on $X$ known as \emph{dilation}. It it defined as follows: firstly, any homogeneous $f\in \Fc$ defines a $\Zbb$-parity $p(f)\in \Zbb$, where $p$ is defined by sending $f$ to its $\Zbb$-grading. Clearly the parity map depends on the choice of $\Zbb$-grading on $\Fc$ in \eqref{dkncdnvkjbvjkbr}. With this choice of grading and parity map consider:
\begin{align}
\Cbb^\times \times \Fc\lra\Fc
&&
\mbox{given by}
&&
\big(\lam, f\big)
\longmapsto
\lam^{p(f)}f
\label{cnekbvurbviurvurnvor}
\end{align}
where $f$ is homogeneous. The mapping extends to inhomogeneous sections of $\Fc$ by linearity and defines an action of the group $\Cbb^\times$ on the sheaf $\Fc$ in the sense of Definition \ref{ldlvnkjbvjbrvbuenveio}. This action is known as the \emph{dilation action}.

\begin{REM}\label{bcuyegf87gf793hf983h839}
\emph{Any sheaf of $\Oc_X$-modules admits a dilation action as we can view it as being trivially $\Zbb$-graded. Hence the choice of $\Zbb$-grading is a crucial ingredient in forming the dilation action. We will suppress any mention of the choice of $\Zbb$-grading however if it is clear from the context.}
\end{REM}

\begin{EX}\label{rivrhhg04j09j0f}
Fix a locally free sheaf $T^*_{X, -}$ on $X$. Then the exterior algebra $\Fc = \wedge^\bt T^*_{X, -}$ carries a natural $\Zbb$-grading $\wedge^\bt T^*_{X, -} = \oplus_j \wedge^jT^*_{X, -}$ into exterior powers and hence a natural dilation action as in \eqref{cnekbvurbviurvurnvor}. On homogeneous components we have,
\begin{align*}
\Cbb^\times\times \wedge^j T^*_{X, -} 
\lra 
\wedge^j T^*_{X, -}
&&
\big(\lam, u\big) \longmapsto \lam^j u
\end{align*}
for all $j$.
\end{EX}

\noindent
We note that the dilation action in \eqref{cnekbvurbviurvurnvor} extends to an action on tensor products. If $\Fc$ and $\Fc^\p$ are $\Zbb$-graded the so is their tensor product. Let $p$ and $p^\p$ denote the respective parity maps on $\Fc$ and $\Fc^\p$. Then $p+p^\p$ is the parity map on $\Fc\otimes \Fc^\p$ and so the dilation action on the tensor product is:
\begin{align}
\Cbb\times \Fc\otimes \Fc^\p
\lra 
\Fc\otimes\Fc^\p
&&
\mbox{given by}
&&
\big(\lam, f\otimes f^\p\big)
\longmapsto \lam^{p(f) + p^\p(f^\p)} f\otimes f^\p.
\label{jdkbkjbvoinvioenviep}
\end{align}
If $\Fc$ is $\Zbb$-graded as in \eqref{dkncdnvkjbvjkbr}, then we consider its dual be inversely graded to $\Fc$, i.e., that 
\begin{align}
\Fc^*  = \oplus_j \Fc^*_{-j}.
\label{kfvbjkdbvkjrbvuirbvuenoiepe}
\end{align}
Hence if $p$ is the parity map for $\Fc$ then $-p$ is the parity map for its $\Oc_X$-dual $\Fc^*$. Its endomorphisms therefore dilate as follows:
\begin{align}
\Cbb^\times\times \mathcal End_{\Oc_X}\Fc
\lra \mathcal End_{\Oc_X}\Fc
&&
\mbox{with}
&&
\big(\lam, f\otimes f^*\big)
\longmapsto 
\lam^{p(f) - p(f^*)}f\otimes f^*.
\label{fbciebcgcg7c9839h}
\end{align}
With this observation we continue Example \ref{rivrhhg04j09j0f}.

\begin{EX}\label{djckbvdkfdbvbbcnc}
The $j$-th exterior power $\wedge^j T^*_{X,-}$ is the $j$-th graded component of $\wedge^\bt T^*_{X, -}$. Dualising, we see that $\wedge^j T_{X, -}$ is the $j$-th graded component of $(\wedge^\bt T^*_{X, -})^*$. Evidently, by \eqref{fbciebcgcg7c9839h} we find for each $j$ and $k$, the dilation:
\begin{align*}
\Cbb^\times 
\times 
\wedge^j T^*_{X, -}\otimes \wedge^k T_{X, -}
\lra 
\wedge^j T^*_{X, -}\otimes \wedge^k T_{X, -}
&&
\mbox{given by}
&&
\big(\lam, u\otimes v\big)
\longmapsto
\lam^{j-k}u\otimes v.
\end{align*}
Evidently, the endomorphisms $\mathcal End_{\Oc_X}T^*_{X, -}$ are invariant under dilation.
\end{EX}

\subsubsection{On Obstruction Sheaves}
Green in \cite{GREEN} derived the following important characterisation of the obstruction sheaves. For $\Ac_{T^*_{X, -}}$ the obstruction sheaf of the model $(X, T^*_{X -})$ and $\Ac_{T^*_{X, -}, j}$ its $j$-th graded component there exists an isomorphism
\[
\Ac_{T^*_{X, -}, j}
\cong 
\left\{
\begin{array}{ll}
\wedge^j T^*_{X, -}\otimes T_X
&
\mbox{if $j$ is even}
\\
\wedge^j T^*_{X, -}\otimes T^*_{X, -}
&
\mbox{if $j$ is odd}.
\end{array}
\right.
\]
In viewing $T_X$ as trivially $\Zbb$-graded (c.f., Remark \ref{bcuyegf87gf793hf983h839}) we obtain the following action on the obstruction sheaf from \eqref{jdkbkjbvoinvioenviep} and Example \ref{djckbvdkfdbvbbcnc}.


\begin{LEM}\label{rfg784gf9hf03j099j33333}
For any $j$ we have the action
\begin{align*}
\Cbb^\times \times  \Ac_{T^*_{X, -}, j} \lra \Ac_{T^*_{X, -}, j}
&&
(\lam, w) \longmapsto 
\left\{
\begin{array}{ll}
\lam^jw&\mbox{if $j$ is even}
\\
\lam^{j-1}w&\mbox{if $j$ is odd}
\end{array}
\right.
\end{align*}
\qed
\end{LEM}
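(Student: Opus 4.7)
The plan is to bootstrap off Green's explicit identification of the obstruction sheaves recorded immediately before the lemma, combining it with the tensor-product formula \eqref{jdkbkjbvoinvioenviep} and the dualization convention \eqref{kfvbjkdbvkjrbvuirbvuenoiepe} governing the dilation action. Once the $\Zbb$-weight of each tensor factor is pinned down, the weight on the product is simply the sum, so the lemma reduces to two small arithmetic verifications split along the parity of $j$.

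For $j$ even, Green's identification writes $\Ac_{T^*_{X,-},j}\cong \wedge^j T^*_{X,-}\otimes T_X$. By Example \ref{rivrhhg04j09j0f} the parity map on $\wedge^j T^*_{X,-}$ sends a homogeneous section to $j$, and by Remark \ref{bcuyegf87gf793hf983h839} the tangent sheaf $T_X$ is viewed as trivially $\Zbb$-graded, hence of weight $0$. The tensor-product rule \eqref{jdkbkjbvoinvioenviep} then gives dilation weight $j+0 = j$, matching the first case of the claimed formula. For $j$ odd the second tensor factor is $T_{X,-}$, the $\Oc_X$-dual of $T^*_{X,-}$, which by the convention \eqref{kfvbjkdbvkjrbvuirbvuenoiepe} carries weight $-1$ (it is the $(-1)$-graded piece of $(\wedge^\bt T^*_{X,-})^*$; compare Example \ref{djckbvdkfdbvbbcnc} at $k=1$). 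Combining this with the weight $j$ of $\wedge^j T^*_{X,-}$ via \eqref{jdkbkjbvoinvioenviep} yields total weight $j-1$, which is the odd case of the statement.

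The only real obstacle here is a bookkeeping one: I must verify that the $\Zbb$-grading on $\Ac_{T^*_{X,-},j}$ implicit in the dilation action is the one induced from Green's construction via the ideal filtration \eqref{fvnrnvnkjnkfjnwjkfnjkfnk}, rather than some independent grading imported from the tensor factors. Since Green's identification is itself derived from the same filtration (the $j$-th subquotient $J^j/J^{j+1}$ is what gets identified with the tensor product above, and the parity map agrees with the filtration degree on homogeneous sections), this compatibility is automatic, and the two short arithmetic computations above finish the proof.
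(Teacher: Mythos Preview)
Your argument is correct and matches the paper's approach exactly: the lemma is stated with a \qed, and the sentence preceding it indicates that it follows from the tensor-product dilation formula \eqref{jdkbkjbvoinvioenviep}, the trivial grading on $T_X$, and Example \ref{djckbvdkfdbvbbcnc} --- precisely the ingredients you invoke. One minor point: the paper's displayed Green identification for odd $j$ reads $\wedge^j T^*_{X,-}\otimes T^*_{X,-}$, whereas you (correctly, in light of Example \ref{djckbvdkfdbvbbcnc} with $k=1$ and the computation \eqref{dvbdbvdbvjknee} in the proof of Proposition \ref{fmdvbmdbvbmvbekjve}) take the second factor to be $T_{X,-}$; this appears to be a typo in the paper rather than a flaw in your argument.
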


\noindent
The action in Lemma \ref{rfg784gf9hf03j099j33333} gives an action on global sections. We wish to compare this latter action with that on $\mbox{\v H}^1\big(X, \mathrm G_{T^*_{X, -}, j+1}\big)$. Our objective is thus to prove the following:

\begin{PROP}\label{fmdvbmdbvbmvbekjve}
For any $j$ there exists an action of $\Cbb^\times$ on $\mbox{\emph{\v H}}^1\big(X, \mathrm G_{T^*_{X, -}, j+1}\big)$ commuting the following diagram,
\[
\xymatrix{
\ar[d]_{1\times\dt} \Cbb^\times \times H^0\big(X, \Ac_{T^*_{X, -}, j}\big) \ar[rr] & &  H^0\big(X, \Ac_{T^*_{X, -}, j}\big)\ar[d]^{\dt}
\\
\Cbb^\times \times \mbox{\emph{\v H}}^1\big(X, \mathrm G_{T^*_{X, -}, {j+1}}\big)\ar[d]_{1\times \om_*} \ar[rr] & & \mbox{\emph{\v H}}^1\big(X, \mathrm G_{T^*_{X, -}, {j+1}}\big) \ar[d]^{\om_*}
\\
\Cbb^\times \times H^1\big(X, \Ac_{T^*_{X, -}, j+1}\big) \ar[rr] & &  H^1\big(X, \Ac_{T^*_{X, -}, j+1}\big)
}
\]
\end{PROP}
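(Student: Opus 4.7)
The plan is to manufacture the $\Cbb^\times$-action on $\mbox{\v H}^1\big(X, \mathrm G_{T^*_{X, -}, j+1}\big)$ by lifting the dilation action on $\wedge^\bt T^*_{X, -}$ to an action on the sheaf of groups $\mathrm G_{T^*_{X, -}, j+1}$ itself, and then passing to \v Cech cohomology via functoriality. Explicitly, for each $\lam\in \Cbb^\times$ let $d_\lam$ denote the $\Zbb$-graded dilation of Example \ref{rivrhhg04j09j0f}, i.e.\ the sheaf automorphism of $\wedge^\bt T^*_{X, -}$ acting as multiplication by $\lam^k$ on $\wedge^kT^*_{X, -}$. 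Being $\Zbb$-graded it preserves the $\Zbb_2$-grading and so lies in $\mathcal Aut_{\Zbb_2}\wedge^\bt T^*_{X, -}$, and $\lam\mapsto d_\lam$ is a group homomorphism. I make $\Cbb^\times$ act on $\mathcal Aut_{\Zbb_2}\wedge^\bt T^*_{X, -}$ by conjugation, $\lam\cdot\al \stackrel{\Delta}{=} d_\lam\circ\al\circ d_\lam^{-1}$.

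Next I check that this conjugation action restricts to an action on the subgroup $\mathrm G_{T^*_{X, -}, k}$ for every $k$. The point is that $d_\lam$ is $\Zbb$-graded, hence preserves the irrelevant ideal $J$ and all of its powers. If $\al$ satisfies $\al(u) - u \in J^k$ for every $u$, then for any $u$ we have $d_\lam \al d_\lam^{-1}(u) - u = d_\lam\big(\al(d_\lam^{-1}u) - d_\lam^{-1}u\big) \in d_\lam(J^k) = J^k$. Hence conjugation by $d_\lam$ restricts to an automorphism of each sheaf of groups $\mathrm G_{T^*_{X, -}, k}$, and functoriality of \v Cech cohomology gives the sought action on $\mbox{\v H}^1\big(X, \mathrm G_{T^*_{X, -}, j+1}\big)$: a cocycle $(g_{ik})$ is sent to $(d_\lam g_{ik} d_\lam^{-1})$.

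Commutativity of the two squares in the claimed diagram will follow from the naturality of the connecting map $\dt$ and the linearisation map $\om_*$ with respect to morphisms of short exact sequences of sheaves of groups. Both maps arise from the short exact sequence $\{e\}\to \mathrm G_{T^*_{X, -}, j+1}\to \mathrm G_{T^*_{X, -}, j}\to \Ac_{T^*_{X, -}, j}\to \{e\}$, and that sequence is manifestly $\Cbb^\times$-equivariant under the conjugation action, once it is matched with the induced action on $\Ac_{T^*_{X, -}, j}$ in the quotient.

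The step I expect to be the main obstacle is verifying that the $\Cbb^\times$-action induced by conjugation on the quotient $\Ac_{T^*_{X, -}, j} = \mathrm G_{T^*_{X, -}, j}/ \mathrm G_{T^*_{X, -}, j+1}$ matches the dilation action recorded in Lemma \ref{rfg784gf9hf03j099j33333}. For this one writes $\al = \id + \phi + O(J^{j+1})$ and tracks the action on the superalgebra generators. For $j$ even the leading obstruction is captured by a derivation $\Oc_X \to \wedge^j T^*_{X, -}$, and since $d_\lam$ acts trivially on $\Oc_X$ and by $\lam^j$ on $\wedge^j T^*_{X, -}$, a short computation gives $d_\lam\al d_\lam^{-1} = \id + \lam^j\phi + O(J^{j+1})$, recovering the weight $\lam^j$. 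For $j$ odd the leading obstruction is instead a derivation $T^*_{X, -} \to \wedge^j T^*_{X, -}$, and the weight-$1$ factor coming from $d_\lam^{-1}$ on $T^*_{X, -}$ shifts the exponent to $\lam^{j-1}$, matching the parity-dependent weight in Lemma \ref{rfg784gf9hf03j099j33333}. Once this compatibility is established, the three vertical actions in the proposition are governed by the same equivariance principle and the diagram commutes.
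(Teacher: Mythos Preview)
Your proposal is correct and follows essentially the same route as the paper: define the $\Cbb^\times$-action on $\mathrm G_{T^*_{X,-},k}$ by conjugation with the dilation $d_\lam$, check it preserves each $J^k$, compute the induced weight on the quotient $\Ac_{T^*_{X,-},j}$ (distinguishing the even and odd cases exactly as you do) to match Lemma \ref{rfg784gf9hf03j099j33333}, and then invoke naturality of the long exact sequence on cohomology for the resulting $\Cbb^\times$-equivariant short exact sequence. The paper's argument is the same in all essential respects; your phrasing of the odd case in terms of a derivation $T^*_{X,-}\to\wedge^jT^*_{X,-}$ is just a restatement of the paper's observation that $\al|_{\Oc_X}$ is trivial when $j$ is odd.
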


\begin{proof}
We will firstly show that $\Cbb^\times$ acts on the sheaf of groups $\mathrm G_{T^*_{X, -}, k}$ for any $k$. We recall the definition of $\mathrm G_{T^*_{X, -}, k}$ from \eqref{fvnrnvnkjnkfjnwjkfnjkfnk} below for convenience,
\begin{align}
\mathrm G_{T^*_{X, -}, k}
=
\big\{
\al\in \mathcal Aut_{\Zbb_2}\wedge^\bt T^*_{X, -}\mid \al(u) - u\in J^k,~\forall u\in \wedge^\bt T^*_{X, -}\big\}
\end{align}
where $J\subset \wedge^\bt T^*_{X, -}$ is the irrelevant ideal. Now from Example \ref{rivrhhg04j09j0f} we see how $\Cbb^\times$ acts on the exterior algebra $\wedge^\bt T^*_{X, -}$. This action clearly preserves the ideal $J$. Moreover, for any $\al\in \mathrm G_{T^*_{X, -},k}$, since $\al\equiv {\bf 1}$ modulo $J^k$ it follows that $\lam\al\lam^{-1}\equiv \lam{\bf 1}\lam^{-1} = {\bf 1}$ modulo $J^k$ and for any $\lam\in\Cbb^\times$. Hence that $\lam\al\lam^{-1}\in \mathrm G_{T^*_{X, -}, k}$ for any $\lam\in\Cbb^\times$. As can be checked, the conjugation $\al\mapsto\lam\al\lam^{-1}$ defines a group action $\Cbb^\times$ on $\mathrm G_{T^*_{X, -}, k}$ for each $k$. Now more explicitly, since $\al(u) - u\in J^k$ we can write $\al = {\bf 1} + D + \ldots$, where $D : \wedge^\bt T^*_{X, -}\ra \wedge^\bt T^*_{X, -}[k]$ and the ellipses contain terms sending $\wedge^\bt T^*_{X, -}\ra \wedge^\bt T^*_{X, -}[\ell]$ for $\ell> k$. Modulo $J^{k+1}$, the term $D$ will define a derivation. Now suppose $k$ is even. Then for any $u\in \wedge^\bt T^*_{X,-}$ the $0$-th graded component maps to an element in $\wedge^kT^*_{X, -}$. Therefore, modulo $J^{k+1}$ we find
\begin{align}
(\lam\al\lam^{-1})(u)
=
\lam\big(\al(\lam^{-1} u)\big)
&=
\lam\big({\bf 1}(\lam^{-1} u) + D(\lam^{-1} u) + \cdots\big)
\notag
\\
&=
u + \lam^k D(u) \mod J^{k+1}
\label{rf847gf78gf97h398fh3}
\end{align}
If $k$ is odd the restriction of $\al$ to $\wedge^0T^*_{X, -} = \Oc_X$ is trivial since it preserves the $\Zbb_2$-grading $\wedge^\bt T^*_{X, -}$. Hence for any $u\in \wedge^\bt T^*_{X, -}$ we find
\begin{align}
(\lam\al\lam^{-1})(u)
&= 
\lam\big({\bf 1}(\lam^{-1} u) + D(\lam^{-1} u) + \cdots\big)
\notag
\\
&=
u +  \lam^{k-1} D(u) \mod J^{k+1}
\label{dvbdbvdbvjknee}
\end{align}
Comparing \eqref{rf847gf78gf97h398fh3} and \eqref{dvbdbvdbvjknee} with the action in Lemma \ref{rfg784gf9hf03j099j33333} we see that the projection $ \mathrm G_{T^*_{X, -}, k}\ra  \mathrm G_{T^*_{X, -}, k}/ \mathrm G_{T^*_{X, -}, k+1} = \Ac_{T^*_{X, -}, k}$ will be $\Cbb^\times$-equivariant, i.e., that the following diagram will commute 
\[
\xymatrix{
\Cbb^\times \times\mathrm G_{T^*_{X, -}, k} \ar[d] \ar[r]  & \mathrm G_{T^*_{X, -}, k}\ar[d]
\\
\Cbb^\times \times \Ac_{T^*_{X, -}, k} \ar[r] &  \Ac_{T^*_{X, -}, k}.
}
\]
Thus for each $\lam\in \Cbb^\times$ we get an isomorphism of short exact sequences of sheaves,
\begin{align}
\xymatrix{
\{e\}\ar[r] & \mathrm G_{T^*_{X, -}, k+1} \ar[d]_{\lam\cdot} \ar[r] & \mathrm G_{T^*_{X, -}, k} \ar[d]_{\lam\cdot} \ar[r] & \Ac_{T^*_{X, -}, k}\ar[d]_{\lam\cdot} \ar[r] & \{e\}
\\
\{e\}\ar[r] & \mathrm G_{T^*_{X, -}, k+1} \ar[r] & \mathrm G_{T^*_{X, -}, k} \ar[r] & \Ac_{T^*_{X, -}, k}\ar[r] & \{e\}
}
\label{fdvjbrjvbbvkjenjkvnekj}
\end{align}
giving then on cohomology the following commutative diagrams,
\begin{align*}
\xymatrix{
\ar[d]_\dt H^0\big(X, \Ac_{T^*_{X, -}, k}\big) \ar[r]^{\lam_*}
& H^0\big(X, \Ac_{T^*_{X, -}, k}\big) \ar[d]_\dt
\\ 
\mbox{\v H}^1\big(X, \mathrm G_{T^*_{X, -}, k+1}\big)\ar[r]^{\lam_*}
&
\mbox{\v H}^1\big(X, \mathrm G_{T^*_{X, -}, k+1}\big).
}
&&
\xymatrix{
\ar[d]_{\om_{k*}} \mbox{\v H}^1\big(X, \mathrm G_{T^*_{X, -}, k}\big) \ar[r]^{\lam_*}
& \mbox{\v H}^1\big(X, \mathrm G_{T^*_{X, -}, k}\big) \ar[d]_{\om_{k*}}
\\ 
H^1\big(X, \Ac_{T^*_{X, -}, k}\big) \ar[r]^{\lam_*}
&
H^1\big(X, \Ac_{T^*_{X, -}, k}\big).
}
\end{align*}
Commutativity of the above diagrams for each $\lam\in \Cbb^\times$ is precisely the statement in this proposition.
\end{proof}

\noindent
Proposition \ref{rfh89hf983hf893f93} will now follow from Lemma \ref{rfg784gf9hf03j099j33333} and Proposition \ref{fmdvbmdbvbmvbekjve} as follows. Firstly, recall that the boundary map $\pt : H^0\big(X, \Ac_{T^*_{X, -}, j}\big) \ra H^1\big(X, \Ac_{T^*_{X, -}, j+1}\big)$ was defined by reference to $\mbox{\v H}^1\big(X, \mathrm G_{T^*_{X, -}, j}\big)$. That is, the following diagram commutes for each $j$ (c.f., \eqref{Prfg84gf848f7h4fh9}),
\begin{align}
\xymatrix{
& \mbox{\v H}^1\big(X, \mathrm G_{T^*_{X, -}, j+1}\big)\ar[dr]^{\om_*}&
\\
H^0\big(X, \Ac_{T^*_{X, -}, j}\big)\ar[ur]^\dt
\ar[rr]_{\pt^1_{j+1}} & & 
H^1\big(X, \Ac_{T^*_{X, -}, j+1}\big)
}
\label{kcndjvbjkbkvbdjvndkcnekl}
\end{align}
Now suppose $j = 2\ell+1$ is odd. Since $\pt^1$ is linear we have by Lemma \ref{rfg784gf9hf03j099j33333} and any $w\in H^0\big(X, \Ac_{T^*_{X, -}, 2\ell+1}\big)$,
\begin{align}
\pt^1_{j+1} (\lam\cdot w) 
= 
\pt^1_{j+1}(\lam^{2\ell}w)
=
\lam^{2\ell}\pt_{j+1}^1(w).
\label{rfuhf98hf98h3f30j0}
\end{align}
But now consider that the diagram in \eqref{kcndjvbjkbkvbdjvndkcnekl} commutes. Therefore, by Lemma \ref{rfg784gf9hf03j099j33333} and Proposition \ref{fmdvbmdbvbmvbekjve} we find
\begin{align}
\pt_{j+1}^1(\lam\cdot w) = \om_*(\dt(\lam\cdot w)) = \om_*(\lam \star\dt(w)) = \lam^{2\ell+2}\om_*\dt(w) = \lam^{2\ell+2}\pt_{j+1}^1(w).
\label{rcnibviubiuvnjneoe}
\end{align}
This is compatible with in \eqref{rfuhf98hf98h3f30j0} if and only if $\pt_{j+1}^1 = 0$ for all $j = 2\ell+1$, odd. Hence the odd component of $\pt$ must vanish.\qed

\begin{REM}
\emph{The dilation action is the centrepiece of the proof of Proposition \ref{rfh89hf983hf893f93} above. By Remark \ref{bcuyegf87gf793hf983h839}, the formation of the dilation action involves a choice of grading and hence appears on a first glance to be arbitrary. We emphasise that while this might be so, what is important is the deduction of the commutative diagram in Proposition \ref{fmdvbmdbvbmvbekjve} which meaningfully incorporates our construed dilation action.}
\end{REM}

\subsection{Proof of Proposition $\ref{rgf78f79hf93h98fh39h93}$}
The argument follows along similar lines to Proposition \ref{rfh89hf983hf893f93}. Firstly recall that the composition $\pt^1_{j+1}$ in \eqref{kcndjvbjkbkvbdjvndkcnekl} is linear for all $j$. Hence the composition $\widetilde\pt$ in \eqref{rfh794fh97hf983hf93h} will be linear. We can now make an analogous comparison as in \eqref{rfuhf98hf98h3f30j0} and \eqref{rcnibviubiuvnjneoe}. If $w\in H^0\big(X, \Ac_{T^*_{X, -}, 2\ell+1}\big)$ and $\lam\in \Cbb^\times$ then
\[
\widetilde\pt(\lam\cdot w) = \widetilde\pt (\lam^{2\ell}w) = \lam^{2\ell}\widetilde\pt (w).
\]
However, we also have:
\[
\widetilde\pt(\lam\cdot w)
=
\om_*\big(\widetilde\dt_{2\ell+2}(\lam\cdot w)\big)
=
\om_*\big(\lam\star \widetilde\dt_{2\ell+2}(w)\big)
=
\lam^{2\ell+2}\widetilde\pt(w).
\]
Hence $\widetilde\pt = 0$.\qed

\subsection{Proof of Theorem $\ref{dkcdnlnvdvbebvukenvl}$} 
In Proposition \ref{rgf78f79hf93h98fh39h93} we found that the linearisation of the lift $\widetilde\dt_{2\ell+1}$ of $\dt_{2\ell+1}$ vanished. Hence we will obtain a further lift of $\widetilde\dt_{2\ell+1}$, represented below by the dashed arrow:
\[
\xymatrix{
& &\mbox{\v H}^1\big(X, \mathrm G_{T^*_{X, -}, 2\ell+4}\big)\ar[d]
\\
\ar[drr]_{\dt_{2\ell+2}} \ar@{-->}[urr]^{\widetilde\dt_{2\ell+2}^\p} H^0\big(X, \Ac_{T^*_{X, -}, 2\ell+1}\big) \ar[rr]|{\widetilde\dt_{2\ell+2}} & &
\mbox{\v H}^1\big(X \mathrm G_{T^*_{X, -}, 2\ell+3}\big)\ar[d]
\\
& & \mbox{\v H}^1\big(X, \mathrm G_{T^*_{X,-}, 2\ell+2}\big)
}
\]
Arguing as in Proposition \ref{rgf78f79hf93h98fh39h93} the linearisation of $\widetilde\dt_{2\ell+2}^\p$ will vanish, leading therefore to a further lift. In this way we see that the map $\dt_{2\ell+2}: H^0\big(X, \Ac_{T^*_{X, -}, 2\ell+1}\big) \ra \mbox{\v H}^1\big(X, \mathrm G_{T^*_{X,-}, 2\ell+2}\big)$ will lift to $\widetilde\dt^{(k)} : H^0\big(X, \Ac_{T^*_{X, -}}\big) \ra  \mbox{\v H}^1\big(X, \mathrm G_{T^*_{X,-}, k}\big)$ for any $k> 2\ell+2$. Since $\mathrm G_{T^*_{X,-}, k} = (e)$ is trivial for sufficiently large $k$ we obtain the following commutative diagram
\[
\xymatrix{
& &\{e\}\ar[d]
\\
\ar[urr] H^0\big(X,  \mathrm G_{T^*_{X,-}, 2\ell+2}\big) \ar[rr]_{\dt_{2\ell+2}} && \mbox{\v H}^1\big(X, \mathrm G_{T^*_{X, -}, 2\ell+2}\big)
}
\]
which therefore shows that $\dt_{2\ell+2}$ is trivial, as required.
\qed

\bibliographystyle{alpha}
\bibliography{Bibliography}

\hfill
\\
\noindent
\small
\textsc{
Kowshik Bettadapura 
\\
\emph{Yau Mathematical Sciences Center} 
\\
Tsinghua University
\\
Beijing, 100084, China}
\\
\emph{E-mail address:} \href{mailto:kowshik@mail.tsinghua.edu.cn}{kowshik@mail.tsinghua.edu.cn}

\end{document}